\documentclass{amsart}

\usepackage{amsmath}
\usepackage{amssymb}
\usepackage{indentfirst}
\usepackage{galois}
\usepackage{amsthm}
\usepackage{amsfonts}
\usepackage{CJK}

\theoremstyle{proclaim}
\newtheorem{theorem}{Theorem}[section]
\newtheorem{lemma}[theorem]{Lemma}
\newtheorem{corollary}[theorem]{Corollary}
\newtheorem{proposition}[theorem]{Proposition}
\newtheorem{claim}[theorem]{Claim}
\theoremstyle{fancyproclaim}

\theoremstyle{definition}
\newtheorem{remark}[theorem]{Remark}
\newtheorem{definition}[theorem]{Definition}
\newtheorem{example}[theorem]{Example}
\theoremstyle{fancystatement}

\numberwithin{equation}{section}

\begin{document}

\title[Spectra of hyperbolic weighted composition semigroups]{Spectra of generators of hyperbolic composition and weighted composition semigroups}

\author{Yong-Xin Gao}
\address{Yong-Xin Gao\\
School of Mathematical Sciences and LPMC, Nankai University, Tianjin 300071, P.R. China.}
\email{ygao@nankai.edu.cn}

\author{Ze-Hua Zhou}
\address{Ze-Hua Zhou\\
School of Mathematics, Tianjin University, Tianjin 300354, P.R. China.}
\email{zhzhou@tju.edu.cn}

\keywords{weighted composition operators, semigroups, spectra}
\subjclass[2020]{Primary 47A10; Secondary 47B33, 47D03}

\date{}
\thanks{This work is supported by the National Natural Science Foundation of China (Grant Nos. 12571088, 12271396).}
\begin{abstract}
In this paper, we provide complete characterizations for the spectrum, essential spectrum, and point spectrum of the generators of weighted composition $C_0$-semigroups induced by hyperbolic semiflows on Bergman spaces. We give an explicit example showing that the spectral properties can be influenced by the behavior of the semigroup near non-fixed self-contact points.
\end{abstract}

\maketitle

\section{Introduction}\label{sec1}

A \emph{(one-parameter) semigroup} of bounded operators on a Banach space $X$ is a semigroup homomorphism $t\mapsto T_t$ from the additive semigroup $[0,+\infty)$ into the algebra of bounded operators on $X$, that is, $T_0 = Id_X$ and $T_{s+t} = T_sT_t$ for all $s,t\geq 0$. A semigroup $(T_t)_{t\geq0}$ is called \emph{strongly continuous} (or a $C_0$-semigroup) if the orbit map $t\mapsto T_tx$ is continuous from $[0,+\infty)$ into $X$ for each $x \in X$. 

For any $C_0$-semigroup $(T_t)$, its \emph{generator} $A : D(A) \subset X \to X$ is defined by
$$Ax=\lim_{t\to 0}\frac{1}{t}(T_tx-x)$$
for every $x \in D(A)$. The generator of a $C_0$-semigroup is always a closed and densely defined linear operator on $X$ (see Theorem 1.4 of Chapter \uppercase\expandafter{\romannumeral2} in \cite{equ}).

The spectra of the operators in a $C_0$-semigroup are closely related to the spectrum of the generator. In fact, one always has the inclusion
\begin{align}\label{SMT0}
\sigma(T_t) \supset  \mathrm{e}^{t\sigma(A)}
\end{align}
for any  $C_0$-semigroup $(T_t)$. For a general $C_0$-semigroup this inclusion is not always an equality. A detailed account of the spectral theory of $C_0$-semigroups and their generators can be found in \cite{equ}. 

In this present paper, we focus on $C_0$-semigroups of composition operators and weighted composition operators on Banach spaces of analytic functions on the unit disk $\mathbb{D} = \{z \in \mathbb{C} : |z| < 1\}$. Recall that a \emph{weighted composition operator} $uC_\varphi$ is defined as
$$uC_\varphi (f) =u  f \circ\varphi$$
for $f$ analytic on $\mathbb{D}$, where $u$ is an analytic function on $\mathbb{D}$ and $\varphi$ is an analytic self-map of $\mathbb{D}$. When $u$ is identically $1$ on $\mathbb{D}$, one obtains the \emph{composition operator} $C_\varphi$. The study of composition and weighted composition operators can be found in numerous references, including the early classical works \cite{COW, FOR, NOR, SHA}.

The spectra of single weighted composition operators on various types of Banach spaces of analytic functions has been extensively studied in numerous papers.  The most complete descriptions are usually obtained when the operator is compact or invertible., see \cite{sp4,sp1,sp2,sp3} for instance.  Yet it is not very clear when a weighted composition operator $uC_\varphi$ can be embedded into a $C_0$-semigroup. However, when such an embedding exists, the semigroup structure offers another perspectives for computing the spectrum of $uC_\varphi$. And in this context the spectrum of the generator of such a weighted composition semigroup plays a crucial role.

In the very recent papers \cite{aim,jfa}, the authors investigate the spectra of the generator of $(u_tC_{\varphi_t})_{t\in\mathbb{R}}$ when it forms a $C_0$-\emph{group}. Note that a semigroup $(u_tC_{\varphi_t})_{t\geq 0}$ extends to a group precisely when each operator $u_tC_{\varphi_t}$ is invertible for $t>0$, and this forces every $\varphi_t$ to be an automorphisms of the disk $\mathbb{D}$. The authors in \cite{aim} determine the spectrum of the generator of such a $C_0$-groups in the case that $(\varphi_t)$ is a flow of hyperbolic automorphisms. These result generalize known descriptions for the spectra of invertible weighted composition operators to wider classes of analytic function spaces.

Given these developments, it is natural and of significant interest to ask about the spectral properties of a general $C_0$-semigroup $(u_tC_{\varphi_t})_{t\geq 0}$ that does not extend to a group. This is exactly the main purpose of the present paper. In this setting, the $(\varphi_t)$ no longer consists of automorphisms, hence a finer analysis is evidently required.

This paper contains two main results. First, Theorem \ref{main} gives a complete description of the spectrum of the generator of a hyperbolic weighted composition semigroup under the broadest possible hypotheses. In addition, we also characterize the essential spectrum of such generators, which is presented in Theorem \ref{ess}.

As a special case, we obtain a full spectral characterization of the generators of (unweighted) composition semigroups. The eigenvalues of such generators have been studied in many literatures. In \cite{tezheng1, tezheng2} the author gives a complete description of the eigenvalues of the generator of a hyperbolic composition semigroup. However, a complete picture of the whole spectrum of the generator of a hyperbolic composition semigroup has been lacking, and our Corollary \ref{compositions} fills this gap. 

The paper is organized as follows. We first introduce the settings and fix our notations in Section \ref{sec2}.  Then in Section \ref{bu} we state our main results, provide explanatory remarks, and compare our results with those previously known in the literatures. Section \ref{sec4} discusses the properties of semiflows and semicocycles that induced the weighted composition semigroups. In Section \ref{sec3}, we give the spectral radii of the weighted composition operators in the semigroups. And Section \ref{sec5} determines the eigenvalues of the generators. Section \ref{sec6} contains complete proofs of our main results Theorem \ref{main} and Theorem \ref{ess}. Finally, in Section \ref{further} we give some further remarks and questions.

\section{Preliminaries}\label{sec2}

The notations and terminology introduced in this section will be used consistently throughout the paper.

The space being considered in this paper is the Bergman spaces. For a (bounded) domain $\Omega\subset\mathbb{C}$ and $p\geq 1$, the Bergman space $A^p(\Omega)$ consists of all analytic functions on $\Omega$ for which
$$\|f\|_p=\left(\int_\Omega|f(z)|^p\mathrm{d}V(z)\right)^{1/p}<\infty,$$
where $\mathrm{d}V$ denotes the Lebesgue area measure. For brevity we write $A^p$ for short of $A^p(\mathbb{D})$, the Bergman space on the unit disk. For a boundary point $\zeta\in\partial \mathbb{D}$, we denote by $A^p_\zeta$ the set of functions $f$  that belong to $A^p(U\cap\mathbb{D})$ for some neighborhood $U$ of $\zeta$.

If $(u_tC_{\varphi_t})_{t\geq0}$ is a  $C_0$-semigroup of bounded operators on $A^p$, necessarily $(\varphi_t)$ is a continuous semigroup (also called a \emph{semiflow}) of holomorphic self-maps on $\mathbb{D}$, that is,
\begin{enumerate}
\item $\varphi_0$ is the identity in $\mathbb{D}$;
\item $\varphi_{t+s} = \varphi_t \circ\varphi_s$ for all $t,s \geq 0$;
\item the map $t \mapsto \varphi_t(z)$ is continuous on $[0,+\infty)$ for every $z \in\mathbb{D}$.
\end{enumerate}
Correspondingly, $(u_t)$ must be a continuous  \emph{semicocycle} for  $(\varphi_t)$, i.e.,
\begin{enumerate}
\item $u_0 \equiv 1$ on $\mathbb{D}$;
\item $u_{t+s}=u_t  \cdot u_s\circ\varphi_t$ for all $t,s\geq0$;
\item the map $t \mapsto u_t(z)$ is continuous on $[0,+\infty)$ for every $z \in \mathbb{D}$.
\end{enumerate}

It is not hard to prove that all iterates $\varphi_t$ for $t>0$ share a common Denjoy-Wolff point in $\overline{\mathbb{D}}$ (see Theorem 8.3.1 in \cite{cs}), which is denoted by $\zeta_0$ throughout this paper. If $\zeta_0\in \partial \mathbb{D}$, then there exists $\alpha_0\geq0$ such that the angular derivative of $\varphi_t$ at $\zeta_0$ is
$$\varphi_t'(\zeta_0)= \mathrm{e}^{-\alpha_0t}.$$
The number $\alpha_0$ is called the \emph{spectral value} of  $(\varphi_t)$ at $\zeta_0$. A semiflow  $(\varphi_t)$ is called \emph{hyperbolic} if it has positive spectral value at a boundary Denjoy-Wolff point.

For a hyperbolic semiflow  $(\varphi_t)$, by Theorem 12.1.4 in \cite{cs}, all maps $\varphi_t$ for $t>0$ share the same set of fixed points. And at each such fixed point $\zeta$ (which necessarily lies on $\partial\mathbb{D}$) other than the Denjoy-Wolff point, there exists $\alpha<0$ such that 
$$\varphi_t'(\zeta)= \mathrm{e}^{-\alpha t}.$$
for all $t\geq0$. We call $\alpha$ the spectral value of $(\varphi_t)$ at $\zeta$.

Unfortunately, the spectral properties of a weighted composition semigroup $(u_tC_{\varphi_t})$ cannot always be determined by the behavior of  $(\varphi_t)$ and $(u_t)$ near the fixed points of $(\varphi_t)$, even when they exhibit excellent behavior near those points. We will present a concrete examples in Section \ref{further} to illustrate this fact, see Example \ref{yigeyue}. Nevertheless, we are therefore led to consider points with the following property.

\begin{definition}
A point $\zeta\in\partial D$ is called a \emph{self-contact point} of a hyperbolic semiflow $(\varphi_t)$ if there exists $t>0$ such that $\varphi_t(U\cap\mathbb{D})\cap U\ne\emptyset$ for any neighborhood $U$ of $\zeta$.
\end{definition}

We say that $(\varphi_t)$ is analytic at a self-contact point $\zeta$ if $\varphi_t$ is analytic at $\zeta$ for some $t>0$. In Section \ref{sec3} we will show that this condition actually implies that $\varphi_t$ is analytic at $\zeta$ for every $t\geq0$.

Throughout the paper, we work under the following hypotheses on the $C_0$-semigroup $(u_tC_{\varphi_t})$: we require that $(\varphi_t)$ is analytic at its self-contact points, and that the semicocycle $(u_t)$ is continuous at these points.

Note that under these assumptions, $(\varphi_t)$ has finitely many self-contact points, and each of them is actually a fixed point of  $(\varphi_t)$. Denote these points by $\{\zeta_j\}_{j=0}^m$, where $\zeta_0$ is the Denjoy-Wolff point. By the continuity of $(u_t)$ at these points we have
\begin{align*}
u_{t+s}(\zeta_j) &= \lim_{z\to\zeta_j}u_{t+s}(z)\\
&=\lim_{z\to\zeta_j}u_t(z)u_s(\varphi_t(z))\\
&=u_t(\zeta_j)u_s(\zeta_j)
\end{align*}
Note that $u_t(\zeta_j)=\lim_{k\to\infty} u_t((1-1/k)\zeta_j)$, which is measurable with respect to $t\in[0,+\infty)$. So the Cauchy equation theory, see Proposition 8.1.14 in \cite{cs} for instance, shows that there exists $\beta_j\in\mathbb{C}\cup\{-\infty\}$ such that 
\begin{align}\label{valuebeta}
u_t(\zeta_j)= \mathrm{e}^{-\beta_j t}
\end{align}
for $t>0$. 

For the rest of this paper, we consistently write 
\begin{align}\label{gamma}
\gamma_j=2\alpha_j/p+ \operatorname{Re}\beta_j,
\end{align}
where $\alpha_j$ is the spectral value of $(\varphi_t)$ at the fixed point $\zeta_j$, and $\beta_j$ is given as in \eqref{valuebeta}. For indices $j\geq m+1$ we simply set $\gamma_j=-\infty$. Thus by relabeling, we may always assume that the sequence $\{\gamma_j\}$ is decreasing for $j\geq 1$.

Finally, for any pair of numbers $a\leq b$ in $\mathbb{R}\cup\{\pm\infty\}$, we denote by $\lfloor a,b\rceil$ the vertical strip $\{w \in\mathbb{C} : a \leq  \operatorname{Re}w\leq b\}$, and by $i\lfloor a,b\rceil$ the horizontal strip $\{w \in\mathbb{C} : a \leq  \operatorname{Im}w\leq b\}$.

\section{Main results}\label{bu}

With the preparations in Section \ref{sec2}, we can now state the main results of the paper.

\begin{theorem}\label{main}
Let $(u_tC_{\varphi_t})$ be a $C_0$-semigroup on $A^p$, where $(\varphi_t)$ is a hyperbolic semiflow. Suppose $(\varphi_t)$ is analytic at its self-contact points, and $(u_t)$ is continuous at these points. Then the spectrum of the generator $A$ of $(u_tC_{\varphi_t})$ is given as follows:
\begin{enumerate}
\item If $\gamma_0 \geq \gamma_1$, then
$$\sigma(A) = \lfloor-\infty, \gamma_2\rceil\cup\lfloor\gamma_1,\gamma_0\rceil.$$

\item If $\gamma_2 < \gamma_0 < \gamma_1$, then
$$\sigma(A) = \lfloor-\infty,  \gamma_2\rceil\cup\lfloor\gamma_0 ,\gamma_1\rceil.$$

\item If $\gamma_0 \leq \gamma_2$, then
$$\sigma(A) = \lfloor-\infty, \gamma_1\rceil.$$
\end{enumerate}
Here the numbers $\gamma_j$ are defined in \eqref{gamma} and are decreasing for $j\geq 1$.
\end{theorem}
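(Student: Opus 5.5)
The proof splits into three parts: spectral radii of the operators $T_t=u_tC_{\varphi_t}$, the point spectrum of $A$ and of its adjoint, and a closing argument that fills in the strips.

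\emph{Step 1: growth bounds.} I would first show that the spectral radius of each $T_t$ is
$$r(T_t)=\mathrm{e}^{t\max_j\gamma_j}=\mathrm{e}^{t\max(\gamma_0,\gamma_1)}$$
(this is the content of Section \ref{sec3}). The upper bound comes from localizing $\|T_t^n\|$. Away from the self-contact points, iterates $\varphi_{nt}$ push every region uniformly toward $\zeta_0$ and the Carleson-type measure estimates decay. Near each fixed point $\zeta_j$, analyticity of $\varphi_t$ and continuity of $u_t$ give the local norm $\approx|u_t(\zeta_j)|\varphi_t'(\zeta_j)^{-2/p}=\mathrm{e}^{t\gamma_j}$. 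Then the spectral inclusion \eqref{SMT0} gives
$$\sigma(A)\subset\lfloor-\infty,\max(\gamma_0,\gamma_1)\rceil .$$
The same localization, applied to $T_t$ restricted to functions vanishing suitably near $\zeta_0$, gives a finer resolvent estimate. Concretely, I expect that for $\lambda$ off the claimed set one can invert $\lambda-A$ via the Laplace transform $\int_0^\infty \mathrm{e}^{-\lambda t}T_t\,dt$ on the $\zeta_0$-local piece or on the repelling-point local pieces, combined with an explicit inverse. Since $\lambda-A$ acts as a first-order differential operator $f\mapsto \lambda f - G f' - g f$, with $G$ the semiflow generator and $g$ the cocycle generator, its resolvent is an explicit integral along trajectories. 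This would show that $\lambda\notin\sigma(A)$ when $\gamma_2<\operatorname{Re}\lambda$ and $\operatorname{Re}\lambda$ lies outside $[\min(\gamma_0,\gamma_1),\max(\gamma_0,\gamma_1)]$. In case (3), only the upper bound $\gamma_1$ is needed.

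\emph{Step 2: eigenvalues.} Next, using Section \ref{sec5}, I would construct eigenfunctions of $A$ via the Koenigs map $h$ of the hyperbolic semiflow, for which $h\circ\varphi_t=\mathrm{e}^{-\alpha_0 t}h$ maps $\mathbb{D}$ into a strip. The candidates are $f=\mathrm{e}^{-\lambda h\cdot(\ldots)}\cdot U$, where $U$ solves the cocycle equation. Integrability of $f$ near $\zeta_0$ and near each repelling $\zeta_j$ yields a half-plane condition on $\operatorname{Re}\lambda$. Membership in $A^p$ should hold for $\operatorname{Re}\lambda$ strictly between $\gamma_0$ and $\gamma_1$ when $\gamma_0<\gamma_1$, and for $\operatorname{Re}\lambda<\gamma_2$-type ranges in general. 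Dually, point evaluations composed with backward orbits give eigenvectors of $A^*$ for $\operatorname{Re}\lambda$ in the other strip $(\gamma_1,\gamma_0)$ when $\gamma_0>\gamma_1$. Together with the region $\operatorname{Re}\lambda<\gamma_2$, where there is freedom from two repelling points, this places the open strips inside $\sigma_p(A)\cup\sigma_p(A^*)$. Closedness of the spectrum then gives the closed strips.

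\emph{Step 3: assembling the cases.} Finally I would compare the sets from Steps 1 and 2 in cases (1)–(3). In case (3), where $\gamma_0\le\gamma_2$, the intervals from the two repelling points overlap with the $\zeta_0$ contribution and fill $\lfloor-\infty,\gamma_1\rceil$. The main obstacle is the resolvent bound in Step 1 in the gaps, i.e.\ on $(\gamma_2,\min(\gamma_0,\gamma_1))$. There one must glue local inverses near different fixed points and control the contribution from non-fixed regions. I would first attempt an explicit solution of $(\lambda-A)f=g$ by integrating along trajectories from $\zeta_0$ or from the repelling points, choosing the direction according to the sign of $\operatorname{Re}\lambda-\gamma_j$, and bound it with the localized norm estimates.
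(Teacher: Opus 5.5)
Your overall architecture matches the paper's: an a priori bound from $r(T_t)$, the explicit trajectory solution \eqref{difsolu} of $(\lambda-A)F=f$, then a case comparison. However, Step~2, which is supposed to put the strips into $\sigma(A)$, assigns the two mechanisms to the wrong strips. The containment you aim for is true, but not for the reasons you give.

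Taking $f=0$ in \eqref{difsolu} gives $\ker(\lambda-A)\subset\mathbb{C}\,\mathrm{e}^{\lambda h}/v$. By Corollary~\ref{zhishu} with Propositions~\ref{4zc} and \ref{4zyg}, this function lies in $A^p$ near $\zeta_0$ only if $\operatorname{Re}\lambda\le\gamma_0$, and near $\zeta_j$, $j\ge1$, only if $\operatorname{Re}\lambda\ge\gamma_j$. Hence $A$ has no eigenvalues at all when $\gamma_0<\gamma_1$, and none with $\operatorname{Re}\lambda<\gamma_2$ (Theorem~\ref{eigenvalue}); the eigenfunctions you propose there do not exist. Conversely, for $\gamma_1<\operatorname{Re}\lambda<\gamma_0$ the operator $\lambda-A$ is surjective, so $A^*$ has no eigenvectors there. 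The natural dual vector $f\mapsto\int_{-\infty}^{\infty}\mathrm{e}^{-\lambda t}(T_tf)(z)\,\mathrm{d}t$ with $z\in\Delta_1$ equals $\mathrm{e}^{\lambda h(z)}v(z)^{-1}\int_{\zeta_1}^{\zeta_0}\omega_{\lambda,f}$. Its forward end needs $\operatorname{Re}\lambda>\gamma_0$ and its backward end needs $\operatorname{Re}\lambda<\gamma_1$, so it is available precisely in the strip where you placed $A$-eigenfunctions.

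So the spectrum in $\lfloor\gamma_0,\gamma_1\rceil$ and in $\lfloor-\infty,\gamma_2\rceil$ is residual, and what must be proved is that $\lambda-A$ is not surjective. Below $\gamma_2$ this comes from over-determination, not ``freedom.'' By Claim~\ref{<j}, at each $\zeta_j$ with $\operatorname{Re}\lambda<\gamma_j$ the constant in \eqref{difsolu} is forced to be $K=\int_0^{\zeta_j}\omega_{\lambda,f}$, because $\frac{\mathrm{e}^{\lambda h}}{v}\int_{\zeta_j}^{z}\omega_{\lambda,f}\in A^p_{\zeta_j}$ while $\mathrm{e}^{\lambda h}/v\notin A^p_{\zeta_j}$. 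Hence every $f$ in the range satisfies $\int_{\zeta_1}^{\zeta_2}\omega_{\lambda,f}=0$, which fails for polynomials. The strip $\lfloor\gamma_0,\gamma_1\rceil$ is handled the same way, using Claim~\ref{>0} at $\zeta_0$. No single orbit joins $\zeta_1$ to $\zeta_2$, so point evaluations along backward orbits do not by themselves produce the annihilating functional.

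Two further gaps remain.

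First, when $\gamma_0=\gamma_1>\gamma_2$ the set $\lfloor\gamma_1,\gamma_0\rceil$ is a line with empty interior, so ``closedness of the spectrum'' gives nothing. The paper needs a separate argument (Case~V). Suppose $\lambda$ on the line were in the resolvent set, and say $\mathrm{e}^{\lambda h}/v\notin A^p_{\zeta_0}$. Replacing $v$ by $\tilde v=v/(z-\zeta_1)^c$ pushes $\tilde\gamma_1$ into $(\gamma_2,\gamma_0)$, so $\lambda$ lies on the boundary line of $\lfloor\tilde\gamma_1,\gamma_0\rceil\subset\sigma(\tilde A)$. But $v$ and $\tilde v$ are comparable near $\zeta_0$, so with Claims~\ref{others} and \ref{>j}, $\lambda-\tilde A$ inherits both injectivity and surjectivity from $\lambda-A$, a contradiction. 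The case where membership fails at $\zeta_1$ is symmetric.

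Second, the resolvent in the gap $\gamma_2<\operatorname{Re}\lambda<\min\{\gamma_0,\gamma_1\}$, which you rightly flag as the main obstacle, is left open, and your proposed tool does not apply. The forward Laplace transform at $\zeta_0$ is only available for $\operatorname{Re}\lambda>\gamma_0$ (Lemma~\ref{jifenshoulian}). The constant is forced at $\zeta_1$, so the only candidate is $F=\frac{\mathrm{e}^{\lambda h}}{v}\int_z^{\zeta_1}\omega_{\lambda,f}$. The real work is showing $F\in A^p_{\zeta_0}$ even though this integral does not run along forward orbits. The paper does this in Claim~\ref{<0}. It regards $\zeta_0$ as the repelling fixed point of the inverse flow on the preimage under $h$ of the middle half of the strip $h(\Delta_1)$, with a compensating change of weight. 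It then controls the part of the integral transversal to the orbits with Lemma~\ref{lastlemma}. Your sketch contains nothing that plays this role.
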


\begin{remark}
It should be pointed out that we never require $(u_t)$ to be bounded. Indeed, if one is willing to impose the additional assumption that $u_t$ is bounded on $\mathbb{D}$ for all $t\geq0$, several arguments in this paper could be simplified considerably.
\end{remark}

The next theorem describes the essential spectrum of the generators.

\begin{theorem}\label{ess}
Let $(u_tC_{\varphi_t})$ be a $C_0$-semigroup on $A^p$, where $(\varphi_t)$ is a hyperbolic semiflow. Suppose $(\varphi_t)$ is analytic at its self-contact points, and $(u_t)$ is continuous at these points. If both $\gamma_0$ and $\gamma_1$ are not $-\infty$, then the essential spectrum of the generator $A$ of $(u_tC_{\varphi_t})$ is
$$\sigma_e(A) =\bigcup_{\gamma_j\ne-\infty}\{\lambda\in\mathbb{C} : \operatorname{Re}\lambda=\gamma_j\}.$$
\end{theorem}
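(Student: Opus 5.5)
The plan is to localize the generator $A$ near each fixed point $\zeta_j$. Near each such point I will compare $A$ with an explicit model operator whose essential spectrum is the vertical line $\{\operatorname{Re}\lambda=\gamma_j\}$. The remaining part of the disk should then contribute only a compact (or at least Fredholm-irrelevant) perturbation.

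\textbf{Step 1: the model operators.} At a fixed point $\zeta_j$ the semiflow is analytic, and by the conjugation results of Section \ref{sec4} it is locally conjugate to the linear model $w\mapsto \mathrm{e}^{-\alpha_j t}w$ (in half-plane coordinates centered at $\zeta_j$). After this conjugation the cocycle $u_t$ is asymptotically the constant $\mathrm{e}^{-\beta_j t}$. For the model weighted composition group $\mathrm{e}^{-\beta_j t}C_{\mathrm{e}^{-\alpha_j t}w}$ acting on a Bergman space over a sector or half-plane near the boundary point, I will use the Mellin transform. Along rays this transform diagonalizes the dilation group, in the form of multiplication by $\mathrm{e}^{(2\alpha_j/p+\operatorname{Re}\beta_j+is)t}$ up to the imaginary part. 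Consequently the model generator has spectrum, and also essential spectrum, equal to the full line $\operatorname{Re}\lambda=\gamma_j$.

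\textbf{Step 2: lower bound.} Let $\operatorname{Re}\lambda_0=\gamma_j$. I will construct a singular Weyl sequence $f_n$ supported, in norm, ever closer to $\zeta_j$: normalized functions of the form $(w-\zeta_j)^{\text{power}}$, cut off in Mellin variable, with $\|f_n\|_p=1$, $f_n\to0$ weakly, $f_n\in D(A)$, and $\|(A-\lambda_0)f_n\|\to0$. The error terms coming from $\varphi_t$ and $u_t$ deviating from the model are controlled by analyticity of $\varphi_t$ and continuity of $u_t$ at $\zeta_j$, since $f_n$ concentrates there. Applying the same construction to adjoints handles the case where $\lambda_0$ lies on a line that is only approximately an eigen-line for $A$, and shows that $A-\lambda_0$ fails to be Fredholm. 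This gives $\supset$.

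\textbf{Step 3: upper bound (expected main obstacle).} For $\operatorname{Re}\lambda\ne\gamma_j$ for all $j$, I must show that $A-\lambda$ is Fredholm. The plan is to build a regularizer by patching local resolvents with a partition of unity $\{\chi_j\}$. Near $\zeta_j$ I use the model resolvent, given by the Laplace integral $\int \mathrm{e}^{-\lambda t}\mathrm{e}^{-\beta_jt}C_{\varphi_t}\,dt$ (forward or backward in time according to the sign of $\operatorname{Re}\lambda-\gamma_j$). On the part away from all self-contact points, the key fact is that by definition orbits leave every compact region of the boundary and never return. This should make the Laplace transform of the semigroup restricted there converge for every $\lambda$, up to a compact error. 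The spectral radius estimates of Section \ref{sec3} give uniform control of $\|u_tC_{\varphi_t}\|$ on each piece. The commutators $[A,\chi_j]$ are compact, because $\chi_j$ can be chosen to vary along the flow direction with bounded derivative relative to the generator vector field, and the Bergman-space multiplication-by-derivative operators supported away from the boundary are compact. The hypothesis that $\gamma_0,\gamma_1\neq-\infty$ ensures that the unbounded-weight cases (where $\beta_j=-\infty$) do not break the Fredholm index computation. The index argument together with Theorem \ref{main} then identifies $\sigma_e(A)$ exactly with the union of the lines, since on each complementary strip $A-\lambda$ is Fredholm. The hardest part will be proving compactness of the gluing errors near non-fixed boundary points, given that $u_t$ may be unbounded. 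I would first attempt this through Carleson-measure estimates for $u_tC_{\varphi_t}$ restricted to sets $\{|z-\zeta|<\delta\}$ with $\zeta$ not a self-contact point.
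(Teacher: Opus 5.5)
Your proposal has a genuine gap: its decisive step, Fredholmness of $\lambda-A$ for $\operatorname{Re}\lambda\neq\gamma_j$, is not proved, and the mechanism you propose for it fails as stated.

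Multiplying by a smooth cutoff $\chi_j$ destroys analyticity. The patched regularizer therefore does not act on $A^p$ unless you pass through $L^p$ and the Bergman projection, which is unbounded for $p=1$. More seriously, the commutator of $A$ with multiplication by a real cutoff $\chi_j$ is multiplication by its derivative along the flow, $2\operatorname{Re}(G\,\partial_z\chi_j)$. Since $\chi_j$ equals $1$ near $\zeta_j$ and $0$ near $\zeta_0$ (regions of the form $\operatorname{Re}h<-R$ and $\operatorname{Re}h>R$), it must drop by $1$ within flow time $2R$ along every flow line joining them. In general these lines run along $\partial\mathbb{D}$; this already happens for hyperbolic automorphisms, where $h(\mathbb{D})$ is a strip. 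So the symbol does not tend to $0$ along those boundary arcs, and the commutator is not compact; a bounded flow derivative only gives boundedness. Whether the gluing errors are relatively compact is exactly the point you defer. Theorem~\ref{rad} cannot supply it: it bounds the spectral radius of the whole operator, not norms of localized pieces, and $u_t$ may be unbounded near $\partial\mathbb{D}$.

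There are further problems near the fixed points. Near a repelling $\zeta_j$, your backward Laplace integral exists only on the petal $\Delta_j$. If $Q_j$ or $q_j$ is strictly monotone, every neighbourhood of $\zeta_j$ contains points with no backward orbit, so in the coordinate $\mathrm{e}^{-\alpha_j h}$ the local domain is not dilation invariant and the half-plane model is not exact. The hypothesis $\gamma_0,\gamma_1\neq-\infty$ does not exclude $\beta_j=-\infty$ for $j\ge 2$, where there is no model at all.

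The lower bound is also only asserted. Comparing $A=G\frac{d}{dz}+g$ with a model generator needs $g$ to have a limit at $\zeta_j$, which does not follow from continuity of $u_t$ there. Proposition~\ref{4zyg} controls $|v|$ only up to factors $|\mathrm{e}^{h}|^{\pm\epsilon}$. And no sequence is constructed for lines with no nearby eigenvalues, such as $\operatorname{Re}\lambda=\gamma_0<\gamma_1$.

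The missing idea is that $A$ is a first-order operator, so no localization is needed. By \eqref{difsolu}, $\dim\ker(\lambda-A)\le1$, and solving $(\lambda-A)F=f$ reduces to choosing one constant $K$. Claims~\ref{>0}, \ref{>j}, \ref{<j} and \ref{<0} show that each fixed point $\zeta_j$ either forces $K=\int_0^{\zeta_j}\omega_{\lambda,f}$ or imposes no condition. Claim~\ref{others} disposes of all non-fixed boundary points at once, by reweighting $v$ with powers of $z-\zeta_j$ and applying the global estimate of Theorem~\ref{rad}. Hence for $\operatorname{Re}\lambda\ne\gamma_j$ the range of $\lambda-A$ is the common kernel of finitely many bounded functionals $f\mapsto\int_{\zeta_0}^{\zeta_j}\omega_{\lambda,f}$ (or $\int_{\zeta_1}^{\zeta_j}\omega_{\lambda,f}$), which is closed of finite codimension.

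On the line $\operatorname{Re}\lambda=\gamma_j$, the paper reweights again with $\tilde v=v/\prod_{k\ne j}(z-\zeta_k)^{c_k}$. This keeps the behaviour at $\zeta_j$ but moves the other $\gamma_k$, so that by Theorem~\ref{main} $\lambda$ is a non-isolated boundary point of $\sigma(\tilde A)$. Since $\operatorname{Ran}(\lambda-A)\subset\operatorname{Ran}(\lambda-\tilde A)$, Fredholmness of $\lambda-A$ would make $\lambda-\tilde A$ Fredholm, which is impossible at such a point. This replaces both your Weyl sequences and your gluing argument.
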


We highlight two important special cases of Theorem \ref{main} and Theorem \ref{ess}. First, if $(u_tC_{\varphi_t})$ forms a group, then each operator $u_tC_{\varphi_t}$ is invertible. By Theorem 3.6 in \cite{sp2},  every $\varphi_t$ is a hyperbolic automorphism, whence we have $\gamma_2=-\infty$. Consequently, Theorem \ref{main} gives
$$\sigma(A)=\Bigl\lfloor \min\{\gamma_0,\gamma_1\}, \max\{\gamma_0,\gamma_1\}\Bigr\rceil,$$
and Theorem \ref{main} shows that $\sigma_e(A)$ is the boundary of this strip. These are exactly the results for hyperbolic weighted compositon groups on Bergman spaces obtained in \cite{aim} (see Theorem 6.8 therein).

The second important situation arises by taking $u_t\equiv 1$ and this yields a characterization of the spectrum for hyperbolic composition semigroups. According to \cite{strongly}, each continuous semiflow is automatically strongly continuous on $A^p$, and therefore induces a $C_0$-semigroup of composition operators.

\begin{corollary}\label{compositions}
Let $(\varphi_t)$ be a hyperbolic semiflow of analytic self-maps of $\mathbb{D}$. Suppose $(\varphi_t)$ is analytic at its self-contact points. Then the spectrum of the generator $A$ of $C_0$-semigroup $(C_{\varphi_t})$ on $A^p$ is given as follows:
\begin{enumerate}
\item If  $\zeta_0$ is the only fixed point of $(\varphi_t)$, then 
$$\sigma(A)=\left\lfloor -\infty, \frac{2\alpha_0}{p}\right\rceil.$$
And the point spectrum of $A$ is exactly the interior of this half-plane.

\item If $(\varphi_t)$ has two fixed points, then 
$$\sigma(A)=\left\lfloor \frac{2\alpha_1}{p}, \frac{2\alpha_0}{p}\right\rceil.$$
And the point spectrum of $A$ is exactly the interior of this strip.

\item If $(\varphi_t)$ has more than two fixed points, then 
$$\sigma(A)=\left\lfloor -\infty, \frac{2\alpha_2}{p}\right\rceil\cup\left\lfloor \frac{2\alpha_1}{p}, \frac{2\alpha_0}{p}\right\rceil.$$
And the point spectrum of $A$ is exactly the interior of the strip $\lfloor \frac{2\alpha_1}{p}, \frac{2\alpha_0}{p}\rceil$.
\end{enumerate}

\end{corollary}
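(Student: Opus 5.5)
The plan is to specialize Theorem \ref{main} to the trivial semicocycle $u_t\equiv 1$ and then read off the three cases. By \cite{strongly}, $(C_{\varphi_t})$ is a $C_0$-semigroup on $A^p$. The constant semicocycle is trivially continuous at every boundary point, so the hypotheses of Theorem \ref{main} reduce to analyticity of $(\varphi_t)$ at its self-contact points. In \eqref{valuebeta} we have $u_t(\zeta_j)=1$, so $\beta_j=0$ and, by \eqref{gamma}, $\gamma_j=2\alpha_j/p$ for $0\le j\le m$ and $\gamma_j=-\infty$ for $j>m$. Since the semiflow is hyperbolic, $\alpha_0>0$, while $\alpha_j<0$ for $j\ge1$. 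Hence $\gamma_0>0>\gamma_1\ge\gamma_2\ge\cdots$, and we are always in case (1) of Theorem \ref{main}:
$$\sigma(A)=\lfloor-\infty,\gamma_2\rceil\cup\lfloor\gamma_1,\gamma_0\rceil .$$

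The one point needing care is identifying the points $\{\zeta_j\}_{j=0}^m$ of Section \ref{sec2} (the self-contact points) with the fixed points of $(\varphi_t)$ named in the corollary. The paper already notes that, under the analyticity hypothesis, every self-contact point is a fixed point. For the converse, let $\zeta$ be a boundary fixed point with finite angular derivative $e^{-\alpha t}$. Then $\varphi_t$ maps a nontangential approach region at $\zeta$ into itself near $\zeta$, since the nontangential limit of $\varphi_t$ at $\zeta$ is $\zeta$. So $\varphi_t(U\cap\mathbb{D})\cap U\neq\emptyset$ for every neighborhood $U$ of $\zeta$, i.e.\ $\zeta$ is a self-contact point. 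Thus $m+1$ equals the number of fixed points, and the relabeling makes $\alpha_1$ the largest of the negative spectral values.

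The three cases then follow, interpreting $\lfloor-\infty,-\infty\rceil=\emptyset$.
\begin{enumerate}
\item If $m=0$, then $\gamma_1=\gamma_2=-\infty$, so $\sigma(A)=\lfloor-\infty,2\alpha_0/p\rceil$.
\item If $m=1$, then $\gamma_2=-\infty$, so $\sigma(A)=\lfloor 2\alpha_1/p,2\alpha_0/p\rceil$.
\item If $m\ge2$, all three values are finite and the stated union follows.
\end{enumerate}

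For the point spectrum, I would invoke the eigenvalue description from Section \ref{sec5}, specialized to $u_t\equiv1$; equivalently, this is the known description in \cite{tezheng1,tezheng2}. The claim is that $\sigma_p(A)$ is the open strip $\{\gamma_1<\operatorname{Re}\lambda<\gamma_0\}$, which becomes the open half-plane $\{\operatorname{Re}\lambda<\gamma_0\}$ when $\gamma_1=-\infty$.

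The heuristic is as follows. Eigenfunctions of $A$ have the form $f=e^{\lambda h}$ for the Koenigs function $h$, normalized so that $h\circ\varphi_t=h-t$, which conjugates the semiflow to translation. Membership of $f$ in $A^p$ is governed by its growth at $\zeta_0$ and at $\zeta_1$. Near each $\zeta_j$ one has $|f|\approx|z-\zeta_j|^{-\operatorname{Re}\lambda/\alpha_j}$, which yields exactly the two inequalities above. The fixed points $\zeta_j$ with $j\ge2$ correspond to boundary "petals" lying in the part of $\mathbb{D}$ that is not covered by the strip on which $e^{\lambda h}$ blows up, so they impose no further constraint.

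The main obstacle is this last step. The spectral statement is a direct specialization, but the point-spectrum claim depends on the precise form of the Section \ref{sec5} result. In particular, one must check that the half-plane $\lfloor-\infty,\gamma_2\rceil$ contributes no eigenvalues in case (3). My first approach would be to show that for $\operatorname{Re}\lambda\le\gamma_2$ (with $\gamma_2<\gamma_1$), any candidate eigenfunction violates the $A^p$ condition at $\zeta_1$, and that $\lambda$ lies in $\sigma(A)$ only through the approximate point spectrum.
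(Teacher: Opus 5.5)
Your computation of $\sigma(A)$ is the paper's own argument: with $u_t\equiv1$ you get $\beta_j=0$ and $\gamma_j=2\alpha_j/p$, so $\gamma_0>0>\gamma_1$, and case (1) of Theorem \ref{main} gives all three formulas. For the point spectrum the paper also uses the criterion $\lambda\in\sigma_p(A)$ iff $\mathrm{e}^{\lambda h}\in A^p$, followed by the sharp local criterion of Corollary \ref{zhishu} (Remark \ref{2yue25}). Note that Theorem \ref{eigenvalue} by itself only identifies the closure and the interior. It is the ``if and only if'' in Corollary \ref{zhishu}, which is your comparison $|\mathrm{e}^{\lambda h}|\approx|z-\zeta_j|^{-\operatorname{Re}\lambda/\alpha_j}$, that excludes the boundary lines.

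Three points in your point-spectrum discussion need correcting, although none of them changes the final answer.

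First, your reason for ignoring $\zeta_j$ with $j\ge2$ is wrong. For $\operatorname{Re}\lambda<0$, $\mathrm{e}^{\lambda h}$ blows up at every repelling fixed point, because there $\operatorname{Re}h\to-\infty$ while $\operatorname{Im}h$ stays bounded. Corollary \ref{zhishu} therefore imposes at each $\zeta_j$ the genuine condition $\operatorname{Re}\lambda>2\alpha_j/p$. These conditions are redundant only because the labeling gives $\alpha_j\le\alpha_1$. Add to this that $\mathrm{e}^{\lambda h}$ is bounded on $\{|\operatorname{Re}h|\le M\}$, since $\Omega$ lies in a horizontal strip. This gives $\sigma_p(A)=\{2\alpha_1/p<\operatorname{Re}\lambda<2\alpha_0/p\}$ directly. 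So your ``main obstacle'' is not one: for $\operatorname{Re}\lambda\le\gamma_2\le\gamma_1$ the condition at $\zeta_1$ already fails.

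Second, drop the claim that such $\lambda$ lie in $\sigma(A)$ ``only through the approximate point spectrum.'' It is unnecessary, and it is false. Take $\operatorname{Re}\lambda<\gamma_2$ off the lines $\operatorname{Re}\lambda=\gamma_j$. Theorem \ref{ess} makes $\lambda-A$ Fredholm, and since it is injective it is bounded below. So $\lambda$ lies in the residual spectrum, not the approximate point spectrum.

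Third, with your normalization $h\circ\varphi_t=h-t$, the function $\mathrm{e}^{\lambda h}$ would be an eigenfunction for $-\lambda$. Your later formulas, like the paper, use $h\circ\varphi_t=h+t$.
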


The conclusions about the spectrum and the essential spectrum in Corollary \ref{compositions} follow directly from Theorem \ref{main} and Theorem \ref{ess}. The statements concerning the eigenvalues will be obtained from  \eqref{difsolu} together with Corollary \ref{zhishu}, see Remark \ref{2yue25} for specific instructions. We note that the eigenvalues of generators of hyperbolic composition semigroups have already been characterized under more general settings in \cite{tezheng1,tezheng2}. However,  a complete description of the full spectrum of such generators was lacking before Corollary \ref{compositions}.

The proofs of Theorem \ref{main} and Theorem \ref{ess} will be given in Section \ref{sec6}.

\section{Semiflows and semicocycles}\label{sec4}

In this section, we conduct a detailed investigation of the behavior of semiflows and semicocycles near the boundary, particularly in the vicinity of the fixed points of semiflows.

Since discussing the properties of semiflows directly on the disk $\mathbb{D}$ is sometimes inconvenient, we occasionally employ alternative models. For a hyperbolic semiflow $(\varphi_t)$, there exists a univalent map $h$ on $\mathbb{D}$ such that
$$h\circ\varphi_t(z)=h(z)+t$$
for all $z\in\mathbb{D}$ and $t\geq 0$. For convenience, we may assume $h(0) = 0$, then with this normalization $h$ is unique. Set $\Omega =h(\mathbb{D})$. Then the pair $(\Omega, h)$ is called a \emph{canonical model} for $(\varphi_t)$, and $h$ is referred to as the  \emph{Koenigs function} of  $(\varphi_t)$. According to Theorem 9.3.5 in \cite{cs}, the domain $\Omega$ is contained in a horizontal strip of width $\pi/\alpha_0$.

The following theorem shows that our requirement on the analyticity of $(\varphi_t)$ at its self-contact points, which is imposed in Section \ref{sec2},  is equivalent to the condition that $\varphi_t$ is analytic at the  self-contact points for all $t>0$. Moreover, one can see that this condition is intimately related to the properties of the Koenigs function $h$ near the fixed points of $(\varphi_t)$.

\begin{theorem}\label{ana}
Let $(\varphi_t)$ be a hyperbolic semiflow of analytic self-maps of $\mathbb{D}$. If $\varphi_t$ is analytic at a boundary self-contact point $\zeta$ for some $t > 0$, then $\varphi_t$ is analytic at  $\zeta$ for all $t > 0$. Furthermore, in this case the function $\mathrm{e}^{-\alpha h}$ is analytic at $\zeta$, where $\alpha$ is the spectral value of $(\varphi_t)$ at $\zeta$.
\end{theorem}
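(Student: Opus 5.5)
The plan is to reduce everything to a local linearization at $\zeta$ and to compare it with the function $F=\mathrm{e}^{-\alpha h}$. This function satisfies the functional equation
$$F\circ\varphi_t=\mathrm{e}^{-\alpha t}F\qquad (t\geq 0)$$
on $\mathbb{D}$, because $h\circ\varphi_t=h+t$. Once $F$ is shown to be analytic at $\zeta$ with $F'(\zeta)\neq 0$, the first assertion follows at once. Indeed, locally we then have $\varphi_s=F^{-1}\circ(\mathrm{e}^{-\alpha s}F)$ near $\zeta$, and this expresses every $\varphi_s$, $s>0$, as an analytic map at $\zeta$. So the core of the proof is the second assertion.

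\emph{Step 1: $\zeta$ is a repelling fixed point of the extension.} Fix $t_0>0$ with $\varphi_{t_0}$ analytic at $\zeta$. From the self-contact property, pick $z_n\to\zeta$ in $\mathbb{D}$ with $\varphi_t(z_n)\to\zeta$ for some $t>0$. Using this together with the continuity of the extension of $\varphi_{t_0}$, I would argue that $\zeta$ is a boundary fixed point of the semiflow. The paper asserts this in Section \ref{sec2}, and I would prove it here by showing the contact forces $\varphi_{t_0}(\zeta)=\zeta$, via the Koenigs model and the strip structure of $\Omega$. Then Theorem 12.1.4 of \cite{cs} gives $\varphi_{t_0}'(\zeta)=\lambda:=\mathrm{e}^{-\alpha t_0}$. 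Here $\lambda>1$ if $\alpha<0$. If $\zeta=\zeta_0$, then $0<\lambda<1$; this case is handled symmetrically, with the roles of $\varphi_{t_0}$ and its inverse exchanged. Since $\varphi_{t_0}$ maps $\mathbb{D}$ into $\mathbb{D}$, is analytic across $\zeta$, and has a real positive derivative there, its restriction to a small arc of $\partial\mathbb{D}$ at $\zeta$ has unimodular boundary values. By the Schwarz reflection principle, $\varphi_{t_0}$ then maps a small arc of $\partial\mathbb{D}$ into $\partial\mathbb{D}$. Consequently the local inverse $\psi=\varphi_{t_0}^{-1}$, defined on a disk $V$ about $\zeta$, maps $V\cap\mathbb{D}$ into $\mathbb{D}$ and satisfies $\psi^n\to\zeta$ uniformly on $V$.

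\emph{Step 2: linearize and compare.} By Koenigs' linearization theorem for the attracting fixed point $\zeta$ of $\psi$ (multiplier $1/\lambda\in(0,1)$), there is an analytic $\sigma$ near $\zeta$ with
$$\sigma(\zeta)=0,\qquad \sigma'(\zeta)=1,\qquad \sigma\circ\varphi_{t_0}=\lambda\sigma .$$
Set $G=F/\sigma$ on $V\cap\mathbb{D}$, shrinking $V$ so that $\sigma$ has no zeros there except at $\zeta$. Then $G\circ\varphi_{t_0}=G$, equivalently $G=G\circ\psi^n$ for all $n$. If $G(w)$ has a finite nonzero limit $c$ as $w\to\zeta$ inside $\mathbb{D}$, then $G\equiv c$. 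Hence $F=c\,\sigma$ on $V\cap\mathbb{D}$, so $F$ extends analytically to $V$ with $F'(\zeta)=c\neq0$, which is exactly what is needed.

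\emph{Main obstacle.} The hard step is proving that $G$ has a nonzero limit at $\zeta$, i.e.\ that $\mathrm{e}^{-\alpha h}$ behaves like a nonzero multiple of $z-\zeta$. I would attack this in the Koenigs model. Near a repelling fixed point $\zeta$, $h$ maps a neighborhood of $\zeta$ in $\mathbb{D}$ onto a left half-strip of $\Omega$ of width $\pi/|\alpha|$ (the petal). In this half-strip, $\mathrm{e}^{-\alpha h}$ is a conformal map onto a sector-like region near $0$. Because both $F$ and $\sigma$ conjugate $\varphi_{t_0}$ to multiplication by $\lambda$, the quotient $G$ is invariant under the dynamics. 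I would first establish boundedness of $G$ and $1/G$ on a fundamental domain $D_0=\{w: |\lambda|^{-1}r\le|\sigma(w)|\le r\}\cap\mathbb{D}$, using compactness away from $\zeta$, and then use invariance to extend the bounds to all of $V\cap\mathbb{D}$. I would then conclude that $G$ is constant, by a Liouville-type argument for the invariant function in the coordinate $\log\sigma$, which turns the invariance into a periodicity.
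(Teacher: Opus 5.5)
Your skeleton is the paper's: a Koenigs linearizer $\sigma$ for $\varphi_{t_0}$ (or its local inverse) at $\zeta$, the identification $\sigma=c\,\mathrm{e}^{-\alpha h}$, and then $\varphi_t=\sigma^{-1}(\mathrm{e}^{-\alpha t}\sigma)$. The paper settles the identification in one line, by ``uniqueness of the Koenigs function up to a multiplicative constant''. You rightly flag this as the crux, since $\mathrm{e}^{-\alpha h}$ is only known on $V\cap\mathbb{D}$, not on a full neighbourhood of $\zeta$.

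\textbf{The gap: the Liouville step cannot work.} In the coordinate $\tau=\log\sigma$, the image of $V\cap\mathbb{D}$ is a left half-strip of asymptotic width $\pi$, not a plane. On it, $\exp(2\pi i\tau/\log\lambda)$ is $\log\lambda$-periodic, bounded and bounded away from $0$, yet not constant. So two-sided bounds plus periodicity give no Liouville conclusion.

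More decisively, for every $k\in\mathbb{Z}$ the function $F_k=\mathrm{e}^{-\alpha h}\mathrm{e}^{2\pi i k h/t_0}$ also satisfies $F_k\circ\varphi_{t_0}=\lambda F_k$. Since $\Omega$ lies in a horizontal strip, $F_k/F$ is bounded above and below on $\mathbb{D}$. Hence each $G_k=F_k/\sigma$ is $\varphi_{t_0}$-invariant with the same two-sided bounds as $G$, and at most one of them is constant. Any argument using only the single map $\varphi_{t_0}$ and such bounds must therefore fail.

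\textbf{What is missing.} You need more input, for instance the whole semiflow together with the angular derivative at $\zeta$. One way to use it:
\begin{enumerate}
\item For $t\ge 0$, $H_t=(\sigma\circ\varphi_t)/\sigma$ is $\varphi_{t_0}$-invariant, because $\varphi_t$ commutes with $\varphi_{t_0}$.
\item By Julia--Carath\'eodory and $\sigma'(\zeta)\neq 0$, $H_t$ has the nontangential limit $\varphi_t'(\zeta)=\mathrm{e}^{-\alpha t}$ at $\zeta$.
\item Follow $\varphi_{t_0}$-orbits that tend to $\zeta$ along rays of the $\sigma$-coordinate: forward orbits if $\zeta=\zeta_0$, backward orbits inside the petal otherwise. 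This gives $H_t\equiv \mathrm{e}^{-\alpha t}$ on an open set, i.e.\ $\sigma\circ\varphi_t=\mathrm{e}^{-\alpha t}\sigma$.
\item Differentiate at $t=0$, using $\partial_t\varphi_t|_{t=0}=1/h'$. This gives $\sigma'/\sigma=-\alpha h'$, hence $\sigma=c\,\mathrm{e}^{-\alpha h}$.
\end{enumerate}

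\textbf{A secondary error in Step 1.} A self-map analytic at a boundary fixed point with positive derivative need not have unimodular values on a nearby arc. For example, $\varphi_t(z)=1-\mathrm{e}^{-t}(1-z)$ is a hyperbolic semiflow of entire maps that sends $\partial\mathbb{D}\setminus\{1\}$ into $\mathbb{D}$. At a repelling point the same happens whenever $Q_j$ or $q_j$ is strictly monotone near $-\infty$, a case Remark~\ref{remarkchang} explicitly allows: translation by $t$ then moves the boundary curve $x+iQ_j(x)$ into $\Omega$.

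Consequently the local inverse $\psi$ need not map $V\cap\mathbb{D}$ into $\mathbb{D}$. The relation $G=G\circ\psi^n$ is only available on the petal, where backward orbits stay in $\mathbb{D}$. That is enough after the identity theorem, but the argument has to be set up there. Likewise, $\overline{D_0}$ meets $\partial\mathbb{D}$ away from $\zeta$, so bounds for $\mathrm{e}^{-\alpha h}$ on $D_0$ do not follow from compactness.
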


\begin{proof}
Assume that $\varphi_{T}$ is analytic at $\zeta_0$ for some $T>0$, where $\zeta_0$ is the Denjoy-Wolff point of $(\varphi_t)$. Since $\varphi_T'(\zeta_0)<1$, there exists a neighborhood $U$ of $\zeta_0$ such that $\varphi_T(U) \subset U$. By Theorem 2.53 in \cite{Cowen}, there exists a holomorphic function $\Psi$ on $U$ such that 
$$\Psi\circ\varphi_T= \mathrm{e}^{-T\alpha_0}\cdot\Psi$$
on $U$. On the other side, we have
$$\mathrm{e}^{-\alpha_0h}\circ\varphi_T= \mathrm{e}^{-T\alpha_0}\cdot  \mathrm{e}^{-\alpha_0h}$$
on $\mathbb{D}$. Thus, the uniqueness of the Koenigs function up to a multiplicative constant implies that $\Psi=c\cdot\mathrm{e}^{-\alpha_0h}$ on $U\cap\mathbb{D}$ for some nonzero constant $c$. Therefore, the function $ \mathrm{e}^{-\alpha_0h}$ is analytic at $\zeta_0$.

Moreover, since $\Psi(\partial\mathbb{D}\cap U)$ is an analytic arc passing through $0$, and for any $t>0$,
$$\Psi\circ\varphi_t(\partial\mathbb{D}\cap U)= \mathrm{e}^{-\alpha_0t}\cdot\Psi(\partial\mathbb{D}\cap U)$$
near $\zeta_0,$ which is also an analytic arc passing through $0$. Thus $\Psi\circ\varphi_t\circ\Psi^{-1}$ is analytic at $0$ , which implies that $\varphi_t$ is analytic at $\zeta_0$ for any $t>0$.

Now let $\zeta\ne\zeta_0$ be another self-contact point of $(\varphi_t)$ and suppose $\varphi_T$ is analytic at $\zeta$ for some $T>0$. First note that $\zeta$ must be a fixed point of  $(\varphi_t)$. Since $\varphi'_T(\zeta)>1$, there exists a neighborhood $V$ of $\zeta$ where $\varphi^{-1}_T$ is analytic and $\varphi_T^{-1}(V)\subset V$. Then an identical argument shows that $ \mathrm{e}^{-\alpha h}$ is analytic at $\zeta$. And consequently, $\varphi_t$ is analytic at $\zeta$ for all $t>0$.
\end{proof}

\begin{remark}\label{remarkchang}
We give a few remarks regarding Theorem \ref{ana} and introduce several notations that will be used throughout the paper. First, following Definition 13.4.1 in \cite{cs}, the connected components of the interior of the maximal backward invariant set of $(\varphi_t)$ are called the  \emph{petals} of $(\varphi_t)$. Suppose that the self-contact points of $(\varphi_t)$ besides the Denjoy-Wolff point are $\zeta_1,\dots,\zeta_m$, and $(\varphi_t)$ is analytic at these fixed points, then by Propositions 13.4.10 and 13.4.12 in \cite{cs}, there are exactly $m$ petals, denoted throughout this paper by $\{\Delta_j\}_{j=1}^m$, such that $\zeta_j$ lies on the boundary of $\Delta_j$ for each $j = 1,2,\dots,m$.  Theorem  \ref{ana} then shows that the analyticity of $(\varphi_t)$ at $\zeta_j$ makes $\Delta_j$ conformal at $\zeta_j$ with respect to $\mathbb{D}$. In fact, by Theorem 13.5.5 in \cite{cs}, $h(\Delta_j)$ is a horizontal strip of width $\frac{\pi}{-\alpha_j}$, where $\alpha_j$ is the spectral value of $(\varphi_t)$ at $\zeta_j$. Therefore, $\mathrm{e}^{-\alpha_j h}$ gives a biholomorphic map from $\Delta_j$ onto a half-plane. This map is analytic at $\zeta_j$ and has non-zero derivative there.

Secondly,since $\mathrm{e}^{-\alpha_0 h}$ is analytic at the Denjoy-Wolff point $\zeta_0$, a sequence $\{z_n\}\subset\mathbb{D}$ converges to $\zeta_0$ if and only if $\operatorname{Re}h(z_n)\to+\infty$.  Now, for a sufficiently large $M>0$, set 
$$\Gamma_0=\Gamma_0(M)=\{w\in\Omega : \operatorname{Re}w\geq M\},$$  
and let $Q_0(x)$ and $q_0(x)$ be the supremum and infimum of the set $\{y : x+iy\in \Gamma_0\}$ respectively. Then the analyticity of $\mathrm{e}^{-\alpha_0h}$ at $\zeta_0$ shows that $Q_0(x)$, as well as $q_0(x)$, is either constant or strictly monotonic for $x$ large enough. 

Similarly, by noting that for a sequence $z_n\in\mathbb{D}$, each cluster point of this sequence is a self-contact point of $(\varphi_t)$ if and only if
$$\lim_{n\to\infty}\operatorname{Im}h(z_n)=-\infty,$$
we can take $M$ large enough such that $\{w\in\Omega : \operatorname{Re}w\leq -M\}$ has $m$ connected components, denoted by 
$$\Gamma_j=\Gamma_j(M)$$
for $j=1, 2, \dots, m$, with $\zeta_j$ lying on the boundary of each $h^{-1}(\Gamma_j)$. Define $Q_j(x)$ and $q_j(x)$ as the supremum and infimum of the set $\{y : x+iy\in \Gamma_j\}$ respectively, then one also finds that $Q_j(x)$ and $q_j(x)$ are either constants or strictly monotonic when $x$ is close to $-\infty$.  

Finally, the notation introduced above satisfies the following relations: $h(\Delta_j)$ is precisely the interior of the strip $i\lfloor q_j(-\infty),Q_j(-\infty)\rceil$, and consequently, for $j\geq 1$ we have $$Q_j(-\infty)-q_j(-\infty)=\frac{\pi}{-\alpha_j}.$$
See Theorem 13.5.5 and 13.5.6 in \cite{cs} for instantce.
\end{remark}

Theorem \ref{ana}  also gives the following useful corollary. Recall that 

\begin{corollary}\label{zhishu}
Let $(\varphi_t)$ be a hyperbolic semiflow and $h$ be its Koenigs function. Suppose $(\varphi_t)$ is analytic at a boundary self-contact point $\zeta$.
\begin{enumerate}
\item If  $\zeta=\zeta_0$ is the Denjoy-Wolff point, then $\mathrm{e}^{\mu h}\in A^p_{\zeta_0}$ if and only if $ \operatorname{Re}\mu<2\alpha_0/p$;

\item If $\zeta\ne\zeta_0$, then $ \mathrm{e}^{\mu h}\in A^p_{\zeta}$  if and only if $ \operatorname{Re}\mu>2\alpha/p$, where $\alpha$ is the spectral value of $(\varphi_t)$ at $\zeta$.
\end{enumerate}
\end{corollary}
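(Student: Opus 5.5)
We change variables via the local conformal coordinate supplied by Theorem \ref{ana} and Remark \ref{remarkchang}, which turns $\mathrm{e}^{\mu h}$ into a power of a local coordinate. The integrability question then reduces to an elementary one for powers on a half-disk or a sector.

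\emph{Case (1), $\zeta=\zeta_0$.} Put $\Psi=\mathrm{e}^{-\alpha_0 h}$. By Theorem \ref{ana}, $\Psi$ is analytic at $\zeta_0$. The relation $\Psi\circ\varphi_T=\mathrm{e}^{-\alpha_0T}\Psi$ gives $\Psi(\zeta_0)=0$. Differentiating at $\zeta_0$ with the angular derivative $\varphi_T'(\zeta_0)=\mathrm{e}^{-\alpha_0T}$ does not determine $\Psi'(\zeta_0)$, so we use the univalence of $h$ instead. $\Psi$ is injective on $U\cap\mathbb{D}$ because $h(\mathbb{D})$ lies in a horizontal strip of width $\pi/\alpha_0$, so $w\mapsto \mathrm{e}^{-\alpha_0 w}$ is injective there. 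It maps $U\cap\mathbb{D}$ into a sector at $0$ of opening at most $\pi$. An analytic function vanishing to order $k$ at a boundary point sends a half-disk onto a region of opening $k\pi$, so $k=1$ and $\Psi'(\zeta_0)\neq 0$. Consequently $\Psi$ is a conformal change of coordinates near $\zeta_0$, with Jacobian $|\Psi'|^2$ bounded above and below. It maps $U\cap\mathbb{D}$ onto a region $W$ that is bounded by an analytic arc through $0$ and is tangent to a half-plane $H$ at $0$.

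Now $\mathrm{e}^{\mu h}=\Psi^{-\mu/\alpha_0}$ for a fixed branch, and $|\Psi^{-\mu/\alpha_0}|=|\Psi|^{-\operatorname{Re}\mu/\alpha_0}\,\mathrm{e}^{(\operatorname{Im}\mu/\alpha_0)\arg\Psi}$. The argument of $\Psi$ is bounded, since it is $-\alpha_0\operatorname{Im}h$ and $h$ lies in a strip. Hence, after the change of variables,
\begin{align*}
\int_{U\cap\mathbb{D}}|\mathrm{e}^{\mu h}|^p\,\mathrm{d}V \asymp \int_{W}|w|^{-p\operatorname{Re}\mu/\alpha_0}\,\mathrm{d}V(w).
\end{align*}
$W$ contains a sector $\{|w|<r,\ |\arg w-\theta_0|<\pi/2-\epsilon\}$ and is contained in a disk. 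In polar coordinates the integral is finite exactly when $-p\operatorname{Re}\mu/\alpha_0+2>0$, that is, when $\operatorname{Re}\mu<2\alpha_0/p$. This proves (1).

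\emph{Case (2), $\zeta\neq\zeta_0$, with $\alpha<0$.} Here we use $\Psi=\mathrm{e}^{-\alpha h}$, which is analytic at $\zeta$ by Theorem \ref{ana}. Near $\zeta$ we have $\operatorname{Re}h\to-\infty$, so $|\Psi|=\mathrm{e}^{-\alpha\operatorname{Re}h}\to0$ since $-\alpha>0$; thus $\Psi(\zeta)=0$. By Remark \ref{remarkchang}, $\Psi$ maps the petal $\Delta_j$ biholomorphically onto a half-plane and has nonzero derivative at $\zeta$. The whole one-sided neighborhood $U\cap\mathbb{D}$ then corresponds to $h$-values in $\Gamma_j$, which lies in a strip of width $\pi/(-\alpha)$ up to a monotone boundary correction. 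So again $\Psi$ is a conformal coordinate near $\zeta$ with bounded $\arg\Psi$.

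The step requiring care is that $U\cap\mathbb{D}$ may be slightly larger than the petal. Since $\Psi'(\zeta)\neq0$, the image of $U\cap\mathbb{D}$ is a region bounded by an analytic arc through $0$. This region contains a sector of opening close to $\pi$ and lies in a disk, so the same comparison applies. Writing $\mathrm{e}^{\mu h}=\Psi^{-\mu/\alpha}$, integrability near $0$ of $|w|^{-p\operatorname{Re}\mu/\alpha}$ requires $-p\operatorname{Re}\mu/\alpha>-2$. Because $\alpha<0$, this is equivalent to $\operatorname{Re}\mu>2\alpha/p$.

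The main obstacle is justifying the nonvanishing derivative and the boundedness of $\arg\Psi$ on the full neighborhood $U\cap\mathbb{D}$, not merely on the petal. For that we would rely on the injectivity of $h$ together with the strip geometry recorded in Remark \ref{remarkchang}. The remaining computations are routine.
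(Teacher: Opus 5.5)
Your proposal is correct and follows essentially the same route as the paper: both use Theorem \ref{ana} to make $\mathrm{e}^{-\alpha h}$ a local coordinate comparable to $z-\zeta$, so that $|\mathrm{e}^{\mu h}|^p$ behaves like $|z-\zeta|^{-p\operatorname{Re}\mu/\alpha}$ up to a bounded factor, and then reduce to the integrability of a power at a boundary point. The only differences are that you justify $\Psi'(\zeta_0)\neq 0$ explicitly via univalence and the sector-opening argument, which the paper leaves implicit in ``equivalent to $z-\zeta$''; that you handle the $\operatorname{Im}\mu$ factor through $\arg\Psi$, which is in fact bounded on all of $\mathbb{D}$ because $h(\mathbb{D})$ lies in a horizontal strip, so the ``obstacle'' you flag there is immediate; and that you do the power integral in polar coordinates where the paper cites Lemma 7.3 of \cite{Cowen}.
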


\begin{proof}
Because $\mathrm{e}^{-\alpha h}$ is analytic at $\zeta$, it is equivalent to $z-\zeta$ near $\zeta$. Hence $ \mathrm{e}^{\mu h}\in A^p_{\zeta}$ if and only if $(z-\zeta)^{-\mu/\alpha}\in A^p_{\zeta}$. And then assertions (i) and (ii) follows directly from Lemma 7.3 in \cite{Cowen}.
\end{proof}

Another important tool for studying the properties of a semiflow is its infinitesimal generator. For a semiflow $(\varphi_t)$ on the disk, there exists a unique holomorphic function $G$ on $\mathbb{D}$ such that
$$\frac{\partial\varphi_t(z)}{\partial t}=G(\varphi_t(z)).$$
The function $G$ is called the \emph{infinitesimal generator} of $(\varphi_t)$. For a hyperbolic semiflow $(\varphi_t)$ one has $G=1/h'$, see Theorem 10.1.4 in \cite{cs}.

We now turn to the semicocycles. If $(u_tC_{\varphi_t})$ is a $C_0$-semigroup, the proof of Theorem 1 in \cite{Wolf} shows that the map $t \mapsto u_t(z)$ is differentiable on $[0,+\infty)$ for every $z \in \mathbb{D}$, and the function
\begin{align}\label{gene}
g(z) =\frac{\partial}{\partial t} u_t(z)\Big|_{t=0}
\end{align}
is holomorphic on $\mathbb{D}$. We call $g$ the \emph{generator} of $(u_t)$. Solving equation \eqref{gene} gives 
$$u_t(z)=\exp\left(\int_0^tg\circ\varphi_s(z)\mathrm{d}s\right).$$
If we set
\begin{align}\label{boundary}
v(z)=\exp\left(\int_0^z\frac{g(\xi)}{G(\xi)}\mathrm{d}\xi\right),
\end{align}
then a straightforward verification yields
$$u_t(z)=\frac{v\circ\varphi_t(z)}{v(z)}.$$
for all $z\in\mathbb{D}$ and $t\geq0$. The function $v$ is called the \emph{semicoboundary} associated with $(u_t)$.

For the remainder of this section, we derive estimates for the growth of the function $v$ near the self-contact points of $(\varphi_t)$. Keep in mind that the following notation and setting are adopted for the rest of the section: the self-contact points of $(\varphi_t)$ are denoted by $\{\zeta_j\}_{j=0}^m$ where $m\geq 0$, and $\zeta_0$ is the Denjoy-Wolff point; the semicocycle $(u_t)$ is continuous at these points, and the sequence $\{\gamma_j\}$ is defined as in \eqref{gamma}.

Note that we have never required the functions in the semicocycle $(u_t)$ to be bounded, therefore we begin with the following lemma.

\begin{lemma}\label{commombd}
Suppose $\beta_0\ne-\infty$. Then for any $T > 0$, there exist a neighborhood $U$ of $\zeta_0$ and a constant $C=C(T)>1$ such that $$1/C \leq |u_t(z)| \leq C$$ for all $z\in U\cap\mathbb{D}$ and $t \in[0,T]$. 
\end{lemma}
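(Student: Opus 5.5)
We need the existence of a neighborhood $U$ of $\zeta_0$ and a constant $C>1$ with $1/C\le|u_t(z)|\le C$ for all $z\in U\cap\mathbb{D}$ and $t\in[0,T]$, assuming $\beta_0\ne-\infty$. The plan combines the continuity of each $u_t$ at $\zeta_0$ with the semicocycle identity and a compactness argument in $t$.

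First, since $u_t(\zeta_0)=\mathrm{e}^{-\beta_0 t}$ with $\beta_0\in\mathbb{C}$, we have $|u_t(\zeta_0)|=\mathrm{e}^{-\operatorname{Re}\beta_0 t}$. This is bounded above and below by positive constants on $[0,2T]$. Next, for each fixed $s$, continuity of $u_s$ at $\zeta_0$ gives a neighborhood $U_s$ on which $|u_s|$ lies between, say, $\tfrac12|u_s(\zeta_0)|$ and $2|u_s(\zeta_0)|$. However, we cannot take a uniform neighborhood for all $s$ directly, and no continuity in $t$ uniform in $z$ is assumed. So I will instead bootstrap from a single time.

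Fix $s=T$. Use the identity $u_{T}=u_t\cdot u_{T-t}\circ\varphi_t$ for $t\in[0,T]$, together with $\varphi_t(U\cap\mathbb{D})$ staying near $\zeta_0$. Since $\varphi_t'(\zeta_0)\le1$ and $\varphi_t$ is analytic at $\zeta_0$, one can choose $U$ small enough, for example a horodisk-like region $h^{-1}(\{\operatorname{Re}w\geq M\})$, with $\varphi_t(U\cap\mathbb{D})\subset U\cap\mathbb{D}$ for all $t\ge0$. This is immediate in the canonical model, since $h\circ\varphi_t=h+t$ and $\Gamma_0(M)+t\subset\Gamma_0(M)$. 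Here $\Gamma_0(M)$ corresponds to a neighborhood of $\zeta_0$ by Remark \ref{remarkchang}.

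The key step is to bound $|u_t|$ on $h^{-1}(\Gamma_0(M))$ uniformly for $t\in[0,T]$. I would argue via the generator $g$: write
$$u_t(z)=\exp\Bigl(\int_0^t g(\varphi_s(z))\,ds\Bigr),$$
so it suffices to show that $\operatorname{Re} g$ is bounded on $h^{-1}(\Gamma_0(M))$. The route is $g=G\cdot v'/v$, equivalently $\log v\circ h^{-1}$ has $w$-derivative $g\circ h^{-1}$. Continuity of $u_1$ at $\zeta_0$ says that $\log v(h^{-1}(w+1))-\log v(h^{-1}(w))\to-\beta_0$ as $\operatorname{Re}w\to+\infty$ in $\Gamma_0$, with real parts handled via $|u_1|$. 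Then
$$\int_0^1 g\circ h^{-1}(w+s)\,ds=\log u_1(h^{-1}(w))$$
is bounded in real part. This controls only averages of $g$, not $g$ pointwise, so I would instead avoid $g$ and use the semicocycle identity directly. Write $t\in[0,T]$ as $t=k\delta+r$ and reduce to $t\in[0,\delta]$. For small times, use strong continuity: $u_tC_{\varphi_t}1=u_t\to1$ in $A^p$ as $t\to0$, and the norms $\|u_t\|_{A^p}$ are uniformly bounded on $[0,T]$. Point evaluation bounds only give growth like $(1-|z|)^{-2/p}$, which is not uniform up to the boundary. To upgrade this, combine the two estimates: by the semicocycle relation $u_{T}(z)=u_t(z)u_{T-t}(\varphi_t(z))$, pointwise bounds on $u_T$ near $\zeta_0$ (from continuity there) transfer to $u_t$ once $u_{T-t}$ is controlled at points $\varphi_t(z)$.

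The main obstacle is therefore uniformity in $t$. I would handle it by a Baire-category argument. For $n\in\mathbb{N}$, let $E_n$ be the set of $t\in[0,T]$ such that $1/n\le|u_t|\le n$ on $h^{-1}(\Gamma_0(n))$. Each $E_n$ is closed by pointwise continuity in $t$, and $\bigcup_n E_n=[0,T]$ by continuity of each $u_t$ at $\zeta_0$ and the nonvanishing value $u_t(\zeta_0)\ne0$. Baire's theorem gives an interval $[a,b]\subset E_N$. The cocycle identity $u_{t+s}=u_t\,u_s\circ\varphi_t$, together with the forward invariance $\varphi_t(h^{-1}(\Gamma_0))\subset h^{-1}(\Gamma_0)$, then propagates the bounds. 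Dividing, $u_{b-a'}$-type differences $u_s=u_{a+s}/(u_a\circ\ldots)$ give uniform bounds for $s\in[0,b-a]$. Finitely many compositions then cover $[0,T]$, producing the constant $C(T)$.
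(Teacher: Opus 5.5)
Your proposal is correct. Only its last paragraph is actually the proof; the detours through the generator $g$, strong continuity and point evaluations are dead ends and can be deleted. The Baire argument takes a genuinely different route from the paper's.

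The paper works in two stages. From continuity of the single function $u_T$ at $\zeta_0$ it gets $u_{nT}\in(H^\infty(U\cap\mathbb{D}))^{-1}$ for all $n$ on a forward-invariant $U\cap\mathbb{D}$. It then uses $u_t=\frac{u_{NT}}{u_{NT}\circ\varphi_t}\,(u_t\circ\varphi_{NT})$, with $N$ so large that $\varphi_{NT}$ carries $U\cap\mathbb{D}$ into the smaller neighborhood where $u_t$ is controlled. This shows that every $u_t$ is bounded and bounded below on one common set $U\cap\mathbb{D}$. Uniformity in $t\in[0,T]$ is then imported from Wolf's Lemma 2.1, applied to the bounded cocycles $(u_t)$ and $(1/u_t)$.

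Your sets $E_n$, in which the bound and the region $h^{-1}(\Gamma_0(n))$ deteriorate together, let Baire's theorem do both jobs at once. No common neighborhood is needed beforehand, the two-sided bound comes out in one stroke, and the argument is self-contained; in effect it is a local proof of the uniform-boundedness content of Wolf's lemma. The paper's route, in return, isolates the structural fact that all $u_t$ are invertible in $H^\infty$ of a fixed neighborhood, and delegates the category argument to a citable result.

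When you write the propagation step, state it as $u_s=u_{s+a}/(u_a\circ\varphi_s)$ for $s\in[0,b-a]$. Since $a,a+s\in E_N$ and $\varphi_s$ maps $h^{-1}(\Gamma_0(N))$ into itself, this gives $N^{-2}\le|u_s|\le N^2$ on that set. The other ordering, $u_s\circ\varphi_a=u_{a+s}/u_a$, would control $u_s$ only on $\varphi_a(h^{-1}(\Gamma_0(N)))$. That set in general contains no set of the form $U'\cap\mathbb{D}$, for example when $\varphi_a(\mathbb{D})$ is internally tangent to $\partial\mathbb{D}$ at $\zeta_0$.

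Writing $t=k\delta+r$ with $\delta=b-a$ then yields $C=N^{2(T/\delta+1)}$. Any neighborhood $U$ with $U\cap\mathbb{D}\subset h^{-1}(\Gamma_0(N))$ works, and one exists because $\operatorname{Re}h(z)\to+\infty$ as $z\to\zeta_0$.
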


\begin{proof}
Since $\beta_0\ne-\infty$, we have $u_T \in (H^\infty(U\cap\mathbb{D})) ^{-1}$ for some neighborhood  $U$ of $\zeta_0$. We may assume that $U\cap\mathbb{D}$ is invariant under $(\varphi_t)$. Then  $u_{nT} \in (H^\infty( U\cap\mathbb{D})) ^{-1}$ for every positive integer $n$.

Fix any $t \in (0,1]$. There also exists a neighborhood $V\subset U$ of $\zeta_0$ such that $V\cap\mathbb{D}$ is invariant under $(\varphi_t)$, and $u_t \in (H^\infty( V\cap\mathbb{D})) ^{-1}$. We may assume that all other fixed points of $(\varphi_t)$ other than $\zeta_0$ lie outside the closure of $U\cap\mathbb{D}$, then $\varphi_t(z)$ converges to $\zeta_0$ as $t \to+\infty$ uniformly on $U\cap\mathbb{D}$. Therefore, we can take a positive integer $N$ large enough so that $\varphi_{NT}(U) \subset V$. Then for any $z\in U\cap\mathbb{D}$,
$$u_t(z)=\frac{u_{NT}(z)}{u_{NT}(\varphi_t(z))}  u_t(\varphi_{NT}(z)),$$
which shows that $u_t \in  (H^\infty( U\cap\mathbb{D})) ^{-1}$ for every $t \in [0,T]$, and hence for every $t\geq 0$. In other words, both $(u_t )$ and $(1/u_t )$ are bounded cocycles for $(\varphi_t)$ on  $U\cap\mathbb{D}$. Then this lemma now follows from Lemma 2.1 in \cite{Wolf}.
\end{proof}

\begin{proposition}\label{4zc}
Suppose $\beta_0\ne-\infty$. Let $v$ be the semicoboundary defined as in \eqref{boundary}. Then for any $\epsilon>0$, there exists $M > 0$ and $C,c > 0$ such that
$$c\left| \mathrm{e}^{h(z)}\right|^{ \operatorname{Re}\beta_0-\epsilon}<|v(z)|<C\left| \mathrm{e}^{h(z)}\right|^{ \operatorname{Re}\beta_0+\epsilon}$$
for all $z \in \mathbb{D}$ with $ \operatorname{Re}h(z) \geq M$.
\end{proposition}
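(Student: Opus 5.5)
The plan is to compare $v$ with its values on a compact ``fundamental domain'' near $\zeta_0$, and to use Lemma \ref{commombd} together with the cocycle relation $u_t=v\circ\varphi_t/v$ and the value $u_t(\zeta_0)=\mathrm{e}^{-\beta_0 t}$ to control the growth along the flow.

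\textbf{Step 1 (fundamental domain).} Fix $\epsilon>0$. By Remark \ref{remarkchang}, a sequence tends to $\zeta_0$ if and only if $\operatorname{Re}h\to+\infty$. Choose $M_0$ so large that $h^{-1}(\Gamma_0(M_0))$ lies inside the neighborhood $U$ given by Lemma \ref{commombd} (with $T=1$). Choose it also so large that, by continuity of $u_1$ at $\zeta_0$,
$$\bigl|\log|u_1(w)|+\operatorname{Re}\beta_0\bigr|<\epsilon \qquad\text{whenever } \operatorname{Re}h(w)\ge M_0 .$$
Take $M=M_0+1$. Every $z$ with $\operatorname{Re}h(z)\ge M$ can be written as $z=\varphi_{n+s}(w)$ with $n\in\mathbb{N}$, $s\in[0,1)$ and $\operatorname{Re}h(w)=M_0$. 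This uses $h\circ\varphi_t=h+t$ and the fact that $\Gamma_0(M_0)$ is invariant under translation to the right, which holds for $M_0$ large by the monotonicity of $Q_0,q_0$.

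\textbf{Step 2 (base set is compact).} Bound $|v|$ above and below on the base set $K=h^{-1}(\{\operatorname{Re}w=M_0\}\cap\Omega)$. This set is not compact in $\mathbb{D}$: it may accumulate at $\zeta_0$'s neighbours only through the ends $q_0(M_0)$, $Q_0(M_0)$ of the vertical slice. I expect this to be the main obstacle.

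To handle it, I would shift once more. The slice $\{\operatorname{Re}w=M_0\}\cap\Omega$ is a bounded vertical segment inside the strip of width $\pi/\alpha_0$, so its preimage is relatively compact in $\mathbb{D}\cup\{\text{an arc near }\zeta_0\}$. Writing $v\circ\varphi_{1}=u_1 v$ and applying Lemma \ref{commombd} on $U$, I can replace $K$ by a compact subset of $\mathbb{D}$, namely the preimage of $\{M_0-1\le\operatorname{Re}w\le M_0\}$ truncated away from the boundary arcs. On that compact set $v$ is zero-free and continuous, so there $c_0\le|v|\le C_0$. The values of $|v|$ near the boundary arcs of the slice are then controlled by $|u_s|\in[1/C,C]$ for $s\in[0,1]$.

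\textbf{Step 3 (telescoping).} For $z=\varphi_{n+s}(w)$ the cocycle identity gives
$$v(z)=v(w)\,u_s(\varphi_n(w))\prod_{k=0}^{n-1}u_1(\varphi_k(w)).$$
By Step 1, each factor satisfies $|u_1(\varphi_k(w))|\in[\mathrm{e}^{-\operatorname{Re}\beta_0-\epsilon},\mathrm{e}^{-\operatorname{Re}\beta_0+\epsilon}]$. The factor $u_s$ is bounded above and below by Lemma \ref{commombd}, and $|v(w)|$ is bounded above and below by Step 2. Since $n+s=\operatorname{Re}h(z)-M_0$ and $|\mathrm{e}^{h(z)}|=\mathrm{e}^{\operatorname{Re}h(z)}$, the product is comparable to a power of $|\mathrm{e}^{h(z)}|$ with exponent within $\epsilon$ of $\operatorname{Re}\beta_0$ in absolute value. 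Absorbing the constants $\mathrm{e}^{\pm M_0(\operatorname{Re}\beta_0\pm\epsilon)}$, $C$, $C_0$, $c_0$ into $c$ and $C$ yields the two-sided bound of the proposition.

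In this final step I would carefully track the orientation of the quotient $v\circ\varphi_t/v$ against $u_t(\zeta_0)=\mathrm{e}^{-\beta_0t}$. This determines the sign with which $\operatorname{Re}\beta_0$ enters, and it is the point where the stated normalization must be matched.
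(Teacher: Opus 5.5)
Your telescoping in Step 3 is sound as far as it goes. The trouble is that every tool you use only compares values of $v$ at points on the same horizontal line $\{\operatorname{Im}h=\text{const}\}$ of $\Omega$: this is true of the identity $v\circ\varphi_s=u_s\,v$ and of the bounds $|u_s|\in[1/C,C]$ from Lemma \ref{commombd}. The proposition also needs control of $|v|$ across different heights at the same value of $\operatorname{Re}h$. Nothing in your plan provides this, and with only horizontal comparisons Steps 1 and 2 cannot both be carried out.

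Since $\Omega+t\subset\Omega$ for $t\ge 0$, $Q_0$ is nondecreasing and $q_0$ is nonincreasing.

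\begin{itemize}
\item Suppose $Q_0$ is strictly increasing. Then points with $Q_0(M_0)<\operatorname{Im}h(z)<Q_0(\operatorname{Re}h(z))$ are not of the form $\varphi_{n+s}(w)$ with $\operatorname{Re}h(w)=M_0$. Right-invariance of $\Gamma_0(M_0)$ gives forward reachability, not backward. So Step 1 fails for points approaching $\zeta_0$ along the upper boundary.
\item Suppose instead $Q_0$ is constant. Then Step 1 is fine but Step 2 fails. The points $M_0+iy$ with $y\uparrow Q_0$ have orbits $M_0+s+iy$ running parallel to the boundary line $\operatorname{Im}w=Q_0$, at distance $Q_0-y$ from it. So no flow shift brings them into a compact subset of $\mathbb{D}$, and your sentence ``the values of $|v|$ near the boundary arcs are then controlled by $|u_s|$'' has no content.
\end{itemize}

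Bounding $|v|$ above and below on the whole slice $K$ is itself a vertical estimate of exactly the kind the proposition asserts.

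The missing idea is the paper's inequality \eqref{3}. With $\tilde g=g\circ h^{-1}-\beta_0$, one shows that $\bigl|\operatorname{Re}\int_{x_0}^{x_0+iy_0}\tilde g\bigr|<2C+\frac{\pi}{\alpha_0}\epsilon$ for all $x_0\ge M$. The paper proves this in three moves:
\begin{itemize}
\item It integrates the uniform smallness $\bigl|\int_w^{w+1}\tilde g\bigr|<\epsilon$ over $y$ between $0$ and $y_0$, a range of length at most $\pi/\alpha_0$.
\item It uses Fubini to find $t_0\in[0,1)$ at which the vertical integral has small real part.
\item It returns to $x_0$ using the horizontal bound \eqref{2}.
\end{itemize}
An equivalent formulation: $A(w)=\int_0^1\log\tilde v(w+t)\,\mathrm{d}t$ satisfies $A'(w)=\int_w^{w+1}\tilde g$, so $A$ is $\epsilon$-Lipschitz on vertical segments, while $\operatorname{Re}\log\tilde v-\operatorname{Re}A$ is bounded by \eqref{2}. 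With this in hand no fundamental domain is needed: one integrates from $M$ to $x_0$ along the real axis and then vertically.

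As for the sign you flag, the paper consistently uses $u_t(\zeta_0)=\mathrm{e}^{\beta_0 t}$. This is the convention in the proof itself (where $\log u_1\to\beta_0$), in the definition of $\gamma_j$, in Theorem \ref{rad}, and in Example \ref{yigeyue}. So the minus sign in \eqref{valuebeta} is a misprint. With the consistent convention your factors lie in $[\mathrm{e}^{\operatorname{Re}\beta_0-\epsilon},\mathrm{e}^{\operatorname{Re}\beta_0+\epsilon}]$, and the telescoping gives exactly the stated exponents.
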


\begin{proof}
Define
$$\tilde{g}=g\circ h^{-1}-\beta_0$$
and
$$\tilde{v}(w) = v \circ h^{-1}(w) \cdot \mathrm{e}^{-\beta_0w} = \exp\int_0^w\tilde{g}(\eta) \mathrm{d}\eta$$
for $w\in\Omega$. Then it suffices to prove that for all $w\in\Omega$ with $\operatorname{Re}w\geq M$,
\begin{align}\label{0}
c \mathrm{e}^{-\epsilon \operatorname{Re}w}<|\tilde{v}(w)|<C \mathrm{e}^{\epsilon \operatorname{Re}w}.
\end{align}

Observe that
$$\log u_1\circ h^{-1}(w)=\beta_0+\int_w^{w+1}\tilde{g}(\eta) \mathrm{d}\eta,$$
which tends to $\beta_0$ as $ \operatorname{Re}w\to+\infty$. So for any $\epsilon>0$, we can take $M>0$ such that
\begin{align}\label{1}
\left|\int_w^{w+1}\tilde{g}(\eta) \mathrm{d}\eta\right|<\epsilon
\end{align}
when $ \operatorname{Re}w\geq M$. Moreover, by Lemma \ref{commombd} we may also assume that
\begin{align}\label{2}
-C< \operatorname{Re}\int_w^{w+t}\tilde{g}(\eta) \mathrm{d}\eta<C
\end{align}
for some $C>0$ whenever $\operatorname{Re}w\geq M$ and $t\in[0,1]$.

Fix an arbitrary $w_0=x_0+iy_0\in \Omega$ with $x_0\geq M$. From \eqref{1} we obtain
$$\left|\int\int_{[0,1]\times[0,y_0]} \operatorname{Im}\tilde{g}(x_0+t+iy) \mathrm{d}t \mathrm{d}y\right|<\frac{\pi}{\alpha_0}\epsilon.$$
By Fubini's Theorem, there exists $t_0\in[0,1)$ such that
$$\left|\int_0^{y_0} \operatorname{Im}\tilde{g}(x_0+t_0+iy) \mathrm{d}y\right|<\frac{\pi}{\alpha_0}\epsilon,$$
i.e.,
$$\left| \operatorname{Re}\int_{x_0+t_0}^{w_0+t_0}\tilde{g}(\eta) \mathrm{d}\eta\right|<\frac{\pi}{\alpha_0}\epsilon.$$
Consequently,
\begin{align}\label{3}
\left| \operatorname{Re}\int_{x_0}^{w_0}\tilde{g}(\eta) \mathrm{d}\eta\right|&=\left| \operatorname{Re}\left(\int_{x_0}^{x_0+t_0}+\int_{x_0+t_0}^{w_0+t_0}-\int_{w_0}^{w_0+t_0}\right)\tilde{g}(\eta) \mathrm{d}\eta\right|\nonumber\\
&<2C+\frac{\pi}{\alpha_0}\epsilon.
\end{align}

Write $x_0=M+k+r$ with a positive integer $k$ and $r\in[0,1)$. Then
$$ \operatorname{Re}\int_M^{w_0}\tilde{g}(\eta) \mathrm{d}\eta= \operatorname{Re}\left(\int_M^{M+k}+\int_{M+k}^{x_0}+\int_{x_0}^{w_0}\right)\tilde{g}(\eta) \mathrm{d}\eta.$$
Combining \eqref{1} - \eqref{3}, we have
$$\left| \operatorname{Re}\int_M^{w_0}\tilde{g}(\eta) \mathrm{d}\eta\right|<3C+(k+\frac{\pi}{\alpha_0})\epsilon.$$
This gives \eqref{0}.
\end{proof}

\begin{proposition}\label{4zyg}
Suppose $\gamma_j\ne-\infty$ for some $j\geq 1$.  Then for any $\epsilon>0$, there exist a neighborhood $U$ of $\zeta_j$ and constants $C,c>0$ such that
\begin{align}\label{4zhsz}
c\left| \mathrm{e}^{h(z)}\right|^{ \operatorname{Re}\beta_j-\epsilon}<|v(z)|<C\left| \mathrm{e}^{h(z)}\right|^{ \operatorname{Re}\beta_j+\epsilon}
\end{align}
for all $z\in U\cap \mathbb{D}$.
\end{proposition}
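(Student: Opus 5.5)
The proof will mirror that of Proposition \ref{4zc}, carried out in the strip model of the petal $\Delta_j$. By Remark \ref{remarkchang}, near $\zeta_j$ the image $h(U\cap\mathbb{D})$ is contained in the component $\Gamma_j=\Gamma_j(M)$, which lies in the horizontal strip $i\lfloor q_j(-\infty),Q_j(-\infty)\rceil$ up to a set that shrinks as $\operatorname{Re}w\to-\infty$. Moreover, $z\to\zeta_j$ corresponds to $\operatorname{Re}h(z)\to-\infty$ inside $\Gamma_j$. As before, set $\tilde g=g\circ h^{-1}-\beta_j$ and $\tilde v(w)=v\circ h^{-1}(w)\,\mathrm{e}^{-\beta_j w}=\tilde v(w_*)\exp\int_{w_*}^w\tilde g(\eta)\,\mathrm{d}\eta$ for a fixed base point $w_*\in\Gamma_j$. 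It then suffices to show that $c\,\mathrm{e}^{\epsilon\operatorname{Re}w}<|\tilde v(w)|<C\mathrm{e}^{-\epsilon\operatorname{Re}w}$ for $w\in\Gamma_j$ with $\operatorname{Re}w\le -M$.

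\textbf{Step 1: a local analogue of Lemma \ref{commombd} at $\zeta_j$.} Since $\gamma_j\neq-\infty$, we have $\beta_j\ne-\infty$, so $u_T$ is bounded and bounded away from $0$ on some $V\cap\mathbb{D}$ with $V$ a neighborhood of $\zeta_j$. The fixed point $\zeta_j$ is repelling, so we work with the backward dynamics $\varphi_t^{-1}$ along $h^{-1}(\Gamma_j)$. In $h$-coordinates this is the shift $w\mapsto w-t$, which keeps $\Gamma_j$ invariant near $-\infty$ once $\Gamma_j$ is replaced by its part inside the petal strip; the translations are well defined there because $h(\Delta_j)$ is a full strip. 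The cocycle identity $u_t(z)=u_{NT}(z)u_t(\varphi_{NT}(z))/u_{NT}(\varphi_t(z))$, read backwards, transfers invertible boundedness from $u_T$ to all $u_t$ with $t\in[0,T]$. Wolf's Lemma 2.1 is not needed; continuity of $t\mapsto u_t$ at $\zeta_j$ together with the uniform bound on $t\in[0,1]$ gives $|\operatorname{Re}\int_w^{w+t}\tilde g|<C$ for $t\in[0,1]$ and $\operatorname{Re}w\le -M$. Continuity of $u_1$ at $\zeta_j$ also gives $|\int_w^{w+1}\tilde g|<\epsilon$ for $\operatorname{Re}w\le -M$, because $\log u_1(h^{-1}(w))\to-\beta_j$. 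The normalization of $\beta_j$ in \eqref{valuebeta} should be rechecked here to get the sign of the shift right.

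\textbf{Step 2: horizontal and vertical control.} For the horizontal direction, write $x_0=-M-k-r$ and sum the unit-step estimates. This gives $|\operatorname{Re}\int_{-M}^{x_0+iy}\tilde g|\le k\epsilon+C'$, as long as the horizontal path stays inside $\Gamma_j$. For the vertical direction, integrate the unit-step estimate over the vertical segment from a reference height $y_*$ to $y_0$ and apply Fubini exactly as in \eqref{3}. The segment has length at most the width of the strip, $\pi/(-\alpha_j)+o(1)$, so this yields $|\operatorname{Re}\int_{x_0}^{w_0}\tilde g|<2C+\frac{\pi}{-\alpha_j}\epsilon$.

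\textbf{Main obstacle.} The difficulty is geometric: $\Gamma_j$ need not be exactly a strip near $-\infty$, so the vertical segments and horizontal lines used above must lie in $\Gamma_j$. I would first pass to the petal strip $h(\Delta_j)$, where all segments are available. Then I would use the strict monotonicity or constancy of $Q_j,q_j$ from Remark \ref{remarkchang} to cover the thin boundary part of $\Gamma_j\setminus h(\Delta_j)$. On that part the curves $Q_j,q_j$ approach their limits monotonically, so if $Q_j(x)\downarrow Q_j(-\infty)$ one can still join a point vertically to the strip within $\Gamma_j$ over a distance bounded by the width. Combining the two steps gives $|\tilde v(w)|$ between $c\,\mathrm{e}^{\epsilon\operatorname{Re}w}$ and $C\mathrm{e}^{-\epsilon\operatorname{Re}w}$. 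Since $|\mathrm{e}^{h}|=\mathrm{e}^{\operatorname{Re}w}$, this is \eqref{4zhsz} for $z\in U\cap\mathbb{D}$ with $U=h^{-1}(\{\operatorname{Re}w<-M\}\cap\Gamma_j)$ enlarged to a neighborhood. Here $\epsilon$ may be replaced by a fixed multiple of itself, and the exponents are $\operatorname{Re}\beta_j\mp\epsilon$ with the inequality directions reversed, which is consistent because $\operatorname{Re}w<0$.
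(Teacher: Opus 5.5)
There is a genuine gap in Step 1, and it bites exactly on the thin region $\Gamma_j\setminus h(\Delta_j)$ that you identify as the main obstacle. The vertical Fubini step (the analogue of \eqref{3}) uses $|\operatorname{Re}\int_{w_0}^{w_0+t_0}\tilde g|<C$ at the target point $w_0$ itself, with $t_0\in[0,1)$ dictated by Fubini. So you need $|u_t|$ bounded above and below at $w_0$, uniformly in $t\in[0,1]$.

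The cocycle identity you quote only shows that each single $u_t$ is bounded and bounded below on a fixed neighborhood. Since $N$ depends on $t$, so do the constants. Uniformity in $t$ is exactly what Wolf's Lemma 2.1 supplies in Lemma~\ref{commombd}, so declaring it unnecessary leaves the analogue of \eqref{2} unproved. Moreover, your Step 1 argument runs only where backward translation $w\mapsto w-t$ is available, i.e.\ inside the petal strip. For $w_0$ in the thin region above the strip (where $Q_j(x)\downarrow Q_j(-\infty)$), nothing in your proposal controls $|u_{t_0}(h^{-1}(w_0))|$. Joining $w_0$ vertically to the strip therefore does not close the estimate.

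The paper avoids this problem in two ways:
\begin{enumerate}
\item On $h^{-1}(\Gamma_j)\cap\Delta_j$ it applies Proposition~\ref{4zc} to the reversed flow $(\varphi_t^{-1})$ with cocycle $(u_t\circ\varphi_t^{-1})$ on the petal. Lemma~\ref{commombd} then supplies the uniformity there.
\item On the thin region it uses no vertical path at all. If $\operatorname{Re}h(z)=-M+r-k$, then $\varphi_k(z)$ lies in a fixed set $S$ with $\overline{S}\subset\mathbb{D}$: the preimage of a compact rectangle at real part about $-M$, just above the strip. There $|v|$ is bounded above and below. One writes $v(z)=v(\varphi_k(z))/\prod_{i=0}^{k-1}u_1(\varphi_i(z))$, and each factor lies between $\mathrm{e}^{\operatorname{Re}\beta_j-\epsilon}$ and $\mathrm{e}^{\operatorname{Re}\beta_j+\epsilon}$ by continuity of $u_1$ at $\zeta_j$ alone.
\end{enumerate}

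You can either adopt this, or prove the uniform bound on a full neighborhood of $\zeta_j$ by a Baire argument. The sets of $t\in[0,1]$ with $1/n\le|u_t|\le n$ on $B(\zeta_j,1/n)\cap\mathbb{D}$ are closed and cover $[0,1]$. For $0\le s\le 1$, the map $\varphi_s$ sends small neighborhoods of $\zeta_j$ into slightly larger ones, since $|\mathrm{e}^{-\alpha_jh}\circ\varphi_s|=\mathrm{e}^{-\alpha_js}|\mathrm{e}^{-\alpha_jh}|$ and $\mathrm{e}^{-\alpha_jh}$ is conformal at $\zeta_j$.

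Two smaller points. Your reading of the exponents is the right one, and it is what the paper's proof actually yields: lower bound with $\operatorname{Re}\beta_j+\epsilon$, upper bound with $\operatorname{Re}\beta_j-\epsilon$, since $|\mathrm{e}^{h}|\to0$ at $\zeta_j$. The convention actually in force is $u_t(\zeta_j)=\mathrm{e}^{\beta_jt}$, so with your $\tilde g$ the unit-step integral $\int_w^{w+1}\tilde g$ tends to $0$.
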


\begin{proof}
Choos $M > 0$ large enough so that $\{w \in \Omega :  \operatorname{Re}w\leq -M\}$ has $m$ connected components $\{\Gamma_j\}_{j=1}^m$, with each $\zeta_j$ lying on the boundary of $h^{-1}(\Gamma_j)$. We shall show that for sufficiently large $M$ the estimate \eqref{4zhsz} holds on $h^{-1}(\Gamma_j)$ whenever $\beta_j\ne-\infty$.

According to Proposition 13.4.2 in \cite{cs}, $(\varphi_t)$ acts as a group on each petal $\Delta_j$, and then $(u_t\circ \varphi_t^{-1})_{t\geq0}$ is a semicocycle for $(\varphi_t^{-1})_{t\geq 0}$ on $\Delta_j$. So if $\beta_j\ne-\infty$, by Proposition \ref{4zc} we have \eqref{4zhsz} holds for all $z \in h^{-1}(\Gamma_j) \cap\Delta_j$ provided $M$ is large enough. This settles the case when both $Q_j(x)$ and $q_j(x)$ are constants near $-\infty$.

Otherwise, if $Q_j(x)$ is strictly increasing on $x \in(-\infty, M]$, then by taking $M$ large enough we have
$$\mathrm{e}^{\beta_j-\epsilon}<|u_1(z)|<\mathrm{e}^{\beta_j+\epsilon}$$ 
for $z\in h^{-1}(\Gamma_j)$. Now let $S$ be the preimage under $h$ of the rectangle $\lfloor M,M +1\rceil\cap i\lfloor Q_j(-\infty), Q_j(M -1)\rceil$. Note that $S\subset\mathbb{D}$. Denote by $C$ and $c$ be the maximum and minimum of $|v|$ on $S$ respectively. Then for any $z\in h^{-1}(\Gamma_j)$ with $ \operatorname{Im}h(z)\geq  Q_j(-\infty)$, we can write $\operatorname{Re}h(z) = M +r-k$ where $r \in [0,1)$ and $k$ is a nonnegative integer. Then
$$\mathrm{e}^{( \operatorname{Re}\beta_j-\epsilon)k} <\left|\frac{v(\varphi_k(z))}{v(z)}\right| < \mathrm{e}^{( \operatorname{Re}\beta_j+\epsilon)k}.$$
Notice that $\varphi_k(z)\in S$. Therefore \eqref{4zhsz} holds for every $z \in h^{-1}(\Gamma_j)$ with $ \operatorname{Im}h(z) \geq Q_j(-\infty)$. The same for $z \in h^{-1}(\Gamma_j)$ that $ \operatorname{Im}h(z) \leq q_j(-\infty)$ when $q_j(x)$ is strictly decreasing.
\end{proof}

\section{Spectral radius}\label{sec3}

This short section is a computation of the spectral radii of the operators in the semigroup $(u_tC_{\varphi_t})$, and this gives an estimate for the spectral bound of its generator $A$.

\begin{theorem}\label{rad}
Let $(u_tC_{\varphi_t})$ be a semigroup of bounded operators on $A^p$, where $(\varphi_t)$ is a hyperbolic semiflow. Suppose that $(\varphi_t)$ is analytic at its self-contact points, and $(u_t)$ is continuous at these points. Then for any $t\geq0$ the spectral radius of $u_tC_{\varphi_t}$ satisfies
$$r(u_tC_{\varphi_t})\leq\max\bigl\{\mathrm{e}^{\gamma_j t} : j\geq0\bigr\}.$$
\end{theorem}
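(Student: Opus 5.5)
By the semigroup property and the spectral radius formula, $r(u_tC_{\varphi_t}) = \lim_{n\to\infty}\|u_{nt}C_{\varphi_{nt}}\|^{1/n}$. So it suffices to show the following: for every $\epsilon>0$ there is a constant $C_\epsilon$ with
$$\|u_sC_{\varphi_s}\|\le C_\epsilon\, \mathrm{e}^{(\gamma+\epsilon)s}\qquad\text{for all } s\geq 0,$$
where $\gamma=\max_j\gamma_j$. We would prove this growth bound by localizing the norm $\|u_sf\circ\varphi_s\|_p^p=\int_{\mathbb{D}}|u_s|^p|f\circ\varphi_s|^p\,\mathrm{d}V$. Fix small neighborhoods $U_j$ of the self-contact points $\zeta_j$, $j=0,\dots,m$, as in Remark \ref{remarkchang}: $h(U_0\cap\mathbb{D})=\Gamma_0$ and $h(U_j\cap\mathbb{D})=\Gamma_j$. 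Their complement $K$ in $\mathbb{D}$ is a region whose closure meets $\partial\mathbb{D}$ only at non-self-contact points. Hence for every $t>0$, $\varphi_t(K)$ stays away from the part of $\partial\mathbb{D}$ near those points.

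On each piece we change variables $w=\varphi_s(z)$. We write $u_s=v\circ\varphi_s/v$, and the Jacobian is $|\varphi_s'|^2$. In the Koenigs coordinate, $\varphi_s$ is the translation $w\mapsto w+s$, so $|\varphi_s'(z)|=|h'(z)|/|h'(\varphi_s(z))|$. By Theorem \ref{ana}, near $\zeta_j$ we have $|h'(z)|\asymp |z-\zeta_j|^{-1}\asymp |\mathrm{e}^{\alpha_j h(z)}|$. Thus on the piece near $\zeta_j$ we get $|\varphi_s'|\approx \mathrm{e}^{-\alpha_j s}$ up to bounded factors. By Propositions \ref{4zc} and \ref{4zyg}, $|u_s|\approx \mathrm{e}^{-(\operatorname{Re}\beta_j\pm\epsilon)s}$ there.

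\textbf{Piece near $\zeta_0$.} Since $\Gamma_0$ is forward invariant,
$$\int_{U_0}|u_s|^p|f\circ\varphi_s|^p\,\mathrm{d}V\lesssim \sup_{U_0}\bigl(|u_s|^p|\varphi_s'|^{-2}\bigr)\int_{\mathbb{D}}|f|^p\,\mathrm{d}V\lesssim \mathrm{e}^{(p\gamma_0+\epsilon)s}\|f\|_p^p.$$
Here we use $|u_s|^p\approx \mathrm{e}^{-p\operatorname{Re}\beta_0 s}$ and $|\varphi_s'|^{-2}\approx \mathrm{e}^{2\alpha_0 s}$; the signs are to be matched against the definition $\gamma_0=2\alpha_0/p+\operatorname{Re}\beta_0$ in \eqref{gamma}.

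\textbf{Pieces near $\zeta_j$, $j\geq1$.} Points of $\Gamma_j$ flow to the right and eventually leave $\Gamma_j$. We split the trajectory time: the part spent in $\Gamma_j$ contributes a factor about $\mathrm{e}^{p\gamma_j s}$, and after leaving, the trajectory enters $K$ and then $\Gamma_0$. Writing $u_s=u_{s_1}\cdot u_{s_2}\circ\varphi_{s_1}$ via the cocycle identity, we get a bound of the form $\mathrm{e}^{(p\gamma_j+\epsilon)s_1}\cdot C\cdot \mathrm{e}^{(p\gamma_0+\epsilon)s_2}$, which is at most $C\mathrm{e}^{(p\gamma+2\epsilon)s}$. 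The time spent in $K$ is uniformly bounded, because the real part of $h$ increases at unit speed and $h(K)$ lies in a vertical band of bounded width. On this bounded time interval we bound $u$ and $\varphi'$ by a constant using continuity and the strong continuity of the semigroup.

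The main obstacle will be making this decomposition rigorous without assuming that $u_t$ is bounded. Two specific issues arise. First, the estimates of Propositions \ref{4zc}--\ref{4zyg} control only $|v|$, not derivatives. Second, the change of variables requires univalence of $\varphi_s$ (which holds, since $h$ is univalent) and control of the Jacobian along the whole region, not just near the fixed points. We would handle this by reducing to a bounded-time step: a fixed operator $u_TC_{\varphi_T}$ is bounded by hypothesis, so it suffices to bound $\|u_{nT}C_{\varphi_{nT}}\|$ using the factorization
$$u_{nT}=\frac{v\circ\varphi_{nT}}{v}$$
together with the estimate
$$\frac{|v(w)|}{|v(z)|}\lesssim |\mathrm{e}^{h(w)-h(z)}|^{\,\cdot}$$
from those propositions. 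We then apply the above change of variables only on $U_0\cup\bigcup_j U_j$, and use boundedness of $u_TC_{\varphi_T}$ to absorb everything else into the constant.
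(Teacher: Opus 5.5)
\textbf{There is a genuine gap: the bounded-time passage through $K$ cannot be handled by pointwise bounds.}

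Your reduction to a bound $\|u_sC_{\varphi_s}\|\le C_\epsilon\mathrm{e}^{(\gamma+\epsilon)s}$ is sound. So is the change of variables on pieces where the orbit stays near one fixed point. The argument fails at the step you flagged yourself, and your proposed repair does not close it.

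For $z\in U_j$ you bound the weight $|u_s|^p|\varphi_s'|^{-2}=|\nu_s|^p$, where $\nu_s=u_s(\varphi_s')^{-2/p}$, factor by factor along the orbit. On the bounded time spent in $K$ you bound $u$ and $\varphi'$ by constants ``by continuity and strong continuity''. No such bounds exist in general:
\begin{itemize}
\item $u_\tau$ is only assumed continuous at the fixed points, and it need not be bounded on $K$. Boundedness or strong continuity of the operators controls norms, not suprema.
\item $|\varphi_\tau'|^{-1}$ is unbounded on $K$ even when $u\equiv1$. For instance, let $\Omega$ be the strip $\{0<\operatorname{Im}w<\pi\}$ with the half-line $(-\infty,0]+i\pi/2$ removed; this semiflow is analytic at its three fixed points. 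Then $h'$ vanishes at the boundary point $\eta$ corresponding to the slit tip. Hence $|\varphi_\tau'(w)|=|h'(w)|/|h'(\varphi_\tau(w))|\to0$ as $w\to\eta$. Orbits starting in $U_1\cup U_2$ at heights $\operatorname{Im}h$ close to $\pi/2$ pass arbitrarily close to $\eta$.
\end{itemize}
So the constant $C$ in your bound $\mathrm{e}^{(p\gamma_j+\epsilon)s_1}\cdot C\cdot\mathrm{e}^{(p\gamma_0+\epsilon)s_2}$ does not exist.

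Your last paragraph names the right resource, the norm of one bounded operator, but gives no mechanism for using it. The naive estimate $\|u_TC_{\varphi_T}\|\,\|u_{s-T}C_{\varphi_{s-T}}\|$ is circular. Also, because your split times depend on $z$, no single operator can be pulled out of the integral.

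A smaller point: Propositions \ref{4zc} and \ref{4zyg} do not give ratio bounds that are uniform in $z$, because their $\epsilon$-losses grow with $|\operatorname{Re}h|$. On $U_0$ they only yield $|v\circ\varphi_s/v|\lesssim\mathrm{e}^{(\operatorname{Re}\beta_0+\epsilon)s+2\epsilon\operatorname{Re}h(z)}$. The uniform bounds near the $\zeta_j$ should come instead from the cocycle identity and the continuity of $\nu_1$ at $\zeta_j$, where $|\nu_t(\zeta_j)|=\mathrm{e}^{\gamma_jt}$. This also settles your sign bookkeeping.

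\textbf{The missing idea is the one in the paper's proof, which never needs pointwise control in the middle region.} Besides $\nu_t$, use the operators $W_k=(\varphi_k')^{2/p}C_{\varphi_k}$. These are contractions on $A^p$ because $\varphi_k$ is univalent.

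Choose $N$ so that $|\nu_1|<\max_j\mathrm{e}^{\gamma_j}+\epsilon$ on $\{|\operatorname{Re}h|\ge N/2\}$, and write
$\|u_nC_{\varphi_n}f\|_p^p=\int_{\varphi_n(\mathbb{D})}|\nu_n\circ\varphi_n^{-1}|^p|f|^p\,\mathrm{d}V$.
Now partition according to the \emph{endpoint} $w$, into slabs $D_k$ of unit width in $\operatorname{Re}h$. For $w\in D_k$, the backward orbit $\varphi_j^{-1}(w)$ satisfies $|\operatorname{Re}h|\ge N/2$ except possibly for $k<j\le N+k$. This window is the same for the whole slab. Hence on $D_k$,
$|\nu_n\circ\varphi_n^{-1}|\le(\max_j\mathrm{e}^{\gamma_j}+\epsilon)^{n-N}|\nu_N\circ\varphi_{N+k}^{-1}|$.

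Keep the last factor inside the integral and undo the substitution. Using $|\nu_N|^p|\varphi_{N+k}'|^2=|u_N|^p|\varphi_k'\circ\varphi_N|^2$, this gives
\[
\int_{D_k\cap\varphi_n(\mathbb{D})}|\nu_n\circ\varphi_n^{-1}|^p|f|^p\,\mathrm{d}V\le\Bigl(\max_j\mathrm{e}^{\gamma_j}+\epsilon\Bigr)^{p(n-N)}\bigl\|u_NC_{\varphi_N}(W_kf)\bigr\|_p^p .
\]
Summing over at most $n$ slabs gives $\|u_nC_{\varphi_n}\|\le n^{1/p}(\max_j\mathrm{e}^{\gamma_j}+\epsilon)^{n-N}\|u_NC_{\varphi_N}\|$, which is enough for the spectral radius.

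Synchronizing the middle passage across each slab is exactly what lets a single bounded operator absorb the uncontrolled part. Only the boundedness of $u_NC_{\varphi_N}$ is used. No strong continuity is needed, and the statement does not assume it anyway.
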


\begin{proof}
Denote the self-contact points of $(\varphi_t)$ by $\{\zeta_j\}_{j=0}^m$.

First note that the operators 
$$W_n=(\varphi_n')^{\frac{2}{p}}C_{\varphi_n}$$ 
are of norms at most $1$ on $A^p$ for any integer $n\geq0$. Define
$$\nu_t=\frac{u_t}{ (\varphi_t')^{\frac{2}{p}}}.$$
Then $(\nu_t)$ is a semicocycle for $(\varphi_t)$ and satisfies $|\nu_t(\zeta_j)|= \mathrm{e}^{\gamma_j t}$. So given any $\epsilon>0$, we can take positive integer $N$ such that 
$$|\nu_1(z)|<\max_{j\geq0} \mathrm{e}^{\gamma_j}+\epsilon$$ 
whenever $|\operatorname{Re}h(z)|\geq N/2$.

Now for any fixed $n>N$, partition $\mathbb{D}$ into at most $n-N+1$ parts $\{D_k\}$ as 
$$D_0=\{z\in\mathbb{D} :  \operatorname{Re}h(z)\leq N/2+1\},$$
$$D_{n-N}=\{z\in\mathbb{D} :  \operatorname{Re}h(z)>n- N/2\},$$
and
$$D_k=\{z\in\mathbb{D} : N/2+k< \operatorname{Re}h(z)\leq N/2+k+1\}$$
for $1\leq k\leq n-N-1$. Then for any $f\in A^p$,
\begin{align*}
\|u_nC_{\varphi_n}f\|_p^p&=\int_{\mathbb{D}}|u_n(z)|^p  |f\circ\varphi_n(z)|^p \mathrm{d}V(z)\\
&=\int_{\mathbb{D}}|\nu_n(z)|^p  |f\circ\varphi_n(z)|^p  |\varphi_n'(z)|^2 \mathrm{d}V(z)\\
&=\int_{\varphi_n(\mathbb{D})}|\nu_n\circ\varphi_n^{-1}(z)|^p  |f(z)|^p \mathrm{d}V(z)\\
&=\sum_{k=0}^{n-N}\int_{D_k\cap\varphi_n(\mathbb{D})}|\nu_n\circ\varphi_n^{-1}(z)|^p  |f(z)|^p \mathrm{d}V(z).
\end{align*}
Note that if $z\in D_k\cap\varphi_n(\mathbb{D})$, then $|\operatorname{Re}h(\varphi_j^{-1}(z))|\geq N/2$ for all indices $j$ with $1\leq j\leq k$ or $N+k+1\leq j\leq n$. Consequently,
$$|\nu_n\circ\varphi_n^{-1}(z)|<\left(\max_{j\geq0} \mathrm{e}^{\gamma_j}+\epsilon\right)^{n-N} |\nu_N\circ\varphi^{-1}_{N+k}(z)|.$$
Therefore, 
\begin{align*}
&\int_{D_k\cap\varphi_n(\mathbb{D})}|\nu_n\circ\varphi_n^{-1}(z)|^p  |f(z)|^p \mathrm{d}V(z)\\
\leq&\left(\max_{j\geq0} \mathrm{e}^{\gamma_j}+\epsilon\right)^{p(n-N)} \int_{\varphi_n(\mathbb{D})}|\nu_N\circ\varphi_{N+k}^{-1}(z)|^p  |f(z)|^p \mathrm{d}V(z)\\
\leq&\left(\max_{j\geq0} \mathrm{e}^{\gamma_j}+\epsilon\right)^{p(n-N)} \int_{\mathbb{D}}|\nu_N(z)|^p  |f\circ\varphi_{N+k}(z)|^p  |\varphi'_{N+k}(z)|^2 \mathrm{d}V(z)\\
=&\left(\max_{j\geq0} \mathrm{e}^{\gamma_j}+\epsilon\right)^{p(n-N)} \int_{\mathbb{D}}|u_N(z)|^p  |f\circ\varphi_{N+k}(z)|^p  |\varphi'_k\circ\varphi_N(z)|^2 \mathrm{d}V(z)\\
=&\left(\max_{j\geq0} \mathrm{e}^{\gamma_j}+\epsilon\right)^{p(n-N)} \left\|u_NC_{\varphi_N}(W_kf)\right\|^p\\
\leq&\left(\max_{j\geq0} \mathrm{e}^{\gamma_j}+\epsilon\right)^{p(n-N)} \|u_NC_{\varphi_N}\|^p\|f\|_p^p.
\end{align*}
Summing over $k$ we obtain
$$\|u_nC_{\varphi_n}\|\leq n^{\frac{1}{p}} \left(\max_{j\geq0} \mathrm{e}^{\gamma_j}+\epsilon\right)^{n-N} \|u_NC_{\varphi_N}\|.$$
So by the arbitrariness of $\epsilon>0$, 
$$r(u_1C_{\varphi_1})=\lim_{n\to\infty}\|u_nC_{\varphi_n}\|^{1/n}\leq\max\bigl\{\mathrm{e}^{\gamma_j t} : j\geq0\bigr\}.$$

Finally, for any $t>0$, by consider the semigroup $(u_{st}C_{\varphi_{st}})_{s\geq0}$, we can know that the spectral radius of $u_tC_{\varphi_t}$ is less than or equals to  $$\max\bigl\{\mathrm{e}^{\gamma_j t} : j\geq0\bigr\}.$$
\end{proof}

Combining Theorem \ref{rad} with \eqref{SMT0}, we have the following corollary.

\begin{corollary}\label{spectralbound}
Let $(u_tC_{\varphi_t})$ be a $C_0$-semigroup on $A^p$, where $(\varphi_t)$ is a hyperbolic semiflow. Suppose $(\varphi_t)$ is analytic at its self-contact points, and $(u_t)$ is continuous at these points. Then the spectrum of its generator $A$ satisfies
$$\sigma(A)\subset\left\lfloor-\infty, \max_{j\geq 0}\gamma_j\right\rceil.$$
\end{corollary}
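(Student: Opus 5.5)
This follows by combining Theorem \ref{rad} with the inclusion \eqref{SMT0}. We may set $\gamma^*=\max_{j\geq0}\gamma_j$. If $\gamma^*=-\infty$, we read the right-hand side as $\lfloor-\infty,-\infty\rceil$, and the argument below shows that $\sigma(A)$ is empty. So assume $\gamma^*>-\infty$.

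Let $\lambda\in\sigma(A)$ and fix any $t>0$. By \eqref{SMT0}, $\mathrm{e}^{t\lambda}\in\sigma(u_tC_{\varphi_t})$. Hence
$$\mathrm{e}^{t\operatorname{Re}\lambda}=|\mathrm{e}^{t\lambda}|\leq r(u_tC_{\varphi_t}).$$
Theorem \ref{rad} applies to the operators $u_tC_{\varphi_t}$, since a $C_0$-semigroup is in particular a semigroup of bounded operators, and the hypotheses on $(\varphi_t)$ and $(u_t)$ are the same. It gives
$$r(u_tC_{\varphi_t})\leq\max_{j\geq0}\mathrm{e}^{\gamma_jt}=\mathrm{e}^{\gamma^*t}.$$
The last equality holds because $t>0$ and $x\mapsto\mathrm{e}^{xt}$ is increasing, with $\mathrm{e}^{-\infty\cdot t}=0$. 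Only finitely many $\gamma_j$ are finite, so the maximum is attained.

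Taking logarithms and dividing by $t$ gives $\operatorname{Re}\lambda\leq\gamma^*$, which means $\lambda\in\lfloor-\infty,\gamma^*\rceil$. In the degenerate case $\gamma^*=-\infty$ we get $0<\mathrm{e}^{t\operatorname{Re}\lambda}\leq 0$, a contradiction, so $\sigma(A)=\emptyset$.

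The main difficulty, namely the spectral radius bound, is already contained in Theorem \ref{rad}. The only point to check is that its hypotheses are met, and they are, because the hypotheses of the corollary coincide with them.
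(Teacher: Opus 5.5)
Your proposal is correct and is essentially the paper's own argument: the paper obtains this corollary in one line by combining the spectral radius bound of Theorem \ref{rad} with the inclusion \eqref{SMT0}. Your explicit treatment of the degenerate case $\max_{j\geq0}\gamma_j=-\infty$, where $\sigma(A)$ is empty, is a harmless extra detail.
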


\begin{remark}
Theorem \ref{main} actually shows that  $$r(u_tC_{\varphi_t})=\max\bigl\{\mathrm{e}^{\gamma_j t} : j\geq0\bigr\}.$$
\end{remark}

\section{Eigenvalues of generators and semigroups}\label{sec5}

Recall that $G$ and $g$ denote the  infinitesimal generator of $(\varphi_t)$ and $(u_t)$ respectively. A direct computation shows that the generator of $(u_tC_{\varphi_t})$ is given by
$$Af=G f'+gf=\frac{f'}{h'}+\frac{v'f}{vh'}$$
for $f\in D(A)$, where
$$D(A)=\{f\in A^p : G f'+gf\in A^p\}$$
is the domain of $A$. Consequently, for any $\lambda\in\mathbb{C}$, solving the equation $(\lambda-A)F=f$ yields
\begin{align}\label{difsolu}
F(z)=\frac{ \mathrm{e}^{\lambda h(z)}}{v(z)}\left(K-\int_0^z \mathrm{e}^{-\lambda h(\xi)}h'(\xi)v(\xi)f(\xi) \mathrm{d}\xi\right),
\end{align}
with an arbitrary constant $K\in\mathbb{C}$. We omit the detailed calculation here as it is quite straightforward.

The following well-known estimate for the boundary growth of functions in Bergman space will be useful:
\begin{align}\label{bergman}
|f(z)|\leq(1 - |z|^2)^{-\frac{2}{p}} \|f\|_p
\end{align}
for any $f\in A^p$. This estimate can be found in many references. See, for example, Section 2.4 of \cite{bergman}.

The next theorem gives characterization of the eigenvalues of the generator $A$.

\begin{theorem}\label{eigenvalue}
Let $(u_tC_{\varphi_t})$ be a $C_0$-semigroup on $A^p$, where $(\varphi_t)$ is a hyperbolic semiflow. Suppose $(\varphi_t)$ is analytic at its self-contact points, and $(u_t)$ is continuous at these points.
\begin{enumerate}
\item If $\gamma_1<\gamma_0$, then 
$$\overline{\sigma_p(A)}=\lfloor\gamma_1,\gamma_0\rceil,$$
and each interior point of this strip  (or half-plane) is an eigenvalue of $A$.

\item  If $\gamma_1>\gamma_0$, then $\sigma_p(A)=\emptyset$.
\end{enumerate}
\end{theorem}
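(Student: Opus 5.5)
The plan is to reduce everything to the explicit formula for eigenfunctions. Setting $f=0$ in \eqref{difsolu} shows that every solution of $(\lambda-A)F=0$ is a constant multiple of
$$F_\lambda=\frac{\mathrm{e}^{\lambda h}}{v},$$
where $v$ is the semicoboundary \eqref{boundary}. Conversely, if $F_\lambda\in A^p$, then $u_tC_{\varphi_t}F_\lambda=\mathrm{e}^{\lambda t}F_\lambda$, because $h\circ\varphi_t=h+t$ and $u_t=v\circ\varphi_t/v$. Hence $F_\lambda\in D(A)$ and $AF_\lambda=\lambda F_\lambda$. Therefore $\lambda\in\sigma_p(A)$ if and only if $F_\lambda\in A^p$. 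I will decide this membership by splitting $\mathbb{D}$, via $h$, into three pieces: the region $\{\operatorname{Re}h\geq M\}$ near $\zeta_0$, the regions $h^{-1}(\Gamma_j)$ near $\zeta_j$ for $j\geq1$ (Remark \ref{remarkchang}), and the middle part $\{|\operatorname{Re}h|\leq M\}$.

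\emph{Near $\zeta_0$.} Suppose $\beta_0\neq-\infty$. Proposition \ref{4zc} gives, on $\{\operatorname{Re}h\geq M\}$,
$$|F_\lambda|\asymp\bigl|\mathrm{e}^{h}\bigr|^{\operatorname{Re}\lambda-\operatorname{Re}\beta_0\mp\epsilon}.$$
By Corollary \ref{zhishu}(i), this is $p$-integrable near $\zeta_0$ when $\operatorname{Re}\lambda<\gamma_0$. It is not integrable when $\operatorname{Re}\lambda>\gamma_0$, by the lower bound and a small enough $\epsilon$.

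Now suppose $\beta_0=-\infty$, so that $u_t\to0$ at $\zeta_0$. Then $|v\circ\varphi_1|\leq\delta|v|$ near $\zeta_0$ for any $\delta>0$, after shrinking the neighbourhood. Iterating along the orbit shows that $|1/v|$ grows faster than $|\mathrm{e}^{\mu h}|$ for every real $\mu$. Hence $F_\lambda\notin A^p_{\zeta_0}$ for every $\lambda$, which is consistent with $\gamma_0=-\infty$.

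\emph{Near $\zeta_j$, $j\geq1$.} If $\gamma_j\neq-\infty$, Proposition \ref{4zyg} together with Corollary \ref{zhishu}(ii) shows that $F_\lambda\in A^p_{\zeta_j}$ when $\operatorname{Re}\lambda>\gamma_j$, and that $F_\lambda\notin A^p_{\zeta_j}$ when $\operatorname{Re}\lambda<\gamma_j$.

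If $\gamma_j=-\infty$, the same iteration runs backwards, since $\zeta_j$ is repelling. Here $|v(z)|\geq\delta^{-k}|v(\varphi_k(z))|$, so $1/v$ decays faster than any exponential in $\operatorname{Re}h\to-\infty$ on $h^{-1}(\Gamma_j)$, and $F_\lambda$ is integrable there for every $\lambda$.

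Since the $\gamma_j$ are decreasing for $j\geq1$, integrability at all the $\zeta_j$ with $j\geq 1$ holds once $\operatorname{Re}\lambda>\gamma_1$. It fails at $\zeta_1$ when $\operatorname{Re}\lambda<\gamma_1$ (if $\gamma_1$ is finite).

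\emph{The middle region.} This is the step I expect to be the main obstacle. The set $\{|\operatorname{Re}h|\leq M\}$ can accumulate at boundary points that are not self-contact points. There $|\mathrm{e}^{\lambda h}|$ is bounded, because $\Omega$ lies in a strip, but $1/v$ has no a priori control and $u_t$ need not be bounded. I would use the invariance identity $F_\lambda=\mathrm{e}^{-\lambda t}u_tF_\lambda\circ\varphi_t$ with $t=2M$, which moves the middle region into $\{\operatorname{Re}h\geq M\}$.

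First try: take $f\in A^p$ agreeing with $F_\lambda$ near $\zeta_0$, or the truncation $F_\lambda\chi$ handled through the change of variables used in the proof of Theorem \ref{rad}. Then bound $\int|u_t|^p|F_\lambda\circ\varphi_t|^p\,\mathrm{d}V$ over the middle region by the boundedness of $u_tC_{\varphi_t}$ on $A^p$. Concretely, I would write $F_\lambda$ restricted to $\{\operatorname{Re}h\geq M\}$ as a difference of a global $A^p$ function and a function bounded on the relevant image set, using Runge-type approximation in the strip model. I expect this part to require the most care, and it is the step I am least sure can be made to work as stated.

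\emph{Conclusion.} If $\gamma_1<\gamma_0$, every $\lambda$ with $\gamma_1<\operatorname{Re}\lambda<\gamma_0$ gives $F_\lambda\in A^p$, so $\lambda$ is an eigenvalue. Any $\lambda$ outside $\lfloor\gamma_1,\gamma_0\rceil$ fails at $\zeta_0$ or at $\zeta_1$. This gives $\overline{\sigma_p(A)}=\lfloor\gamma_1,\gamma_0\rceil$, which is a half-plane when $\gamma_1=-\infty$.

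If $\gamma_1>\gamma_0$, then $\gamma_1$ is finite. Integrability at $\zeta_1$ forces $\operatorname{Re}\lambda\geq\gamma_1$, while integrability at $\zeta_0$ forces $\operatorname{Re}\lambda\leq\gamma_0$, and no $\lambda$ satisfies both when $\gamma_0=-\infty$. The boundary lines $\operatorname{Re}\lambda=\gamma_1$ and $\operatorname{Re}\lambda=\gamma_0$ need a separate argument. On them I would combine the $\epsilon$-estimates with the fact that $\gamma_1>\gamma_0$ leaves room, since at least one of the two endpoint conditions is strictly violated. Hence $\sigma_p(A)=\emptyset$.
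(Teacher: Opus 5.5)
Your overall structure matches the paper's. You reduce to $F_\lambda=\mathrm{e}^{\lambda h}/v\in A^p$, localize with Proposition \ref{4zc}, Proposition \ref{4zyg} and Corollary \ref{zhishu}, and iterate the cocycle where $\beta_j=-\infty$. There are two gaps, one in the middle region and one at the points $\zeta_j$ with $\gamma_j=-\infty$.

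\textbf{The middle region.} The step you flag as the main obstacle is left open. It is exactly what is needed to show that points of the open strip $\gamma_1<\operatorname{Re}\lambda<\gamma_0$ are eigenvalues. Your first suggestion is circular. An $f\in A^p$ agreeing with $F_\lambda$ on an open set equals $F_\lambda$ on all of $\mathbb{D}$ by the identity theorem, so its existence is just the statement $F_\lambda\in A^p$. A truncation $F_\lambda\chi$ is not analytic, so boundedness of the operator on $A^p$ says nothing about it. The Runge-type splitting is unnecessary, because $F_\lambda$ itself is bounded on the image set:
\begin{itemize}
\item Assume $\beta_0\neq-\infty$; otherwise there are no eigenvalues.
\item Proposition \ref{4zc} bounds $|v|$ from below on $\{M\le\operatorname{Re}h\le 3M\}$.
\item $|\mathrm{e}^{\lambda h}|$ is bounded wherever $\operatorname{Re}h$ is bounded, since $\Omega$ lies in a horizontal strip.
\item Writing $F_\lambda=\mathrm{e}^{\lambda h}u_{2M}/(v\circ\varphi_{2M})$ therefore gives $|F_\lambda|\le C|u_{2M}|$ on $\{|\operatorname{Re}h|\le M\}$.
\end{itemize}
Your worry that $u_{2M}$ may be unbounded is then irrelevant. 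We have $u_{2M}=u_{2M}C_{\varphi_{2M}}1\in A^p$, because the operator is bounded and constants lie in $A^p$. This is precisely the paper's argument.

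\textbf{The points $\zeta_j$ with $\gamma_j=-\infty$.} This gap depends on the first. You pass from $|1/v(z)|\le\delta^k|1/v(\varphi_k(z))|$ to uniform super-exponential decay of $1/v$ on $h^{-1}(\Gamma_j)$. That requires $1/v$ to be bounded on the starting slab $h^{-1}(\Gamma_j\cap\lfloor -M-1,-M\rceil)$. This slab touches $\partial\mathbb{D}$ at non-fixed boundary points. There the only information available is the middle-region bound $|1/v|\le C|u_{2M}|$, which gives $L^p$ control but not boundedness.

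The paper instead iterates integrals rather than pointwise values. Set $U_k=h^{-1}(\Gamma_j\cap\lfloor -M-k-1,-M-k\rceil)$, so that $\varphi_1(U_{k+1})\subset U_k$. Use $F_\lambda=\mathrm{e}^{-\lambda}u_1\,F_\lambda\circ\varphi_1$, together with $|u_1|<\mathrm{e}^{-L}$ and $|\varphi_1'|>\mathrm{e}^{-\alpha_j/2}$ near $\zeta_j$. A change of variables then gives
\[
\int_{U_{k+1}}|F_\lambda|^p\,\mathrm{d}V\le \mathrm{e}^{-pL-p\operatorname{Re}\lambda+\alpha_j}\int_{U_k}|F_\lambda|^p\,\mathrm{d}V .
\]
This yields a convergent geometric series once $\int_{U_0}|F_\lambda|^p\,\mathrm{d}V<\infty$, which again comes from the middle-region estimate. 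Your single-orbit argument for $\beta_0=-\infty$ is fine when combined with \eqref{bergman}, as in the paper, since there no uniform bound on a slab is needed.
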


\begin{proof}

From \eqref{difsolu} we see that $\lambda$ is an eigenvalue of $A$ if and only if $\frac{ \mathrm{e}^{\lambda h(z)}}{v(z)}\in A^p$.

First assume that $\beta_0=-\infty$. We shall show that in this case the point spectrum of $A$ is empty. In fact, for any $L>0$, there exists $M>0$ such that $|u_1(z)|<\mathrm{e}^{-L}$ whenever $\operatorname{Re}h(z)\geq M$. Fix any point $z_0\in\mathbb{D}$ with $\operatorname{Re}h(z_0)\geq M$, and let 
$$C=\max\{|v(\varphi_t(z_0))| : t\in[0,1]\}.$$ 
For $t$ sufficiently large, write $t=k+r$ with a positive integer $k$ and $r\in[0,1)$, then
\begin{align}\label{daibu}
v(\varphi_t(z_0))< C\mathrm{e}^{-Lk}\leq C\mathrm{e}^{-L(t-1)}.
\end{align}
Since $\varphi_t(z_0)$ approaches $\zeta_0$ nontangentially as $t\to\infty$ (see Proposition 8.7.3 in \cite{cs}), we have
$$\lim_{t\to+\infty}(1-|\varphi_t(z_0)|)^{1/t}=\mathrm{e}^{-\alpha_0}.$$ 
Consequently, for any $\lambda\in\mathbb{C}$, we have 
\begin{align*}
\left|\frac{ \mathrm{e}^{\lambda h(\varphi_t(z_0))}}{v(\varphi_t(z_0))}\right|>&C' \mathrm{e}^{(\operatorname{Re}\lambda+L)t}\\
\geq&C'\left(\frac{1}{1-|\varphi_t(z_0)|}\right)^{\frac{\operatorname{Re}\lambda+L}{2\alpha_0}}
\end{align*}
when $t$ is large enough, where $C'$ is a constant independent of $t$. This contradicts the growth estimate \eqref{bergman} for functions in $A^p$ since $L$ can be taken arbitrarily large. Therefore, $\frac{ \mathrm{e}^{\lambda h(z)}}{v(z)}\notin A^p$ for any $\lambda\in\mathbb{C}$, and hence we have $\sigma(A)=\emptyset$.

Now suppose that $\beta_0\ne-\infty$. By Proposition \ref{4zc}, for any $M$ large enough, the function $v(z)$ is bounded below from zero on $\{z : M \leq  \operatorname{Re}h(z) \leq3M\}$. Therefore,
$$\frac{ \mathrm{e}^{\lambda h(z)}}{v\circ\varphi_{2M}(z)}$$
is bounded on $\{z : -M \leq \operatorname{Re}h(z)\leq M\}$. Then
$$\left|\frac{\mathrm{e}^{\lambda h(z)}}{v(z)}\right|^p=\left|\frac{ \mathrm{e}^{\lambda h(z)}}{v\circ\varphi_{2M}(z)}\right|^p\cdot|u_{2M}(z)|^p,$$
which is integrable on $\{z : -M \leq\operatorname{Re}h(z) \leq M\}$. Hence $\lambda$ is an eigenvalue of $A$ if and only if $\frac{ \mathrm{e}^{\lambda h(z)}}{v(z)}\in A^p_{\zeta_j}$ at every self-contact point $\zeta_j$ for $j\geq 0$.

By Corollary \ref{zhishu} and Proposition \ref{4zc}, $\frac{ \mathrm{e}^{\lambda h(z)}}{v(z)}\in A^p_{\zeta_0}$ if $ \operatorname{Re}\lambda<\gamma_0$, and $\frac{ \mathrm{e}^{\lambda h(z)}}{v(z)}\notin A^p_{\zeta_0}$ if $ \operatorname{Re}\lambda>\gamma_0$. For $j\geq 1$, if $\beta_j\ne-\infty$, Corollary \ref{zhishu} together with Proposition \ref{4zyg} also yields $\frac{ \mathrm{e}^{\lambda h(z)}}{v(z)}\in A^p_{\zeta_j}$ if $ \operatorname{Re}\lambda>\gamma_j$, and $\frac{ \mathrm{e}^{\lambda h(z)}}{v(z)}\notin A^p_{\zeta_j}$ if $ \operatorname{Re}\lambda<\gamma_j$. 

Thus, it only remains to consider the situation where $\beta_j=-\infty$ at some self-contact point $\zeta_j$ with $j\geq 1$. If we can show in this situation that $\frac{ \mathrm{e}^{\lambda h(z)}}{v(z)}\in A^p_{\zeta_j}$ for all $\lambda\in\mathbb{C}$, then the proof of the theorem will be completed. To verify this fact, fix $L>\alpha_j/p-\operatorname{Re}\lambda$, and take $M>0$ large enough so that $u_1(z)< \mathrm{e}^{-L}$ whenever $ \operatorname{Re}h(z)\leq-M$. Morever, since $\varphi_1$ is analytic at $\zeta_j$, we may assume that
$$|\varphi_1'(z)|> \mathrm{e}^{-\alpha_j/2}$$
when $ \operatorname{Re}h(z)\leq-M$. Define 
$$U_k=h^{-1}\left(\Omega\cap\lfloor-M-k-1,-M-k\rceil\right)$$
for $k\geq0$. Then $\left|\frac{ \mathrm{e}^{\lambda h(z)}}{v(z)}\right|^p$ is integrable on $U_0$. By noting that $\varphi_1(U_{k+1})\subset U_k$ for any $k\geq 0$, we have
\begin{align*}
\int_{U_{k+1}}\left|\frac{ \mathrm{e}^{\lambda h(z)}}{v(z)}\right|^p \mathrm{d}V(z)\leq \mathrm{e}^{-pL-p\mathrm{\lambda}+\alpha_j} \int_{U_k}\left|\frac{ \mathrm{e}^{\lambda h(z)}}{v(z)}\right|^p \mathrm{d}V(z).
\end{align*}
Consequently, 
$$\int_{\cup U_k}\left|\frac{ \mathrm{e}^{\lambda h(z)}}{v(z)}\right|^p \mathrm{d}V(z)=\sum_{k=0}^\infty\int_{U_k}\left|\frac{ \mathrm{e}^{\lambda h(z)}}{v(z)}\right|^p \mathrm{d}V(z)<\infty,$$
which means that $\frac{ \mathrm{e}^{\lambda h(z)}}{v(z)}\in A^p_{\zeta_j}$. This completes the proof.
\end{proof}

As a corollary, we can immediately get a description of the point spectra of the operators $u_tC_{\varphi_t}$ as follows.

\begin{remark}\label{2yue25}
By taking $v(z)\equiv1$ we obtain the composition $C_0$-semigroup $(C_{\varphi_t})$. Then $\lambda\in\mathbb{C}$ is an eigenvalue of its generator $A$ if and only if $\mathrm{e}^{\lambda h(z)}\in A^p$. Therefore, by Corollary \ref{zhishu}, we have
$$\sigma_p(A)=\{\lambda : \alpha_1<\operatorname{Re}\lambda<\alpha_0\},$$
where $\alpha_0$ is the spectral value of $(\varphi_t)$ at its Denjoy-Wolff point $\zeta_0$, and $\gamma_1$ is its maximal spectral value at the repelling fixed points. If $\zeta_0$ is the only fixed point of $(\varphi_t)$, we simply set $\gamma_1=-\infty.$ Note that this is a spectial case of a more general theorem due to \cite{tezheng2}, see Theorem 3 therein, which characterizes the eigenvalues of the generator of $(C_{\varphi_t})$ without requiring $(\varphi_t)$ to be analytic at its self-contact points.
\end{remark}

\section{Proof of Theorem \ref{main} and Theorem \ref{ess}}\label{sec6}

This entire section constitutes complete proofs of Theorem \ref{main} and Theorem \ref{ess}.  With Theorem \ref{eigenvalue} in hand, the key issue we need to address is when $\lambda-A$ fails to be surjective. Given equation \eqref{difsolu}, for any  $f\in A^p$ and $\lambda\in\mathbb{C}$, we set the $(1,0)$-form
$$\omega_{\lambda,f}= \mathrm{e}^{-\lambda h(\xi)}h'(\xi)v(\xi)f(\xi) \mathrm{d}\xi$$
on $\mathbb{D}$. Then, by \eqref{difsolu}, the operator $\lambda-A$ is surjective if and only if for any  $f\in A^p$, there exists a constant $K=K(f)$ such that
$$\frac{ \mathrm{e}^{\lambda h(z)}}{v(z)}\left(K-\int_0^z\omega_{\lambda,f}\right)\in A^p.$$
Therefore, we need to establish several estimates for integrals of $\omega_{\lambda,f}$.

We begin by considering the integral $\int_z^{\zeta_0}\omega_{\lambda,f}$, where the integration path is taken along the curve $t\mapsto\varphi_t(z)$ for $t\in[0,+\infty)$.

\begin{lemma}\label{jifenshoulian}
If $ \operatorname{Re}\lambda>\gamma_0$, then for each $f\in A^p$, the integral $\int_z^{\zeta_0}\omega_{\lambda,f}$ gives an analytic function with respect to $z\in\mathbb{D}$.
\end{lemma}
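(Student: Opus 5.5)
Pulling the integral back along the flow turns it into an integral in $t$, which we will show converges locally uniformly in $z$; the estimates needed for that come from Proposition \ref{4zc} and the pointwise bound \eqref{bergman}. Along the path $t\mapsto\varphi_t(z)$ we have $h(\varphi_t(z))=h(z)+t$, so $h'(\varphi_t(z))\frac{\partial}{\partial t}\varphi_t(z)=1$. Hence
$$\int_z^{\zeta_0}\omega_{\lambda,f}=\int_0^{\infty}\mathrm{e}^{-\lambda(h(z)+t)}v(\varphi_t(z))f(\varphi_t(z))\,\mathrm{d}t.$$
Each truncation $F_T(z)=\int_0^T(\cdots)\,\mathrm{d}t$ is analytic in $z$, since the integrand is jointly continuous and analytic in $z$. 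So it will suffice to show that $F_T$ converges locally uniformly on $\mathbb{D}$ as $T\to\infty$. The case $\beta_0=-\infty$ needs separate but easier handling: then $|v(\varphi_t(z))|$ decays faster than any exponential, exactly as in \eqref{daibu}, and the estimates below become trivial. We therefore assume $\beta_0\neq-\infty$.

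Fix a compact set $K\subset\mathbb{D}$. Choose $\epsilon>0$ with $\operatorname{Re}\lambda>\gamma_0+3\epsilon$, and let $M$ be the constant from Proposition \ref{4zc}. Since $\operatorname{Re}h$ is bounded on $K$, there is $t_0$ such that $\operatorname{Re}h(\varphi_t(z))\ge M$ for all $t\ge t_0$ and $z\in K$. For such $t$, Proposition \ref{4zc} gives
$$|v(\varphi_t(z))|\le C\mathrm{e}^{(\operatorname{Re}\beta_0+\epsilon)(\operatorname{Re}h(z)+t)}\le C_K\mathrm{e}^{(\operatorname{Re}\beta_0+\epsilon)t}.$$
For $f$ we use \eqref{bergman}: $|f(\varphi_t(z))|\le(1-|\varphi_t(z)|^2)^{-2/p}\|f\|_p$. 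The key geometric input is a lower bound $1-|\varphi_t(z)|\ge c_K\mathrm{e}^{-(\alpha_0+\epsilon p/2)t}$, uniform for $z\in K$. Once we have it,
$$|f(\varphi_t(z))|\le C'_K\mathrm{e}^{(2\alpha_0/p+\epsilon)t}\|f\|_p,$$
and the integrand is bounded by $C''_K\|f\|_p\,\mathrm{e}^{(\gamma_0+2\epsilon-\operatorname{Re}\lambda)t}$. This is integrable on $[t_0,\infty)$ uniformly in $z\in K$, so $F_T$ converges uniformly on $K$. By Weierstrass, the limit is analytic, and the same bound also gives the integral as a function of $f$ with norm control.

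The main obstacle is the uniform lower bound on $1-|\varphi_t(z)|$. I would get it from Theorem \ref{ana}. Since $\mathrm{e}^{-\alpha_0h}$ is analytic at $\zeta_0$ with nonzero derivative, we have $|\varphi_t(z)-\zeta_0|\asymp|\mathrm{e}^{-\alpha_0(h(z)+t)}|\asymp \mathrm{e}^{-\alpha_0 t}$ uniformly on $K$. Moreover, $\Psi=\mathrm{e}^{-\alpha_0h}$ maps $\partial\mathbb{D}$ near $\zeta_0$ to an analytic arc through $0$, and the points $\Psi(\varphi_t(z))=\mathrm{e}^{-\alpha_0t}\Psi(z)$ lie on finitely many rays from $0$ that are not tangent to that arc; this is the nontangential convergence from Proposition 8.7.3 in \cite{cs}. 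Hence $1-|\varphi_t(z)|\asymp|\varphi_t(z)-\zeta_0|$, uniformly on $K$ by compactness. Even without exact comparability, the cruder Julia-type bound $1-|\varphi_t(z)|\ge c\,\mathrm{e}^{-\alpha_0 t}(1-|z|)$ from the angular derivative $\varphi_t'(\zeta_0)=\mathrm{e}^{-\alpha_0t}$ would suffice.
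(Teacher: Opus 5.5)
Your proof is correct. Its estimates are the paper's: the pullback along the orbit, Proposition \ref{4zc} for $v$, and \eqref{bergman} combined with the exponential rate of the nontangentially converging orbit for $f$. Where you differ is in how analyticity is obtained.

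The paper first proves convergence for each fixed $z$. It then passes to the canonical model and shows that the integral of $(h^{-1})^*\omega_{\lambda,f}$ over the vertical segment $[x+iy_1,x+iy_2]$ tends to $0$ as $x\to+\infty$. This path-independence statement gives directly $\int_z^{\zeta_0}\omega_{\lambda,f}=\int_0^{\zeta_0}\omega_{\lambda,f}-\int_0^z\omega_{\lambda,f}$ with a $z$-independent constant, which is the form in which the lemma is used in Claim \ref{>0}.

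You instead apply Weierstrass to the analytic truncations $F_T$. This needs a dominating bound uniform on compacta rather than on horizontal half-strips, and it avoids the cross-cut geometry of $\Omega$ near $+\infty$. If you also want the primitive relation, it costs one line. For a primitive $P$ of $\omega_{\lambda,f}$ one has $F_T=P\circ\varphi_T-P$, so $P\circ\varphi_T$ converges locally uniformly to a holomorphic function invariant under every $\varphi_s$. That function is constant, since in the canonical model it is invariant under real translations.

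Two details should be corrected.

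First, the points $\Psi(\varphi_t(z))$, $z\in K$, do not lie on finitely many rays. They lie on the compact family of rays with arguments $-\alpha_0\operatorname{Im}h(z)$, $z\in K$. Each of these rays is nontangent by Proposition 8.7.3 in \cite{cs}, and compactness of $\operatorname{Im}h(K)$ keeps them uniformly away from the two tangent directions. That uniformity is what you actually need.

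Second, your fallback bound is misattributed. Julia's lemma with $\varphi_t'(\zeta_0)=\mathrm{e}^{-\alpha_0 t}$ gives
$$\frac{|\zeta_0-\varphi_t(z)|^2}{1-|\varphi_t(z)|^2}\leq \mathrm{e}^{-\alpha_0 t}\,\frac{|\zeta_0-z|^2}{1-|z|^2},$$
and since $|\zeta_0-\varphi_t(z)|\geq 1-|\varphi_t(z)|$, this bounds $1-|\varphi_t(z)|$ from above, not from below. A lower bound of the kind you want does hold, but it comes from Schwarz--Pick:
$$1-|\varphi_t(z)|\geq\tfrac12\bigl(1-|\varphi_t(0)|\bigr)(1-|z|).$$
Combine this with $(1-|\varphi_t(0)|)^{1/t}\to\mathrm{e}^{-\alpha_0}$ for the single orbit of $0$, which is the input behind \eqref{xiangtong1}. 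This is in fact the quickest way to make the $f$-estimate uniform on compacta, with no need for the $\Psi$ argument.
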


\begin{proof}
We first assume that $\beta_0\ne-\infty$. The proof then proceeds in two steps.

For the first step, we need to verify that the integral converges for each fixed point $z\in \mathbb{D}$. Note that along the curve $t\mapsto\varphi_t(z)$, we have
$$\omega_{\lambda,f}= \mathrm{e}^{-\lambda h(z) - \lambda t}v\bigl(\varphi_t(z)\bigr) f\bigl(\varphi_t(z)\bigr) \mathrm{d}t.$$
Thus, it suffices to check the convergence of
$$\int_0^{+\infty} \mathrm{e}^{-\lambda t}v\bigl(\varphi_t(z)\bigr) f\bigl( \varphi_t(z)\bigr) \mathrm{d}t$$ 
for each fixed $z\in\mathbb{D}$. 

Because $\varphi_t(z)$ approaches $\zeta_0$ non-tangentially as $t\to\infty$ (see Proposition 8.7.3 in \cite{cs}), we have
$$\lim_{t\to +\infty}(1-|\varphi_t(z)|)^{1/t}= \mathrm{e}^{-\alpha_0}.$$ 
Hence by the growth estimate \eqref{bergman}, for any $\epsilon>0$ we have
\begin{align}\label{xiangtong1}
|f\bigl(\varphi_t(z)\bigr)|<\left(\frac{1}{1-|\varphi_t(z)|}\right)^{2/p}\cdot\|f\|_p< \mathrm{e}^{\left(\frac{2\alpha_0}{p}+\epsilon\right)t}\cdot\|f\|_p
\end{align}
when $t$ is large enough. Meanwhile, by Proposition \ref{4zc}, for sufficiently large $t$ we have
$$|v\bigl(\varphi_t(z)\bigr)|<C \mathrm{e}^{( \operatorname{Re}\beta_0+\epsilon)t},$$
where the constant $C>0$ is independent of $t$. Therefore, 
$$|\mathrm{e}^{-\lambda t}v\bigl(\varphi_t(z)\bigr) f\bigl( \varphi_t(z)\bigr)|<C \mathrm{e}^{(-\operatorname{Re}\lambda+\gamma_0+2\epsilon)t}\cdot\|f\|_p,$$
which is integrable on $[0,+\infty)$ when $\epsilon$ is  taken sufficiently small. This shows that $\int_z^{\zeta_0}\omega_{\lambda,f}$ is well defined for every $z\in\mathbb{D}$.

Now we will show that $\int_z^{\zeta_0}\omega_{\lambda,f}$ is analytic.  We shall work on $\Omega$ for this purpose. Then it  is sufficient to prove that for any two numbers $y_1<y_2$ in $(q_0(+\infty), Q_0(+\infty))$,
$$\lim_{x\to\infty}\int_{x+iy_1}^{x+iy_2}(h^{-1})^*\omega_{\lambda,f}=0,$$
where $(h^{-1})^*\omega_{\lambda,f}$ denote the pull-back of $\omega_{\lambda,f}$ under $h^{-1}$. A direct calculation gives
$$(h^{-1})^*\omega_{\lambda,f}= \mathrm{e}^{-\lambda\eta}\cdot v\circ h^{-1}(\eta)f\circ h^{-1}(\eta) \mathrm{d}\eta.$$
Since the strip $i\lfloor y_1, y_2\rceil$ is contained in a nontangential region at $\zeta_0$, the same argument as for \eqref{xiangtong1} gives
$$|f\circ h^{-1}(x+iy)|<\mathrm{e}^{\left(\frac{2\alpha_0}{p}+\epsilon\right)x}\cdot\|f\|_p$$
for all $y\in[y_1, y_2]$ when $x$ is sufficiently large. Applying Proposition \ref{4zc} again, we obtain
\begin{align*}
\left|\int_{x+iy_1}^{x+iy_2} (h^{-1})^*\omega_{\lambda,f}\right|\leq C  \mathrm{e}^{(-\operatorname{Re}\lambda+\gamma_0+2\epsilon)x}\cdot|y_2-y_1|,
\end{align*}
which tends to $0$ as $x\to+\infty$. Therefore, $\int_z^{\zeta_0}\omega_{\lambda,f}$ is analytic on $\mathbb{D}$.

If $\beta_0=-\infty$, the proof is essentially the same. The only difference is that instead of using Proposition \ref{4zc}, one should follow the discussion concerning \eqref{daibu} in Theorem \ref{eigenvalue} to see that for any nontangential region $S$ at $\zeta_0$ and any $L>0$,
$$|v(z)|<C \mathrm{e}^{-L\operatorname{Re}h(z)}$$
for all $z\in S$ sufficiently close to $\zeta_0$. With this bound a similar computation shows that the integral converges and defines an analytic function.
\end{proof}

Analogously, for eachself-contact points $\zeta_j$ of $(\varphi_t)$ other than the Denjoy-Wolff point $\zeta_0$, we can consider the integral $\int_{\zeta_j}^z\omega_{\lambda,f}$ along the backward orbit $t\mapsto \varphi_{t}(z)$ for $t\in(-\infty,0]$. Formally, this integral is defined only on the petal $\Delta_j$. One can show that it defines an analytic function on $\Delta_j$ when $\operatorname{Re}\lambda<\gamma_j$. Indeed, we may assume that $\gamma_j\ne-\infty$. Let $(\tilde{\varphi}_t)=(\varphi_t^{-1})$, which is a flow on $\Delta_j$ with Denjoy-Wolff point $\tilde{\zeta}_0=\zeta_j$. Define 
$$\tilde{u}_t=\frac{1}{u_t\circ\varphi_t^{-1}}=\frac{v\circ\tilde{\varphi}_t}{v}$$
for $t\geq 0$ on $\Delta_j$. Then $(\tilde{u}_t)$ is a semicocycle for $(\tilde{\varphi}_t)_{t\geq0}$ on $\Delta_j$. Observe that $\tilde{\gamma}_0=-\gamma_j$, hence $\operatorname{Re}(-\lambda)>\tilde{\gamma}_0$. So the analyticity of $\int_{\zeta_j}^z\omega_{\lambda,f}$ on $\Delta_j$ now follows from Lemma \ref{jifenshoulian} once we notice that
\begin{align*}
\int_{\zeta_j}^z\omega_{\lambda,f}=&\mathrm{e}^{-\lambda h(z) }\int_{-\infty}^0\mathrm{e}^{-\lambda t}v\left(\varphi_t(z)\right)f\left(\varphi_t(z)\right)\mathrm{d}t\\
=&-\mathrm{e}^{-\lambda h(z)}\int_0^{+\infty}\mathrm{e}^{\lambda t}v(\tilde{\varphi}_t(z))f(\tilde{\varphi}_t(z))\mathrm{d}t.
\end{align*}
Furthermore, when $\operatorname{Re}\lambda<\gamma_j$, we can actually extend $\int_{\zeta_j}^z\omega_{\lambda,f}$ to the whole disk $\mathbb{D}$ by setting
$$\int_{\zeta_j}^z\omega_{\lambda,f}=\int_{\zeta_j}^{z'}\omega_{\lambda,f}+\int_{z'}^z\omega_{\lambda,f}$$
for some $z'\in\Delta_j$. This continuation is independent of the choice of $z'$. Thus we have prove the following lemma.

\begin{lemma}\label{zaibianyige}
If $\operatorname{Re}\lambda<\gamma_j$ for some $j\geq 1$, then $\int^z_{\zeta_j}\omega_{\lambda,f}$ gives an analytic function with respect to $z\in\mathbb{D}$ for each $f\in A^p$.
\end{lemma}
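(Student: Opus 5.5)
The plan is to reduce Lemma \ref{zaibianyige} to Lemma \ref{jifenshoulian} by reversing time on the petal $\Delta_j$, and then continue the resulting primitive analytically to all of $\mathbb{D}$.

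\textbf{Step 1: the trivial case and the reversed semigroup.} If $\gamma_j=-\infty$ the hypothesis $\operatorname{Re}\lambda<\gamma_j$ is never met, so we may assume $\gamma_j\ne-\infty$, which gives $\beta_j\ne-\infty$. By Proposition 13.4.2 in \cite{cs}, $(\varphi_t)$ restricts to a group of automorphisms of $\Delta_j$. Set $\tilde\varphi_t=\varphi_t^{-1}$ on $\Delta_j$ for $t\ge0$; this is a hyperbolic flow whose Denjoy--Wolff point is $\zeta_j$, with spectral value $-\alpha_j>0$. Put $\tilde u_t=v\circ\tilde\varphi_t/v$, a semicocycle for $(\tilde\varphi_t)$. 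At $\zeta_j$ it takes the value $\mathrm{e}^{\beta_j t}$, so $\tilde\beta_0=-\beta_j$, and the Koenigs function of $(\tilde\varphi_t)$ is $-h$. By Remark \ref{remarkchang}, $\mathrm{e}^{-\alpha_j h}$ maps $\Delta_j$ conformally onto a half-plane and is analytic at $\zeta_j$. Transporting to that half-plane (or to the disk), the reversed data fit the setting of Section \ref{sec4}, with $\tilde\gamma_0=-\gamma_j$ once the Bergman weights are accounted for.

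\textbf{Step 2: convergence and analyticity on $\Delta_j$.} Parametrize the backward orbit as $t\mapsto\varphi_{-t}(z)=\tilde\varphi_t(z)$. Along it,
$$\int_{\zeta_j}^z\omega_{\lambda,f}=-\mathrm{e}^{-\lambda h(z)}\int_0^{+\infty}\mathrm{e}^{\lambda t}v(\tilde\varphi_t(z))f(\tilde\varphi_t(z))\,\mathrm{d}t .$$
We then repeat the proof of Lemma \ref{jifenshoulian} with $-\lambda$ in place of $\lambda$. Three estimates are needed:
\begin{enumerate}
\item Since $\tilde\varphi_t(z)\to\zeta_j$ nontangentially, $1-|\tilde\varphi_t(z)|\approx\mathrm{e}^{\alpha_j t}$. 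Inequality \eqref{bergman} then gives $|f(\tilde\varphi_t(z))|\le \mathrm{e}^{(-2\alpha_j/p+\epsilon)t}\|f\|_p$.
\item Proposition \ref{4zyg}, with $\operatorname{Re}h(\tilde\varphi_t(z))=\operatorname{Re}h(z)-t$, gives $|v(\tilde\varphi_t(z))|\le C\mathrm{e}^{-(\operatorname{Re}\beta_j-\epsilon)t}$.
\item Combining these, the integrand is bounded by $C\mathrm{e}^{(\operatorname{Re}\lambda-\gamma_j+2\epsilon)t}$, which is integrable when $\operatorname{Re}\lambda<\gamma_j$.
\end{enumerate}
For analyticity in $z$, we work in $h(\Delta_j)$, which is a horizontal strip. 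For $q_j(-\infty)<y_1<y_2<Q_j(-\infty)$, the same bounds show that the vertical integrals of $(h^{-1})^*\omega_{\lambda,f}$ over $[x+iy_1,x+iy_2]$ tend to $0$ as $x\to-\infty$. Hence the integral is independent of the path within the petal, and it is a primitive of the holomorphic form $\omega_{\lambda,f}$ on $\Delta_j$.

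\textbf{Step 3: extension to $\mathbb{D}$.} Fix $z'\in\Delta_j$ and define
$$\int_{\zeta_j}^z\omega_{\lambda,f}=\int_{\zeta_j}^{z'}\omega_{\lambda,f}+\int_{z'}^z\omega_{\lambda,f}.$$
Because $\mathbb{D}$ is simply connected and $\omega_{\lambda,f}$ is holomorphic, the second term is path-independent and analytic in $z$. By Step 2 the sum does not depend on the choice of $z'$, and it agrees with the petal definition on $\Delta_j$.

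\textbf{Main obstacle.} The delicate point is justifying the nontangential growth estimates at the repelling point $\zeta_j$. Two things must be checked: that $\tilde\varphi_t(z)$ converges nontangentially with the rate $\mathrm{e}^{\alpha_j t}$ (this uses the conformality of $\Delta_j$ at $\zeta_j$ from Theorem \ref{ana}), and that Proposition \ref{4zyg} applies uniformly on the vertical segments inside the petal strip.
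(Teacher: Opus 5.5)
Your proposal is correct and follows essentially the same route as the paper. You reverse the flow on the petal $\Delta_j$ so that $\zeta_j$ becomes the Denjoy--Wolff point with $\tilde\gamma_0=-\gamma_j$, obtain convergence and analyticity on $\Delta_j$ as in Lemma \ref{jifenshoulian} (rerunning its estimates directly in $\mathbb{D}$ via \eqref{bergman} and Proposition \ref{4zyg} rather than citing the lemma for the reversed system, which is if anything cleaner), and extend to $\mathbb{D}$ by $\int_{\zeta_j}^{z'}\omega_{\lambda,f}+\int_{z'}^{z}\omega_{\lambda,f}$ with $z'\in\Delta_j$. One harmless slip, which the paper shares: the substitution $s=-t$ in $\int_{-\infty}^0$ gives $+\mathrm{e}^{-\lambda h(z)}\int_0^{+\infty}\mathrm{e}^{\lambda t}v(\tilde\varphi_t(z))f(\tilde\varphi_t(z))\,\mathrm{d}t$ rather than $-$, and this does not affect the argument.
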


\begin{remark}\label{zuihouremark}
The proofs of Lemma \ref{jifenshoulian} and \ref{zaibianyige} shows that when considering the integrals  $\int^z_{\zeta_j}\omega_{\lambda,f}$ for $j\geq0$, the integration path can be taken as any smooth curve from $z$ to $\zeta_j$ that approaches $\zeta_j$ nontangentially.
\end{remark}

In what follows, we will proceed case by case to prove five claims, all aimed at searching a constant $K$ such that the function $F(z)$ in \eqref{difsolu} belongs to the space $A^p$ for different $\lambda\in\mathbb{C}$.

\begin{claim}\label{>0}
If $ \operatorname{Re}\lambda>\gamma_0$, then for any $f\in A^p$,
$$\frac{ \mathrm{e}^{\lambda h(z)}}{v(z)}\left(K-\int_0^z\omega_{\lambda,f}\right)\in A^p_{\zeta_0}$$
if and only if $K=\int_0^{\zeta_0}\omega_{\lambda,f}$.
\end{claim}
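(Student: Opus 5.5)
Since $\operatorname{Re}\lambda>\gamma_0$, Lemma \ref{jifenshoulian} shows that $\int_z^{\zeta_0}\omega_{\lambda,f}$ is an analytic function of $z$. Setting $K_0=\int_0^{\zeta_0}\omega_{\lambda,f}$, we can write
$$K-\int_0^z\omega_{\lambda,f}=(K-K_0)+\int_z^{\zeta_0}\omega_{\lambda,f}.$$
The claim then splits into two assertions. First, $\frac{\mathrm{e}^{\lambda h}}{v}\int_z^{\zeta_0}\omega_{\lambda,f}\in A^p_{\zeta_0}$. Second, $\frac{\mathrm{e}^{\lambda h}}{v}\notin A^p_{\zeta_0}$. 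Together these show that the sum lies in $A^p_{\zeta_0}$ exactly when $K=K_0$.

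\emph{Non-membership.} This follows from the proof of Theorem \ref{eigenvalue}. If $\beta_0\ne-\infty$, Corollary \ref{zhishu} together with Proposition \ref{4zc} (taking $\epsilon$ small) shows that $\mathrm{e}^{\lambda h}/v\notin A^p_{\zeta_0}$ when $\operatorname{Re}\lambda>\gamma_0$. If $\beta_0=-\infty$, the orbit argument around \eqref{daibu} shows that $\mathrm{e}^{\lambda h}/v$ violates \eqref{bergman} along $\varphi_t(z_0)$, so it lies in no $A^p_{\zeta_0}$.

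\emph{Membership of the main term.} Write $F_0=\frac{\mathrm{e}^{\lambda h}}{v}\int_z^{\zeta_0}\omega_{\lambda,f}$. Along the forward orbit we have
$$F_0(z)=\int_0^\infty \mathrm{e}^{-\lambda t}\,\frac{v(\varphi_t(z))}{v(z)}\,f(\varphi_t(z))\,\mathrm{d}t=\int_0^\infty \mathrm{e}^{-\lambda t}\,u_t(z)f(\varphi_t(z))\,\mathrm{d}t.$$
Formally, then, $F_0=\int_0^\infty \mathrm{e}^{-\lambda t}u_tC_{\varphi_t}f\,\mathrm{d}t$, which is the Laplace transform of the semigroup. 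This integral only converges in norm for $\operatorname{Re}\lambda$ beyond the growth bound, so we localize near $\zeta_0$ instead. Choose $M$ large and let $U=h^{-1}(\{\operatorname{Re}w\ge M\})$, which is forward invariant. We then estimate
$$\|F_0\|_{L^p(U)}\le\int_0^\infty \mathrm{e}^{-\operatorname{Re}\lambda\, t}\,\|u_t\,f\circ\varphi_t\|_{L^p(U)}\,\mathrm{d}t$$
by Minkowski's inequality. The key step is to bound $\|u_t f\circ\varphi_t\|_{L^p(U)}\le C\mathrm{e}^{(\gamma_0+\epsilon)t}\|f\|_p$. For this we rerun the argument of Theorem \ref{rad} restricted to $U$. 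We write $u_t=\nu_t(\varphi_t')^{2/p}$ and change variables. Since every point of $U$ stays near $\zeta_0$ along its orbit, only the factor $|\nu_1|\le \mathrm{e}^{\gamma_0}+\epsilon$ near $\zeta_0$ enters, and the other $\gamma_j$ never appear. Lemma \ref{commombd} controls the fractional times $t\in[0,1]$ (when $\beta_0\ne-\infty$). When $\beta_0=-\infty$, we have $|\nu_1|$ arbitrarily small on $U$ and the bound is even better. With $\epsilon<\operatorname{Re}\lambda-\gamma_0$ the $t$-integral converges, so $F_0\in A^p(U)$. Since $U$ contains $V\cap\mathbb{D}$ for some neighborhood $V$ of $\zeta_0$ (by the analyticity of $\mathrm{e}^{-\alpha_0 h}$ at $\zeta_0$), we get $F_0\in A^p_{\zeta_0}$.

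The main obstacle is making this localized spectral-radius bound on $U$ rigorous. The points $\varphi_t^{-1}$-preimages of $U$ need not lie in $U$, but we only integrate over $z\in U$, so the change of variables maps $U$ into $\varphi_t(U)\subset U$. This should make the estimate cleaner than Theorem \ref{rad}. I would also double-check measurability of the integrand in $(t,z)$, which is needed to justify Minkowski's integral inequality.
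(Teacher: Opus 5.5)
Your proposal is correct. The non-membership half is exactly the paper's argument. The two proofs differ in how they show $\frac{\mathrm{e}^{\lambda h}}{v}\int_z^{\zeta_0}\omega_{\lambda,f}\in A^p_{\zeta_0}$.

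\textbf{The paper's route.} The paper does not localize. It replaces $v$ by $\tilde v=v/\prod_{j\geq1}(z-\zeta_j)^{c_j}$. This keeps $\beta_0$ but pushes every $\tilde\gamma_j$ with $j\geq1$ below $\operatorname{Re}\lambda$. It then applies Theorem \ref{rad} to the whole auxiliary semigroup $(\tilde u_tC_{\varphi_t})$ to get a growth bound below $\operatorname{Re}\lambda$. From this it concludes that the Laplace-type integral lies in $L^p(\mathbb{D})$. Finally it transfers back to $v$, using that $v/\tilde v$ is bounded above and below near $\zeta_0$.

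\textbf{Your route.} You restrict to the forward-invariant set $U=h^{-1}(\{\operatorname{Re}w\geq M\})$, on which every orbit only sees $|\nu_1|\leq \mathrm{e}^{\gamma_0}+\epsilon$. Hence $|\nu_n|\leq(\mathrm{e}^{\gamma_0}+\epsilon)^n$ pointwise on $U$. Univalence of $\varphi_n$ then gives $\|u_n\, f\circ\varphi_n\|_{L^p(U)}\leq(\mathrm{e}^{\gamma_0}+\epsilon)^n\|f\|_p$ in one line. You need no partition of $\mathbb{D}$ as in Theorem \ref{rad}, and no auxiliary cocycle; the paper uses the boundedness of that cocycle on $A^p$ without comment.

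\textbf{What each buys.} The paper's route is reusable: the same global estimate is invoked verbatim in Claims \ref{others} and \ref{>j}. There the relevant orbits start near a non-fixed boundary point or near $\zeta_j$ and must cross the middle of $\Omega$, so a purely local bound at $\zeta_0$ would not suffice.

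\textbf{A detail to tidy.} For non-integer $t$, Lemma \ref{commombd} bounds $u_r$ but not the factor $(\varphi_r')^{-2/p}$ in $\nu_r$. It is also unavailable when $\beta_0=-\infty$, which your sketch passes over. Both issues disappear if you write $u_tC_{\varphi_t}f=u_nC_{\varphi_n}(u_rC_{\varphi_r}f)$ with $n=\lfloor t\rfloor$ and $r=t-n$. Then use $\sup_{0\leq r\leq1}\|u_rC_{\varphi_r}\|<\infty$, which holds for any $C_0$-semigroup. This gives $\|u_t\,f\circ\varphi_t\|_{L^p(U)}\leq C(\mathrm{e}^{\gamma_0}+\epsilon)^{\lfloor t\rfloor}\|f\|_p$ in both cases.
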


\begin{proof}
Let 
$$\tilde{v}(z)=\frac{v(z)}{\Pi_{j\geq1}(z-\zeta_j)^{c_j}},$$
where $\{c_j\}_{j\geq 1}$ are constants to be determined, and we set $c_j=0$ whenever $\gamma_j=-\infty$. Define
$$\tilde{u}_t(z)=\frac{\tilde{v}(\varphi_t(z))}{\tilde{v}(z)},$$
which gives the semicocycle for $(\varphi_t)$ with semicoboundary $\tilde{v}$. Note that  $\tilde{\beta}_0=\beta_0$ and $\tilde{\beta}_j=\beta_j+c_j\alpha_j$, hence by taking $\{c_j\}$ appropriately we may have
$$\max_{j\geq0}\tilde{\gamma}_j<\operatorname{Re}\lambda.$$ 
Denote
$$\tilde{\omega}_{\lambda,f}=\mathrm{e}^{-\lambda h(\xi)}h'(\xi)\tilde{v}(\xi)f(\xi) \mathrm{d}\xi.$$
Then 
$$\left|\frac{ \mathrm{e}^{\lambda h(z)}}{\tilde{v}(z)}\right|\int_z^{\zeta_0}|\tilde{\omega}_{\lambda,f}|=\int_0^{+\infty} \mathrm{e}^{-\operatorname{Re}\lambda t}\left|\tilde{u}_tC_{\varphi_t}f(z)\right| \mathrm{d}t.$$
By Theorem \ref{rad}, $\lim_{t\to+\infty}\|\tilde{u}_tC_{\varphi_t}\|^{1/t}< \mathrm{e}^{\operatorname{Re}\lambda}$. Therefore, we have
$$\left|\frac{ \mathrm{e}^{\lambda h(z)}}{\tilde{v}(z)}\right|\int_z^{\zeta_0}|\tilde{\omega}_{\lambda,f}|\in L^p(\mathbb{D}).$$
However, as $\frac{ \mathrm{e}^{\lambda h(z)}}{v(z)}\int_z^{\zeta_0}\omega_{\lambda,f}$ is dominated by a constant multiple of $\frac{ \mathrm{e}^{\lambda h(z)}}{\tilde{v}(z)}\int_z^{\zeta_0}|\tilde{\omega}_{\lambda,f}|$ near $\zeta_0$, the former belongs to $A^p_{\zeta_0}$. That is, 
$$\frac{ \mathrm{e}^{\lambda h(z)}}{v(z)}\left(K-\int_0^z\omega_{\lambda,f}\right)\in A^p_{\zeta_0}$$
when $K=\int_0^{\zeta_0}\omega_{\lambda,f}$. 

Finally, the proof of Theorem \ref{eigenvalue} tells us that $\frac{ \mathrm{e}^{\lambda h(z)}}{v(z)}\notin A_{\zeta_0}^p$. Therefore, if $K\ne\int_0^{\zeta_0}\omega_{\lambda,f}$, the function $\frac{ \mathrm{e}^{\lambda h(z)}}{v(z)}\left(K-\int_0^z\omega_{\lambda,f}\right)$ cannot belong to $A^p_{\zeta_0}$.
\end{proof}

The proof of Claim \ref{>0} actually gives the following claim. 

\begin{claim}\label{others}
Let $\zeta$ be a point on $\partial \mathbb{D}$ that is not a fixed point of $(\varphi_t)$. Then
$$\frac{ \mathrm{e}^{\lambda h(z)}}{v(z)}\left(K-\int_0^{z}\omega_{\lambda,f}\right)\in A^p_{\zeta}$$
for every $f\in A^p$ and $\lambda, K\in\mathbb{C}$. 
\end{claim}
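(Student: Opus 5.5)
The plan is to reuse the device from the proof of Claim \ref{>0}: split $F$ into a piece defined by a convergent integral along forward orbits, and a piece that is simply analytic across $\zeta$. Since $\zeta$ is not a fixed point, it is not a self-contact point. By definition, for every $t>0$ there is then a neighborhood $U_t$ of $\zeta$ with $\varphi_t(U_t\cap\mathbb{D})\cap U_t=\emptyset$. Combined with the structure of $\Omega$ described in Remark \ref{remarkchang}, I expect this to give the following. There is a neighborhood $U$ of $\zeta$ such that $h(U\cap\mathbb{D})$ lies in the bounded region $\{w: |\operatorname{Re}w|\leq M\}$ of $\Omega$, away from $\zeta_0$ and all $\zeta_j$. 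Moreover, $U\cap\mathbb{D}$ meets no petal near $\zeta$ in a way that would force integrability issues.

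The first step is to choose, as in Claim \ref{>0}, a modified semicoboundary
\[
\tilde v(z)=\frac{v(z)}{\prod_{j\geq1}(z-\zeta_j)^{c_j}}
\]
together with a number $\mu$, with $\operatorname{Re}\mu$ large, such that $\max_j\tilde\gamma_j<\operatorname{Re}\mu$. Since $\tilde v/v$ is analytic and nonvanishing near $\zeta$, the two are comparable there. I then write
\[
\frac{\mathrm{e}^{\lambda h}}{v}\Bigl(K-\int_0^z\omega_{\lambda,f}\Bigr)
=\frac{\mathrm{e}^{\lambda h}}{v}\Bigl(K'-\int_z^{\zeta_0}\omega_{\lambda,f}\Bigr).
\]
This identity makes sense only if $\operatorname{Re}\lambda>\gamma_0$. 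To avoid that restriction, I will instead handle general $\lambda$ by writing $\mathrm{e}^{-\lambda h}=\mathrm{e}^{-\mu h}\mathrm{e}^{(\mu-\lambda)h}$. The factor $\mathrm{e}^{\pm(\mu-\lambda)h}$ is bounded above and below on $U\cap\mathbb{D}$ because $\operatorname{Re}h$ is bounded there, and $\operatorname{Im} h$ is bounded since $\Omega$ lies in a strip. Thus $f\mathrm{e}^{(\mu-\lambda)h}$ agrees with an $A^p(U\cap\mathbb{D})$-function near $\zeta$. The integral along the orbit from $z$ to $\zeta_0$ then splits into two parts: an integral over $t\in[0,T]$, which stays in a compact-in-$\operatorname{Re}h$ region, and a tail that is analytic and bounded near $\zeta$.

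The core estimate, exactly as in Claim \ref{>0}, is the pointwise bound
\[
\Bigl|\frac{\mathrm{e}^{\lambda h(z)}}{\tilde v(z)}\Bigr|\int_z^{\zeta_0}|\tilde\omega_{\lambda,f}|
=\int_0^\infty \mathrm{e}^{-\operatorname{Re}\lambda t}|\tilde u_tC_{\varphi_t}f(z)|\,\mathrm{d}t .
\]
By Theorem \ref{rad} and Minkowski's inequality, this lies in $L^p$ for $\operatorname{Re}\lambda$ large. For the remaining, possibly small, $\lambda$, I use the boundedness of $\operatorname{Re}h$ on $U\cap\mathbb{D}$, so only the finite-time portion changes, together with $u_t$ bounded above and below there. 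The latter follows from the continuity of the cocycle away from fixed points; I would argue it via Lemma \ref{commombd}-type reasoning applied to $u_t=v\circ\varphi_t/v$ on a set where $v$ is bounded above and below. The term $K\mathrm{e}^{\lambda h}/v$ is handled like the corresponding term in Theorem \ref{eigenvalue}: $|\mathrm{e}^{\lambda h}/v|$ is integrable on $\{-M\leq\operatorname{Re}h\leq M\}$.

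The main obstacle I expect is justifying that $v$ and $1/v$ (or $\mathrm{e}^{\lambda h}/v$) are controlled near a non-fixed boundary point $\zeta$, since $u_t$ is not assumed bounded. For this I would use the identity from Theorem \ref{eigenvalue}:
\[
\frac{\mathrm{e}^{\lambda h}}{v}=\frac{\mathrm{e}^{\lambda h}}{v\circ\varphi_{2M}}\,u_{2M}.
\]
Here $u_{2M}=u_{2M}C_{\varphi_{2M}}1\in A^p$, and $v\circ\varphi_{2M}$ is bounded below because $\varphi_{2M}$ pushes the region into $\operatorname{Re}h\geq M$, where Proposition \ref{4zc} applies. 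The same trick, applied to $u_tC_{\varphi_t}(\cdot)$ inside the integral, gives the $L^p$ bound near $\zeta$.
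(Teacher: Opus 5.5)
Your proposal has a genuine gap, at the step you yourself flagged: getting from a large $\mu$ back to an arbitrary $\lambda$.

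\textbf{The shift to $\mu$ gains nothing.} Put $\hat v=v\,\mathrm{e}^{(\mu-\lambda)h}$. Then $\mathrm{e}^{\lambda h}/v=\mathrm{e}^{\mu h}/\hat v$, and $\omega_{\lambda,f}$ built from $v$ is exactly $\omega_{\mu,f}$ built from $\hat v$. Moreover $\hat u_t=\mathrm{e}^{(\mu-\lambda)t}u_t$, so every $\gamma_j$ moves by $\operatorname{Re}(\mu-\lambda)$, and the condition $\operatorname{Re}\mu>\hat\gamma_0$ is literally $\operatorname{Re}\lambda>\gamma_0$. Equivalently, $f\mathrm{e}^{(\mu-\lambda)h}$ lies in $A^p$ near $\zeta$ but in general not in $A^p(\mathbb{D})$: near $\zeta_0$ the factor has size $|z-\zeta_0|^{-\operatorname{Re}(\mu-\lambda)/\alpha_0}$. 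The majorant from Claim \ref{>0} and Theorem \ref{rad} needs a global $A^p$ input, because the integral runs along the whole orbit up to $\zeta_0$.

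\textbf{``Only the finite-time portion changes'' is false.} Along the orbit $\operatorname{Re}h(\varphi_t(z))=\operatorname{Re}h(z)+t$ is unbounded, so replacing $\mu$ by $\lambda$ multiplies the integrand at time $t$ by $\mathrm{e}^{(\mu-\lambda)t}$, tail included. For $\operatorname{Re}\lambda<\gamma_0$ the tail $\int_{\varphi_T(z)}^{\zeta_0}\omega_{\lambda,f}$ need not converge at all; try $f(z)=(1-\overline{\zeta_0}z)^{-2/p+\delta}$ with small $\delta>0$. Even where it converges, nothing makes it bounded as $z\to\zeta$, since $\varphi_T(z)$ may tend to $\partial\mathbb{D}$, as it does for hyperbolic automorphisms.

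\textbf{The two-sided bounds on $u_t$ and $v$ near $\zeta$ are not available.} The hypotheses control $u_t$ only at the fixed points, and $u_t$ can be unbounded near non-fixed boundary points, for instance through a factor $(\varphi_t')^{2/p}$. Your handling of the term $K\mathrm{e}^{\lambda h}/v$ via $u_{2M}\in A^p$ is fine and matches the paper.

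\textbf{Comparison with the paper.} The paper does not shift $\lambda$. It divides $v$ by $\prod_{j\geq0}(z-\zeta_j)^{c_j}$: these factors are bounded above and below near $\zeta$ and each moves a single $\gamma_j$, so $\operatorname{Re}\lambda$ itself can be made to exceed every $\tilde\gamma_j$. It then applies the majorant of Claim \ref{>0} to $\tilde v$, compares $v$ with $\tilde v$ near $\zeta$, and handles $K\mathrm{e}^{\lambda h}/v$ as in Theorem \ref{eigenvalue}.
\begin{itemize}
\item For $\operatorname{Re}\lambda>\gamma_0$ one may take $c_0=0$ and real $c_j\geq 0$. Then $|v|\leq C|\tilde v|$ on all of $\mathbb{D}$, so the majorant controls $\int_z^{\zeta_0}\omega_{\lambda,f}$ along the entire orbit. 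This is exactly your argument with $\mu=\lambda$, and it is complete.
\item For $\operatorname{Re}\lambda\leq\gamma_0$, the paper's choice forces $\operatorname{Re}c_0<0$. Then $|v/\tilde v|\asymp|z-\zeta_0|^{\operatorname{Re}c_0}$ blows up along the orbit, and the asserted domination ``near $\zeta$'' does not follow from comparability of $v$ and $\tilde v$ at $\zeta$ alone. Borrowing that step therefore will not repair your proof either.
\end{itemize}

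\textbf{What is missing.} In the range $\operatorname{Re}\lambda\leq\gamma_0$ you need an argument that stays local at $\zeta$. One possible route starts from the exact identity $F=\mathrm{e}^{-\lambda T}u_TC_{\varphi_T}F+\int_0^T\mathrm{e}^{-\lambda t}u_tC_{\varphi_t}f\,\mathrm{d}t$, with $F$ as in \eqref{difsolu}. It holds for every $\lambda$, and its second term lies in $A^p$. It transfers the question to the behavior of $F$ on $\varphi_T(U\cap\mathbb{D})$, which for large $T$ is a small neighborhood of $\zeta_0$. The price is a localized Carleson-type estimate for $u_TC_{\varphi_T}$.
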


\begin{proof}
Given $\lambda\in\mathbb{C}$. Define
$$\tilde{v}(z)=\frac{v(z)}{\Pi_{j\geq0}(z-\zeta_j)^{c_j}},$$
and let $\tilde{u}_t=\frac{\tilde{v}\circ\varphi_t}{\tilde{v}},$
which gives the semicocycle for $(\varphi_t)$ with semicoboundary $\tilde{v}$.
Choose $c_k$ so that 
$$\operatorname{Re}\lambda>\max_{j\geq0}\tilde{\gamma}_j.$$
The proof of Claim \ref{>0} then shows that 
$$\left|\frac{ \mathrm{e}^{\lambda h(z)}}{\tilde{v}(z)}\right|\int_z^{\zeta_0}|\tilde{\omega}_{\lambda,f}|\in L^p(\mathbb{D}).$$
where
$$\tilde{\omega}_{\lambda,f}=\mathrm{e}^{-\lambda h(\xi)}h'(\xi)\tilde{v}(\xi)f(\xi) \mathrm{d}\xi.$$
Note that near any non-fixed boundary point $\zeta$ of  $(\varphi_t)$, the function $\frac{ \mathrm{e}^{\lambda h(z)}}{v(z)}\int_z^{\zeta_0}\omega_{\lambda,f}$ is dominated by a constant multiple of $\frac{ \mathrm{e}^{\lambda h(z)}}{\tilde{v}(z)}\int_z^{\zeta_0}|\tilde{\omega}_{\lambda,f}|$. So we have
$$\frac{ \mathrm{e}^{\lambda h(z)}}{v(z)}\int_z^{\zeta_0}\omega_{\lambda,f}\in A^p_\zeta.$$
Moreover, the proof of Theorem \ref{eigenvalue} shows that $\frac{ \mathrm{e}^{\lambda h(z)}}{v(z)}\in A^p_\zeta.$ Therefore, 
 $$\frac{ \mathrm{e}^{\lambda h(z)}}{v(z)}\left(K-\int_0^{z}\omega_{\lambda,f}\right)\in A^p_{\zeta}$$
for any $K\in\mathbb{C}$. 
\end{proof}

\begin{claim}\label{>j}
If $ \operatorname{Re}\lambda>\gamma_j$ for some $j\geq 1$, then
$$\frac{ \mathrm{e}^{\lambda h(z)}}{v(z)}\left(K-\int_0^{z}\omega_{\lambda,f}\right)\in A^p_{\zeta_j}$$
for any $f\in A^p$ and $K\in\mathbb{C}$. 
\end{claim}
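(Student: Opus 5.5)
Since $\operatorname{Re}\lambda>\gamma_j$, the proof of Theorem \ref{eigenvalue} already gives $\frac{\mathrm{e}^{\lambda h}}{v}\in A^p_{\zeta_j}$. Indeed, if $\beta_j\ne-\infty$ this follows from Corollary \ref{zhishu} and Proposition \ref{4zyg}, and if $\beta_j=-\infty$ it follows from the geometric-series argument over the sets $U_k$. So the term involving $K$ is harmless, and it suffices to show
$$\frac{\mathrm{e}^{\lambda h(z)}}{v(z)}\int_0^z\omega_{\lambda,f}\in A^p_{\zeta_j}.$$
I will mimic the proofs of Claims \ref{>0} and \ref{others}, but integrate toward $\zeta_0$ rather than from $0$. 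Specifically, I will write
$$\int_0^z\omega_{\lambda,f}=\int_0^{\zeta_0}\tilde\omega-\int_z^{\zeta_0}\tilde\omega$$
after a modification of $v$ that makes the integral to $\zeta_0$ convergent.

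\textbf{Step 1: modify the semicoboundary.} Set $\tilde v(z)=v(z)/\prod_{k\geq0}(z-\zeta_k)^{c_k}$, taking $c_j=0$ (and $c_k=0$ whenever $\gamma_k=-\infty$). Choose the remaining $c_k$ so that $\max_k\tilde\gamma_k<\operatorname{Re}\lambda$; this is possible because $\tilde\gamma_j=\gamma_j<\operatorname{Re}\lambda$ already. Define $\tilde\omega_{\lambda,f}$ with $\tilde v$ in place of $v$. Then $\omega_{\lambda,f}=P\,\tilde\omega_{\lambda,f}$ with $P=\prod_k(\xi-\zeta_k)^{c_k}$, where $P$ is analytic and nonvanishing near $\zeta_j$. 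Near $\zeta_j$ the quotient $\mathrm{e}^{\lambda h}/v$ is comparable to $\mathrm{e}^{\lambda h}/\tilde v$.

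\textbf{Step 2: the localized integral.} Fix a small neighborhood $V$ of $\zeta_j$ and a base point $z_1\in V\cap\mathbb{D}$. For $z\in V\cap\mathbb{D}$, write $\int_0^z\omega_{\lambda,f}=\text{const}+\int_{z_1}^z P\,\tilde\omega_{\lambda,f}$. The constant term is handled by Step 0. Along the forward orbit from $z$, the identity from Claim \ref{>0},
$$\left|\frac{\mathrm{e}^{\lambda h(z)}}{\tilde v(z)}\right|\int_z^{\zeta_0}|\tilde\omega_{\lambda,f}|=\int_0^\infty \mathrm{e}^{-\operatorname{Re}\lambda t}|\tilde u_tC_{\varphi_t}f(z)|\,\mathrm{d}t,$$
together with Theorem \ref{rad}, places this quantity in $L^p(\mathbb{D})$. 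The difficulty is that $P$ is not bounded along the whole orbit to $\zeta_0$, since $c_0$ may be nonzero. To avoid this, I will instead choose $c_0=0$ as well, and only adjust the $c_k$ with $k\ne 0,j$. This still requires $\tilde\gamma_0=\gamma_0<\operatorname{Re}\lambda$, which need not hold.

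\textbf{Step 3: splitting the orbit (the main obstacle).} This is where I expect the real work. I will split $\int_z^{\zeta_0}$ into the part of the orbit before it reaches $\{\operatorname{Re}h\geq -M\}$ and the remainder. Points near $\zeta_j$ have $\operatorname{Re}h(z)\to-\infty$, and along the first part of the orbit $P$ is bounded. The second part contributes $\mathrm{e}^{\lambda h(z)}/v(z)$ times a quantity that is uniformly controlled, depending on $z$ only through $\varphi_{t_z}(z)$ in a compact-type region. More concretely, I will work in the canonical model $\Gamma_j$: for $z$ near $\zeta_j$ I will integrate along the horizontal line from $h(z)$ to $\operatorname{Re}w=-M$, and then along a fixed path to $z_1$. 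The integral over the fixed path, multiplied by $\mathrm{e}^{\lambda h}/v$, lies in $A^p_{\zeta_j}$ by Step 0 once it is shown to be bounded, which follows from the bounds on $\tilde v$ and $f$ on the compact piece $\{-M\le\operatorname{Re}h\le -M+1\}$. The horizontal part equals $\int_0^{\tau(z)}\mathrm{e}^{-\lambda t}\tilde u_tC_{\varphi_t}f(z)\,\mathrm{d}t$ with $\tau(z)\le\infty$. I will dominate it by the truncated integral and bound its $L^p(V)$ norm using a localized version of the norm estimate in Theorem \ref{rad}. The key point is that only the repelling point $\zeta_j$, with $\tilde\gamma_j<\operatorname{Re}\lambda$, is visited along the part of the orbit that stays in $\{\operatorname{Re}h\le -M\}$, so the decay rate exceeds $\mathrm{e}^{\tilde\gamma_j t}$.
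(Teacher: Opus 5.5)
Your Steps 0 and 1 are the paper's set-up: the paper takes $\tilde v=v/\prod_{k\ne j}(z-\zeta_k)^{c_k}$ and quotes the argument of Claim \ref{>0}. Your Step 2 objection is correct. Since $\tilde\beta_0=\beta_0+c_0\alpha_0$ with $\alpha_0>0$, pushing $\tilde\gamma_0$ below $\operatorname{Re}\lambda$ when $\operatorname{Re}\lambda\le\gamma_0$ forces $\operatorname{Re}c_0<0$. Then $v/\tilde v$ is unbounded at $\zeta_0$, and $\int_z^{\zeta_0}\omega_{\lambda,f}$ itself can diverge in that range. The paper's proof covers this with one sentence: $\frac{\mathrm{e}^{\lambda h}}{v}\int_0^z\omega_{\lambda,f}$ is dominated near $\zeta_j$ by $\frac{\mathrm{e}^{\lambda h}}{\tilde v}\int_0^z|\tilde\omega_{\lambda,f}|$, where $\int_0^z$ is routed through $\zeta_0$. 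Along such paths that domination cannot hold, so localizing at $\zeta_j$ is the right instinct. However, your Step 3, where the proof actually has to happen, is only a plan, and its one concrete justification is wrong.

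\emph{The connecting term.} Its starting point $h^{-1}(-M+i\operatorname{Im}h(z))$ moves with $z$, so the path is not fixed. Moreover, $h^{-1}(\{-M\le\operatorname{Re}w\le -M+1\})$ is not relatively compact in $\mathbb{D}$. If $Q_j$ (or $q_j$) is constant near $-\infty$, for instance when $\Omega$ is a strip in the automorphism case, then as $z\to\zeta_j$ along the edge of the petal these starting points accumulate at $\partial\Omega$. That is, they approach non-fixed points of $\partial\mathbb{D}$, where $f\in A^p$ need not be bounded, so boundedness of the connecting integral does not follow. One repair: join $-M+i\operatorname{Im}h(z)$ to $-M+ib$ along $\operatorname{Re}w=-M$. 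Choose $M$ so that $\int|f\circ h^{-1}(-M+it)|\,\mathrm{d}t<\infty$; almost every large $M$ works, by Fubini. Then bound $v$ on that segment by Proposition \ref{4zyg} (for $\gamma_j\ne-\infty$). Alternatively, treat the vertical piece exactly as $H_2$ in Claim \ref{<j}.

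\emph{The localized estimate.} Your ``localized version'' of Theorem \ref{rad} is the heart of the claim, and it is only asserted. It is provable, and it needs no modification of $v$ at all. The horizontal piece equals $\int_0^{\tau(z)}\mathrm{e}^{-\lambda t}u_t(z)f(\varphi_t(z))\,\mathrm{d}t$. Write $t=n+r$ and $u_t\, f\circ\varphi_t=u_n\cdot(u_rC_{\varphi_r}f)\circ\varphi_n$, and put $\nu_n=u_n/(\varphi_n')^{2/p}$. By Minkowski's inequality and the substitution $w=\varphi_n(z)$, the $L^p(V)$-norm of the horizontal piece is at most $C\sum_n \mathrm{e}^{-n\operatorname{Re}\lambda}\sup_{\{\tau\ge n\}}|\nu_n|\,\sup_{0\le r\le 1}\|u_rC_{\varphi_r}\|\,\|f\|_p$. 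On $\{\tau\ge n\}$ the orbit up to time $n$ stays in $h^{-1}(\Gamma_j(M))$. There $|\nu_1|\le \mathrm{e}^{\gamma_j}+\epsilon$, by continuity of $u_1$ and $\varphi_1'$ at $\zeta_j$, so $|\nu_n|\le(\mathrm{e}^{\gamma_j}+\epsilon)^n$. The series is then summable precisely because $\operatorname{Re}\lambda>\gamma_j$.

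Until both pieces are supplied, the proposal does not yet prove the claim.
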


\begin{proof}
Define 
$$\tilde{v}(z)=\frac{v(z)}{\Pi_{k\ne j}(z-\zeta_k)^{c_k}},$$ 
and let $\tilde{u}_t=\frac{\tilde{v}\circ\varphi_t}{\tilde{v}}.$ Choose the constants $\{c_k\}$ so that 
$$\operatorname{Re}\lambda>\gamma_j=\max_{k\geq0}\tilde{\gamma}_k.$$ 
Then the proof of Claim \ref{>0} shows that 
$$\frac{ \mathrm{e}^{\lambda h(z)}}{\tilde{v}(z)}\int_z^{\zeta_0}|\tilde{\omega}_{\lambda,f}|\in L^p(\mathbb{D}),$$
where
$$\tilde{\omega}_{\lambda,f}=\mathrm{e}^{-\lambda h(\xi)}h'(\xi)\tilde{v}(\xi)f(\xi) \mathrm{d}\xi.$$
Moreover, by Theorem \ref{eigenvalue}, 
$$\frac{ \mathrm{e}^{\lambda h(z)}}{\tilde{v}(z)}\in A^p_{\zeta_j}.$$
Consequently,
$$\frac{ \mathrm{e}^{\lambda h(z)}}{\tilde{v}(z)}\int_0^z|\tilde{\omega}_{\lambda,f}|=\frac{ \mathrm{e}^{\lambda h(z)}}{\tilde{v}(z)}\int_0^{\zeta_0}|\tilde{\omega}_{\lambda,f}|-\frac{ \mathrm{e}^{\lambda h(z)}}{\tilde{v}(z)}\int_z^{\zeta_0}|\tilde{\omega}_{\lambda,f}|\in L^p_{\zeta_j}(\mathbb{D}).$$ 
Since $\frac{ \mathrm{e}^{\lambda h(z)}}{v(z)}\int_0^{z}\omega_{\lambda,f}$ is dominated by a constant multiple of $\frac{ \mathrm{e}^{\lambda h(z)}}{\tilde{v}(z)}\int_0^z|\tilde{\omega}_{\lambda,f}|$ near $\zeta_j$, it belongs to $A^p_{\zeta_j}.$ Also, the proof of Theorem \ref{eigenvalue} shows that 
$$\frac{ \mathrm{e}^{\lambda h(z)}}{v(z)}\in A^p_{\zeta_j}.$$ 
Therefore, 
$$\frac{ \mathrm{e}^{\lambda h(z)}}{v(z)}\left(K-\int_0^{z}\omega_{\lambda,f}\right)\in A^p_{\zeta_j}$$
for any $K\in\mathbb{C}$. 
\end{proof}

Before we move on to the remaining two claims, we need a following lemma.

\begin{lemma}\label{lastlemma}
If $\gamma_0\ne-\infty$, then there exist a neighborhood $U$ of $\zeta_0$ and a constant $C>0$ such that
$$ |v(z)|\leq C |v(z')|$$
for all $z, z'\in U\cap\mathbb{D}$ with $ \operatorname{Re}h(z)= \operatorname{Re}h(z').$ 

Moreover, if we also have $\gamma_j\ne -\infty$ for some $j\geq 1$, then there exist a neighborhood $V$ of $\zeta_j$ and $C'>0$ such that
\begin{align}\label{zongxiang}
 |v(z)|\leq C'|v(z')|
\end{align}
for all $z, z'\in V\cap\mathbb{D}$ with $ \operatorname{Re}h(z)= \operatorname{Re}h(z').$ 
\end{lemma}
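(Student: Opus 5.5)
We work in the canonical model, writing $w=h(z)$ and $\tilde v(w)=v\circ h^{-1}(w)\,\mathrm{e}^{-\beta_0 w}=\exp\int_0^w\tilde g(\eta)\,\mathrm{d}\eta$ with $\tilde g=g\circ h^{-1}-\beta_0$, as in the proof of Proposition \ref{4zc}. Since $|\mathrm{e}^{\beta_0 w}|=\mathrm{e}^{\operatorname{Re}\beta_0\operatorname{Re}w-\operatorname{Im}\beta_0\operatorname{Im}w}$ and $\operatorname{Im}w$ stays in a bounded interval (Ω lies in a horizontal strip of width $\pi/\alpha_0$), the factor $\mathrm{e}^{\beta_0 w}$ has comparable modulus at any two points with equal real part. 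So it suffices to show that $|\tilde v(x+iy)|/|\tilde v(x+iy')|$ is bounded above and below, uniformly in $x\geq M$ and in $x+iy, x+iy'\in\Gamma_0(M)$. Equivalently, we need a uniform bound on $\bigl|\operatorname{Re}\int_{x+iy'}^{x+iy}\tilde g(\eta)\,\mathrm{d}\eta\bigr|$, where the path may be chosen inside $\Omega$.

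The key step repeats the Fubini argument that produced \eqref{3}, but between two arbitrary points on the same vertical line instead of between $x_0$ and $w_0$. Fix $\epsilon=1$ and choose $M$ so that \eqref{1} and \eqref{2} hold for $\operatorname{Re}w\geq M$. Integrating \eqref{1} over $y$ between $y'$ and $y$ and using Fubini yields some $t_0\in[0,1)$ with $\bigl|\operatorname{Re}\int_{x+t_0+iy'}^{x+t_0+iy}\tilde g\bigr|<\pi/\alpha_0$. Then we split
$$\int_{x+iy'}^{x+iy}=\int_{x+iy'}^{x+t_0+iy'}+\int_{x+t_0+iy'}^{x+t_0+iy}-\int_{x+iy}^{x+t_0+iy},$$
and bound the two horizontal pieces by \eqref{2}. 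This gives the bound $2C+\pi/\alpha_0$. The horizontal segments lie in Ω because Ω is invariant under $w\mapsto w+t$.

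The main obstacle is that the vertical segment from $x+iy'$ to $x+iy$, and its translates, must lie in Ω. This requires the vertical sections of $\Gamma_0$ to be intervals $(q_0(x),Q_0(x))$. We would use Remark \ref{remarkchang}: analyticity of $\mathrm{e}^{-\alpha_0 h}$ at $\zeta_0$ with nonzero derivative makes $h(U\cap\mathbb{D})$ near $+\infty$ the image of a half-disk under a logarithm-type conformal map. After shrinking $U$, its vertical sections are therefore connected, and the sets $\{x+iy\}$ for $x\geq M$ are intervals. If a section were not connected, we would instead integrate along a path inside $\Gamma_0$ obtained by translating to the right, and use \eqref{1} to control the real-direction drift. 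Finally, we take $U=h^{-1}(\Gamma_0(M))$ together with its closure neighborhood.

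For the second statement we reduce to the first, as in Proposition \ref{4zyg}. On the part of $\Gamma_j$ lying inside the strip $h(\Delta_j)$, apply the first part to the reversed flow $(\varphi_t^{-1})$ with the cocycle $1/(u_t\circ\varphi_t^{-1})$, whose Denjoy–Wolff point is $\zeta_j$. In the parts where $Q_j$ or $q_j$ is strictly monotone, use $|u_1|\approx\mathrm{e}^{-\operatorname{Re}\beta_j}$ on $\Gamma_j$ and push points forward by $\varphi_k$ into a compact rectangle $S$ on which $|v|$ is bounded above and below. Two points with the same real part are pushed by the same $k$, so their $|v|$-values differ only by the factor $\mathrm{e}^{\pm 2\epsilon k}$. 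To remove this $\epsilon$-loss, we would rerun the Fubini argument on $\Gamma_j$: the vertical extent there is bounded by $\max Q_j-\min q_j$, so \eqref{1}-type smallness on unit horizontal translates gives a uniform constant $C'$, exactly as above.
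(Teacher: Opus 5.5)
Your proof of the first statement is correct and is the paper's argument. The paper just invokes \eqref{3}, which compares $x_0+iy_0$ with the real point $x_0$ by the same Fubini trick that you run directly between two points of one vertical segment. Your petal step in the second statement also coincides with the paper's.

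There is a gap in the part of $\Gamma_j$ outside the petal, where $Q_j$ or $q_j$ is strictly monotone. Rerunning the Fubini argument ``exactly as above'' needs two inputs, not one. The first is the smallness \eqref{1}, which only concerns $u_1$. The second is the analogue of \eqref{2}: a bound $\bigl|\log|u_t(z)|\bigr|\leq C$ uniform in $t\in[0,1]$ and in $z$ near $\zeta_j$. This second input is what controls the two horizontal pieces from $x$ to $x+t_0$, whose real parts are, up to a bounded term, $\log|u_{t_0}|$ at the two endpoints.

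You only cite \eqref{1}-type smallness, and \eqref{2} does not come for free at $\zeta_j$. Near $\zeta_0$ it was supplied by Lemma \ref{commombd}, whose proof uses a forward-invariant neighborhood of $\zeta_0$ together with the cocycle identity. No neighborhood of the repelling point $\zeta_j$ is forward invariant. Continuity of each individual $u_t$ at $\zeta_j$ only gives bounds on a neighborhood that depends on $t$.

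Control of $u_1$ alone cannot suffice. On the strip $\{0<\operatorname{Im}w<1\}$ take $\log\tilde v(w)=(1-\mathrm{e}^{2\pi i w})^{-1}$. Then $\int_w^{w+1}\tilde g\equiv 0$, yet $\log|\tilde v(n+iy)|=(1-\mathrm{e}^{-2\pi y})^{-1}\to+\infty$ as $y\to0^+$ for every integer $n$. So $|\tilde v|$ is not comparable along vertical segments.

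The paper fills exactly this hole. When $Q_j$ is strictly increasing it introduces $D=\{z\in\mathbb{D}: Q_j(-\infty)<\operatorname{Im}h(z)<Q_j(-M-1)\}$. This set is invariant under every $\varphi_t$, and each $u_t$ is bounded and bounded away from $0$ on it. Lemma 2.1 of \cite{Wolf}, the same result used for Lemma \ref{commombd}, then gives $\sup_{t\in[0,1]}\bigl(\|u_t\|_{H^\infty(D)}+\|1/u_t\|_{H^\infty(D)}\bigr)<\infty$. Only with this in hand does the proof of \eqref{3} go through near $\zeta_j$.

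If you add such a uniform bound on all of $\Gamma_j$ near $\zeta_j$, your single Fubini pass over $\Gamma_j$ works. It also spares you from patching the petal and non-petal regions together.
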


\begin{proof}
The first  statement follows directly from inequality \eqref{3} in the proof of Proposition \ref{4zc}.

Now suppose $\gamma_j\ne -\infty$ for some $j\geq 1$. By considering $(u_t\circ \varphi_t^{-1})_{t\geq0}$ as a semicocycle for $(\varphi_t^{-1})_{t\geq 0}$ on the petal $\Delta_j$, we see that \eqref{zongxiang} holds whenever $z$ and $z'$ both lie in $V\cap\Delta_j$ for some neighborhood $V$ of $\zeta_j$. Hence the lemma is proved if both $q_j(x)$ and $Q_j(x)$ are constant near $-\infty$. 

Otherwise, assume that $Q_j(x)$ is strictly increasing near $-\infty$. Let 
$$D=\{z\in\mathbb{D} : Q_j(-\infty)<\operatorname{Im}h(z)<Q_j(-M-1)\},$$
where $M>0$ is taken large enough so that \eqref{4zhsz} holds whenever $z\in D$ and $\operatorname{Re}h(z)\leq -M$. 
Since $Q_j(x)$ is strictly increasing for $x\leq-M$, it is clear that $u_t\in(H^\infty(D))^{-1}$ for any $t\geq 0.$ So by Lemma 2.1 in [4],  both $\|u_t\|_{H^\infty(D)}$ and $\|1/u_t\|_{H^\infty(D)}$ are bounded for $t\in[0,1]$. Then, using an argument similar to the proof of \eqref{3}, one can see that \eqref{zongxiang} also holds on $V\cap D$ for some neighborhood $V$ of $\zeta_j$. The same reasoning applies when $q_j(x)$ is strictly decreasing.
\end{proof}

For the next two claims, we shall work on $\Omega=h(\mathbb{D})$ instead of the unit disk. Note that, according to Theorem \ref{ana}, a function $H(z)$ belongs to $L^p_{\zeta_j}(\mathbb{D})$ if and only if 
$$\int_{\Gamma_j}|H\circ h^{-1}(w)|^p \mathrm{e}^{-2\alpha_j  \operatorname{Re}w} \mathrm{d}V(w)<\infty,$$
where $\Gamma_j=\Gamma_j(M)$ for some $M>0$ large enough, which is defined as in Remark \ref{remarkchang}.

\begin{claim}\label{<j}
If $ \operatorname{Re}\lambda<\gamma_j$ for some $j\geq 1$, then for any $f\in A^p$,
$$\frac{ \mathrm{e}^{\lambda h(z)}}{v(z)}\left(K-\int_0^z\omega_{\lambda,f}\right)\notin A^p_{\zeta_j}$$
whenever $K\ne\int_0^{\zeta_j}\omega_{\lambda,f}$.

Moreover, if we also have $\beta_0\ne-\infty$, then 
$$\frac{ \mathrm{e}^{\lambda h(z)}}{v(z)}\int^z_{\zeta_j}\omega_{\lambda,f}\in A^p_{\zeta_j}.$$

\end{claim}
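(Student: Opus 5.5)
The claim has two parts, and I would treat them separately.

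\textbf{Uniqueness of $K$.} By Lemma \ref{zaibianyige}, $\int_{\zeta_j}^z\omega_{\lambda,f}$ is analytic on $\mathbb{D}$ when $\operatorname{Re}\lambda<\gamma_j$. So we may write
$$K-\int_0^z\omega_{\lambda,f}=\Bigl(K-\int_0^{\zeta_j}\omega_{\lambda,f}\Bigr)-\int_{\zeta_j}^z\omega_{\lambda,f}.$$
If the second assertion holds, then $\frac{\mathrm{e}^{\lambda h}}{v}\int_{\zeta_j}^z\omega_{\lambda,f}\in A^p_{\zeta_j}$. On the other hand, the proof of Theorem \ref{eigenvalue} gives $\frac{\mathrm{e}^{\lambda h}}{v}\notin A^p_{\zeta_j}$ for $\operatorname{Re}\lambda<\gamma_j$ (via Corollary \ref{zhishu} and Proposition \ref{4zyg}). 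Hence any $K\ne\int_0^{\zeta_j}\omega_{\lambda,f}$ fails.

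There is a gap: the first assertion is stated without assuming $\beta_0\neq-\infty$. In that case I would only need that $\frac{\mathrm{e}^{\lambda h}}{v}\int_{\zeta_j}^z\omega_{\lambda,f}$ is \emph{not worse} than $\frac{\mathrm{e}^{\lambda h}}{v}$ near $\zeta_j$. The local estimate below works inside $\Gamma_j$, and it never uses $\beta_0$. So I would prove the estimate near $\zeta_j$ in general; the hypothesis on $\beta_0$ is presumably only needed so that Proposition \ref{4zyg} and Lemma \ref{lastlemma} apply with $\gamma_0$ finite. I would check carefully which of these two lemmas actually uses $\gamma_0\ne-\infty$.

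\textbf{Membership in $A^p_{\zeta_j}$.} I would pass to $\Omega$ and use the criterion just before the claim: it suffices to show
$$\int_{\Gamma_j}\Bigl|\frac{\mathrm{e}^{\lambda w}}{V(w)}\int_{-\infty+i\operatorname{Im}w}^{w}\mathrm{e}^{-\lambda\eta}V(\eta)F(\eta)\,\mathrm{d}\eta\Bigr|^p\mathrm{e}^{-2\alpha_j\operatorname{Re}w}\,\mathrm{d}V(w)<\infty,$$
where $V=v\circ h^{-1}$ and $F=f\circ h^{-1}$. By Remark \ref{zuihouremark}, the path may be taken horizontal, from $-\infty+iy$ to $w=x+iy$; this requires the horizontal half-lines to stay in $\Gamma_j$. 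Where $Q_j$ is monotone, I would instead use horizontal segments reaching into the petal strip and then vertical pieces, and control those using Lemma \ref{lastlemma}.

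Along the horizontal path, Lemma \ref{lastlemma} together with Proposition \ref{4zyg} gives
$$|V(s+iy)|/|V(x+iy)|\le C\mathrm{e}^{(\operatorname{Re}\beta_j-\epsilon)(s-x)}\quad\text{for } s\le x.$$
(One could derive a cleaner two-sided comparison from the cocycle $u_{x-s}$ being close to $\mathrm{e}^{-\beta_j(x-s)}$ near $\zeta_j$.) The integrand then becomes
$$\int_{-\infty}^{x}\mathrm{e}^{(\operatorname{Re}\beta_j-\operatorname{Re}\lambda-\epsilon)(s-x)}|F(s+iy)|\,\mathrm{d}s,$$
which is a convolution in $s$ of $|F|$ against an exponential kernel on $(-\infty,0]$. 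I would weight it as
$$\mathrm{e}^{-2\alpha_j x/p}|F|=\bigl(\mathrm{e}^{-2\alpha_j x/p}|F|\bigr)\cdot 1.$$
Moving the weight inside produces a kernel $\mathrm{e}^{(\operatorname{Re}\beta_j+2\alpha_j/p-\operatorname{Re}\lambda-\epsilon)(s-x)}$. Since $\gamma_j-\operatorname{Re}\lambda>0$, choosing $\epsilon$ small makes this kernel integrable. Young's inequality in $x$ for each fixed $y$, followed by integration in $y$ over the bounded strip, then bounds the whole expression by
$$\int_{\Gamma_j}|F|^p\mathrm{e}^{-2\alpha_j\operatorname{Re}w}\,\mathrm{d}V\lesssim\|f\|_p^p.$$

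\textbf{Main obstacle.} I expect the hardest step to be the geometry of $\Gamma_j$ outside the petal, when $Q_j$ or $q_j$ is strictly monotone. There, horizontal leftward rays exit $\Omega$. I would handle those points by flowing: write $w=\tilde w+k$ with $\tilde w$ in a fixed compact rectangle, as in the proof of Proposition \ref{4zyg}, and reroute the path through the petal strip using the vertical comparability in Lemma \ref{lastlemma}. The extra vertical piece has bounded length, and its contribution is dominated by $|\mathrm{e}^{\lambda w}/V(w)|$ times the local sup of $|F|$. That sup would in turn be controlled by the subharmonic mean-value estimate, giving an $L^p$ bound by the same convolution argument.
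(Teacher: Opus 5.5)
Your overall plan is sound, and in the petal strip it follows a genuinely different route from the paper.

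\emph{How the paper argues.} It obtains \eqref{<j1} by viewing $(\varphi_t^{-1})$ as a flow on $\Delta_j$ with Denjoy--Wolff point $\zeta_j$ and cocycle $1/(u_t\circ\varphi_t^{-1})$, so that $\tilde\gamma_0=-\gamma_j<\operatorname{Re}(-\lambda)$. It then invokes Claim \ref{>0}, and hence ultimately the spectral radius bound of Theorem \ref{rad}. For the second assertion it selects a good line $\operatorname{Im}w=b$ by Fubini. It then splits the function into a piece along that line and a vertical piece, controlling both with Lemma \ref{lastlemma}.

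\emph{How your route differs.} You replace the appeal to Claim \ref{>0} by a direct, line-by-line estimate. On each horizontal line of the petal strip the function is a one-sided exponential convolution of $|F|$. After moving the weight $\mathrm{e}^{-2\alpha_j x/p}$ inside, Young's inequality applies with kernel exponent $\gamma_j-\operatorname{Re}\lambda-\epsilon>0$. This is more elementary and avoids Theorem \ref{rad} and the twisting of $v$. It is also purely local at $\zeta_j$, so it is transparent that the first assertion needs nothing about $\beta_0$. For that assertion the petal strip alone suffices, exactly as in the paper, so you do not need Lemma \ref{lastlemma}, which is stated under $\gamma_0\ne-\infty$. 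Outside the strip, your rerouting is the paper's $H_1+H_2$ decomposition.

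\emph{First repair: the horizontal comparison.} The bound $|V(s+iy)|/|V(x+iy)|\le C\mathrm{e}^{(\operatorname{Re}\beta_j-\epsilon)(s-x)}$ does not follow from Proposition \ref{4zyg} and Lemma \ref{lastlemma}. That proposition only traps $|V|$ between $\mathrm{e}^{(\operatorname{Re}\beta_j\pm\epsilon)x}$ (with the $\pm\epsilon$ read in the order that makes sense as $x\to-\infty$). The quotient therefore picks up a factor $\mathrm{e}^{2\epsilon|x|}$, and Lemma \ref{lastlemma} only compares points on the same vertical line. This factor cannot be absorbed, because $f\in A^p$ gives exactly $\int_{\Gamma_j}|F|^p\mathrm{e}^{-2\alpha_j\operatorname{Re}w}\,\mathrm{d}V<\infty$ with no room to spare.

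Your parenthetical remark has to become the actual argument. Write $V(x+iy)/V(s+iy)=u_{x-s}(h^{-1}(s+iy))$ and iterate $|u_1|\ge\mathrm{e}^{\operatorname{Re}\beta_j-\epsilon}$ along the segment, which stays in $h^{-1}(\Gamma_j)$. Add uniform two-sided bounds on $u_r$ for $r\in[0,1]$ near $\zeta_j$, the analogue of Lemma \ref{commombd}. Use the convention $|u_t(\zeta_j)|=\mathrm{e}^{t\operatorname{Re}\beta_j}$, under which $\gamma_j$, Theorem \ref{rad} and Proposition \ref{4zyg} are written. With $\mathrm{e}^{-\beta_j t}$, as in \eqref{valuebeta} and your parenthesis, the sign of $\operatorname{Re}\beta_j$ in your kernel flips.

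\emph{Second repair: the vertical piece.} Do not bound it by the supremum of $|F|$ on the segment combined with a subharmonic mean-value estimate. The vertical sections run up to $\partial\Omega$, where that estimate degenerates like $\operatorname{dist}(w,\partial\Omega)^{-2/p}$. Its $p$-th power is not integrable over a region adjacent to a boundary curve, so you would need a maximal-function theorem. Flowing to a compact rectangle controls $v$, not $f$.

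Keep the integral instead. By Lemma \ref{lastlemma} and the boundedness of $|\mathrm{e}^{\lambda(w-\eta)}|$ for vertical displacements, the vertical piece at $x+iy$ is at most $C\int_{q_j(x)}^{Q_j(x)}|F(x+it)|\,\mathrm{d}t$, independently of $y$. Apply H\"older on this bounded interval, then integrate in $y$ and in $x$ against $\mathrm{e}^{-2\alpha_j x}$; the result is $\lesssim\|f\|_p^p$.

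For the horizontal piece, choose $b$ in the petal strip with $\int_{-\infty}^{-M}\mathrm{e}^{-2\alpha_j s}|F(s+ib)|^p\,\mathrm{d}s<\infty$; almost every $b$ works, by Fubini. Your Young bound on that single line, combined with Lemma \ref{lastlemma}, then plays the role of \eqref{<j2}.
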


\begin{proof}
We may assume that $\gamma_j\ne-\infty.$ Let $(\tilde{\varphi}_t)=(\varphi_t^{-1})$, which is a flow on $\Delta_j$ with Denjoy-Wolff point $\tilde{\zeta}_0=\zeta_j$. Define
$$\tilde{u}_t=\frac{1}{u_t\circ\varphi_t^{-1}}=\frac{v\circ\tilde{\varphi}_t}{v}$$
for $t\geq 0$ on $\Delta_j$. Then $(\tilde{u}_t)$ is a semicocycle for $(\tilde{\varphi}_t)_{t\geq0}$ on $\Delta_j$. Note that $\tilde{\gamma}_0=-\gamma_j$, hence $\operatorname{Re}(-\lambda)>\tilde{\gamma}_0$. Because $\Delta_j$ is conformal with respect to $\mathbb{D}$ at $\zeta_j$, see Remark \ref{remarkchang}, so Claim \ref{>0} gives that
\begin{align}\label{<j1}
\frac{ \mathrm{e}^{\lambda h(z)}}{v(z)}\int^z_{\zeta_j}\omega_{\lambda,f}\in A^p_{\zeta_j}(\Delta_j).
\end{align}
On the other hand, the proof of Theorem \ref{eigenvalue} shows that $\frac{ \mathrm{e}^{\lambda h(z)}}{v(z)}\notin A^p_{\zeta_j}(\Delta_j)$. So when $K\ne\int_0^{\zeta_j}\omega_{\lambda,f}$, the function $\frac{ \mathrm{e}^{\lambda h(z)}}{v(z)}\left(K-\int_0^z\omega_{\lambda,f}\right)$ is not in $A^p_{\zeta_j}(\Delta_j)$, and therefore it does not belong to $A^p_{\zeta_j}$.

Now suppose additionally that  $\beta_0\ne-\infty$. Note that \eqref{<j1} means
$$\int_{\Gamma_j\cap\Delta_j}\left|\frac{ \mathrm{e}^{\lambda w}}{v\circ h^{-1}(w)}\int^{h^{-1}(w)}_{\zeta_j}\omega_{\lambda,f}\right|^p \mathrm{e}^{-2\alpha_j  \operatorname{Re} w} \mathrm{d}V(w)<\infty$$
for some $\Gamma_j=\Gamma_j(M)$ with $M>0$ large enough. Hence we can pick $b\in(q_j(-\infty),Q_j(-\infty))$ such that 
\begin{align}\label{<j2}
\int_{-\infty}^{-M}\left|\frac{ \mathrm{e}^{\lambda x}}{v\circ h^{-1}(x+ib)}\int^{h^{-1}(x+ib)}_{\zeta_j}\omega_{\lambda,f}\right|^p \mathrm{e}^{-2\alpha_j x} \mathrm{d}x<\infty.
\end{align}
Now, for any $z\in\Gamma_j$, write
$$\frac{ \mathrm{e}^{\lambda h(z)}}{v(z)}\int^z_{\zeta_j}\omega_{\lambda,f}=H_1(z)+H_2(z),$$
where
$$H_1(z)=\frac{ \mathrm{e}^{\lambda h(z)}}{v(z)}\int^{h^{-1}(\operatorname{Re}h(z)+ib)}_{\zeta_j}\omega_{\lambda,f}$$
and
$$H_2(z)=\frac{ \mathrm{e}^{\lambda h(z)}}{v(z)}\int^z_{h^{-1}(\operatorname{Re}h(z)+ib)}\omega_{\lambda,f}.$$
By Lemma \ref{lastlemma},
$$|H_1(z)|\leq C\left|\frac{ \mathrm{e}^{\lambda h(z)}}{v\circ h^{-1}(\operatorname{Re}h(z)+ib)}\int^{h^{-1}(\operatorname{Re}h(z)+ib)}_{\zeta_j}\omega_{\lambda,f}\right|.$$
An then \eqref{<j2} show that $H_1\in L^p_{\zeta_j}(\mathbb{D})$. Thus it remains to prove that $H_2\in L^p_{\zeta_j}(\mathbb{D})$.

To see this, for any $w=x+iy\in\Gamma_j$, by Lemma \ref{lastlemma},
\begin{align*}
|H_2\circ h^{-1}(w)|^p&\leq\int_{\min\{b,y\}}^{\max\{b,y\}}\left|\frac{v\circ h^{-1}(x+it)}{v\circ h^{-1}(x+iy)}\right|^p |f(x+it)|^p\mathrm{d}t\\
&\leq C\int_{\min\{b,y\}}^{\max\{b,y\}}|f(x+it)|^p\mathrm{d}t,
\end{align*}
Therefore,
\begin{align*}
&\int_{\Gamma_j}|H_2\circ h^{-1}(w)|^p \mathrm{e}^{-2\alpha_j  \operatorname{Re}w} \mathrm{d}V(w)\\
\leq&C\int_{-\infty}^{M}\mathrm{d}x\int_{q(x)}^{Q(x)}\mathrm{d}y\int_{\min\{b,y\}}^{\max\{b,y\}}|f(x+it)|^p\mathrm{e}^{-2\alpha_jx}\mathrm{d}t\\
\leq&C\Bigl(Q(M)-q(M)\Bigr)\cdot\int_{-\infty}^{M}\mathrm{d}x\int_{q(x)}^{Q(x)}|f(x+it)|^p\mathrm{e}^{-2\alpha_jx}\mathrm{d}t\\
\leq&C\Bigl(Q(M)-q(M)\Bigr)\cdot\|f\|_p^p<\infty.
\end{align*}
This shows that $H_2\in L^p_{\zeta_j}(\mathbb{D})$ as well.
\end{proof}

\begin{claim}\label{<0}
Suppose $\gamma_1\ne-\infty$. If $ \operatorname{Re}\lambda<\gamma_0$, then 
$$\frac{ \mathrm{e}^{\lambda h(z)}}{v(z)}\left(K-\int_0^{z}\omega_{\lambda,f}\right)\in A^p_{\zeta_0}$$
for any $f\in A^p$ and $K\in\mathbb{C}$. 
\end{claim}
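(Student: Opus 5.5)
Since $\operatorname{Re}\lambda<\gamma_0$ and $\gamma_1\ne-\infty$, the natural plan is to mirror Claim \ref{<j}, with the roles of $\zeta_0$ and $\zeta_j$ exchanged. Note that $\gamma_0\ne-\infty$ is automatic, since $\operatorname{Re}\lambda<\gamma_0$. By Theorem \ref{eigenvalue} (more precisely its proof), $\frac{\mathrm{e}^{\lambda h}}{v}\in A^p_{\zeta_0}$ when $\operatorname{Re}\lambda<\gamma_0$. Hence it suffices to show that
$$\frac{\mathrm{e}^{\lambda h(z)}}{v(z)}\int_{\zeta_1}^z\omega_{\lambda,f}\in A^p_{\zeta_0}$$
for one convenient base point, since changing $K$ adds only a multiple of $\frac{\mathrm{e}^{\lambda h}}{v}$. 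I would first treat $\operatorname{Re}\lambda<\gamma_1$, where Lemma \ref{zaibianyige} makes $\int_{\zeta_1}^z\omega_{\lambda,f}$ analytic on $\mathbb{D}$. If $\operatorname{Re}\lambda\geq\gamma_1$, I would instead modify $v$ to $\tilde v=v/(z-\zeta_1)^{c_1}$ so that $\tilde\gamma_1>\operatorname{Re}\lambda$ while $\tilde\gamma_0=\gamma_0$, and compare $v$ with $\tilde v$ near $\zeta_0$, where they differ by a bounded, nonvanishing factor. The factor $(z-\zeta_1)^{c_1}$ inside the integrand only alters the integral by a function of $f$ that is still controlled near $\zeta_0$. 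If this comparison turns out to be awkward, the alternative is to work directly with $\int_0^z$, splitting as below.

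The core estimate happens on $\Gamma_0=\Gamma_0(M)$ in the $h$-model, where membership in $L^p_{\zeta_0}$ means
$$\int_{\Gamma_0}|H\circ h^{-1}(w)|^p\mathrm{e}^{-2\alpha_0\operatorname{Re}w}\,\mathrm{d}V(w)<\infty.$$
I would fix $b\in(q_0(+\infty),Q_0(+\infty))$ and write $\int_{\zeta_1}^z=\int_{\zeta_1}^{h^{-1}(M+ib)}+\int_{h^{-1}(M+ib)}^{h^{-1}(x+ib)}+\int_{h^{-1}(x+ib)}^{z}$ with $x=\operatorname{Re}h(z)$. This gives three pieces: $H_0=c\,\mathrm{e}^{\lambda h}/v$, which lies in $A^p_{\zeta_0}$; a horizontal piece $H_1$; and a vertical piece $H_2$.

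The vertical piece $H_2$ is handled exactly as in Claim \ref{<j}. By the first part of Lemma \ref{lastlemma}, $|v(x+it)/v(x+iy)|\le C$. Moreover $|\mathrm{e}^{\lambda w}\mathrm{e}^{-\lambda(x+it)}|$ is bounded, since the strip has bounded width. Together these give
$$\int_{\Gamma_0}|H_2|^p\mathrm{e}^{-2\alpha_0x}\le C\,(Q_0-q_0)\|f\|_p^p.$$

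The horizontal piece $H_1$ is the main obstacle. Along the line $\operatorname{Im}w=b$ it equals
$$\frac{\mathrm{e}^{\lambda x}}{v(x+ib)}\int_M^x\mathrm{e}^{-\lambda s}v(s+ib)f(s+ib)\,\mathrm{d}s,$$
and by Lemma \ref{lastlemma} it is comparable across the whole vertical section. I would estimate it via Proposition \ref{4zc}: $|v(s+ib)/v(x+ib)|\le C\mathrm{e}^{-(\operatorname{Re}\beta_0-\epsilon)(x-s)}$. This yields a Hardy-type convolution $\int_M^x \mathrm{e}^{(\operatorname{Re}\lambda-\operatorname{Re}\beta_0+\epsilon)(x-s)}|f(s+ib)|\,\mathrm{d}s$, to be measured in $L^p(\mathrm{e}^{-2\alpha_0x}\mathrm{d}x)$. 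Substituting $F(s)=|f(s+ib)|\mathrm{e}^{-2\alpha_0 s/p}$ turns the kernel into $\mathrm{e}^{(\operatorname{Re}\lambda-\gamma_0+\epsilon')(x-s)}$, which is integrable when $\epsilon$ is small because $\operatorname{Re}\lambda<\gamma_0$. Young's inequality then bounds the norm by $\|F\|_{L^p}$. The point needing care is replacing the single line $\operatorname{Im}w=b$ by an average over $b$: I would average over $b$ in a subinterval via Fubini so that $\int\|F_b\|^p\,\mathrm{d}b\le\|f\|_p^p$. The $\epsilon$-losses of Proposition \ref{4zc} in the two-sided comparison $v(s)/v(x)$ are only multiplicative in $\mathrm{e}^{2\epsilon(x+s)}$, and controlling them cleanly may require the cocycle bound $|u_{x-s}|\le C\mathrm{e}^{(-\operatorname{Re}\beta_0+\epsilon)(x-s)}$ from Lemma \ref{commombd}. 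This is where I expect the technical work to lie.
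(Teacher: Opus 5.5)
Your proof is correct in outline, and it handles the horizontal piece by a genuinely different route from the paper. The paper uses the same three-way split: a multiple of $\mathrm{e}^{\lambda h}/v$, a horizontal piece $H_1$, and a vertical piece $H_2$. It also treats $H_2$ exactly as you do, via Lemma \ref{lastlemma}. The difference is in $H_1$:
\begin{itemize}
\item The paper runs the line $\operatorname{Im}w=b$ through the petal strip, $b\in(q_1(-\infty),Q_1(-\infty))$, so that it joins $\zeta_1$ to $\zeta_0$. It then controls $H_1$ indirectly. On a sub-petal $D$ it takes the reversed flow $(\varphi_t^{-1})$, for which $\zeta_0$ is the repelling fixed point, with cocycle $1/(u_t\circ\varphi_t^{-1})$ and rescaled data $\tilde f$, $\lambda'$. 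It then invokes Claim \ref{>j}, and hence ultimately Theorem \ref{rad}, to get $L^p$ control along that line.
\item You keep the line inside $\Gamma_0$ and bound $H_1$ directly as an exponential-kernel convolution via Young's inequality.
\end{itemize}
The paper's route recycles existing machinery through the attracting/repelling symmetry. The price is that it needs a petal reaching $\zeta_0$ and finite data at $\zeta_1$, which is exactly where $\gamma_1\neq-\infty$ enters. Your route is purely local at $\zeta_0$ and more elementary, and it never uses $\gamma_1\neq-\infty$. Relatedly, your first paragraph's case split is unnecessary. The base point only contributes a multiple of $\mathrm{e}^{\lambda h}/v\in A^p_{\zeta_0}$, so $\zeta_1$, Lemma \ref{zaibianyige} and the $\tilde v$ detour can all be dropped; just work with $\int_{h^{-1}(M+ib)}^z\omega_{\lambda,f}$.

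The one step that needs repair is the ratio bound. It does not follow from Proposition \ref{4zc}. Those two-sided bounds lose a factor $\mathrm{e}^{\epsilon(x+s)}$, so the kernel would no longer depend on $x-s$ alone and Young's inequality would not apply. The bound comes instead from the cocycle, in the normalization of Proposition \ref{4zc} where $|v|\approx|\mathrm{e}^{h}|^{\operatorname{Re}\beta_0}$:
\begin{itemize}
\item First, $v(h^{-1}(x+ib))/v(h^{-1}(s+ib))=u_{x-s}(h^{-1}(s+ib))$.
\item On the forward-invariant set $\{\operatorname{Re}h\geq M\}$, continuity of $u_1$ at $\zeta_0$ gives $|u_1|\geq\mathrm{e}^{\operatorname{Re}\beta_0-\epsilon}$.
\item Lemma \ref{commombd} gives $|u_r|\geq 1/C$ for $r\in[0,1]$.
\item Writing $u_{n+r}=u_r\circ\varphi_n\cdot\prod_{k<n}u_1\circ\varphi_k$ then yields $|u_{x-s}|\geq C^{-1}\mathrm{e}^{(\operatorname{Re}\beta_0-\epsilon)(x-s)}$.
\end{itemize}
This is a \emph{lower} bound on $|u_{x-s}|$; the upper bound you wrote points the wrong way, and Lemma \ref{commombd} alone only covers bounded times. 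With this bound your kernel is $\mathrm{e}^{-(\gamma_0-\operatorname{Re}\lambda-\epsilon)(x-s)}$ and Young's inequality applies. Averaging over $b$ then finishes the argument. Alternatively, simply pick one $b$ with $\int_M^\infty|f\circ h^{-1}(s+ib)|^p\mathrm{e}^{-2\alpha_0 s}\,\mathrm{d}s<\infty$, which Fubini provides, as the paper does in Claim \ref{<j}.
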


\begin{proof}
Let $D$ be the interior of
$$h^{-1}\left(i\left\lfloor \frac{3q_1(-\infty)+Q_1(-\infty)}{4}, \frac{q_1(-\infty)+3Q_1(-\infty)}{4}\right\rceil\right).$$
Then $(\tilde{\varphi}_t)=(\varphi^{-1}_t)_{t\geq0}$ gives a semiflow on $D$ with spectral value $\tilde{\alpha}_1=2\alpha_1$ at its only repelling fixed point $\tilde{\zeta}_1=\zeta_0$. Define
$$\tilde{u}_t=\frac{1}{u_t\circ\varphi_t^{-1}}=\frac{v\circ\tilde{\varphi}_t}{v}.$$
Then $(\tilde{u}_t)$ is a semicocycle for the $(\tilde{\varphi}_t)$ on $D$, and 
\begin{align*}
\tilde{\gamma}_1=\frac{4}{p}\alpha_1-\operatorname{Re}\beta_0=\frac{2}{p}(\alpha_0+2\alpha_1)-\gamma_0.
\end{align*}

Given $f\in A^p$, put $ \tilde{f}=f \cdot\mathrm{e}^{-\frac{2}{p}(\alpha_0+2\alpha_1)h}$. And let $\lambda'=\lambda-\frac{2}{p}(\alpha_0+2\alpha_1)$. Then we have $\operatorname{Re}(-\lambda')>\tilde{\gamma}_1$. So by Claim \ref{>j}, we can take $b\in(q_1(-\infty),Q_1(-\infty))$ such that 
\begin{align}\label{<01}
\int_{M}^{+\infty}\left|\frac{ \mathrm{e}^{\lambda' x}}{v\circ h^{-1}(x+ib)}\int^{x+ib}_{\zeta_j}\omega_{\lambda',\tilde{f}}\right|^p \mathrm{e}^{2\alpha_1 x} \mathrm{d}x<\infty.
\end{align}

Now write 
$$\frac{ \mathrm{e}^{\lambda h(z)}}{v(z)}\int_0^z\omega_{\lambda,f}=H_1(z)+H_2(z)+\frac{ \mathrm{e}^{\lambda h(z)}}{v(z)}\int_0^{h^{-1}(ib)}\omega_{\lambda,f},$$
where
$$H_1(z)=\frac{ \mathrm{e}^{\lambda h(z)}}{v(z)}\int^{h^{-1}(\operatorname{Re}h(z)+ib)}_{h^{-1}(ib)}\omega_{\lambda,f},$$
and
$$H_2(z)=\frac{ \mathrm{e}^{\lambda h(z)}}{v(z)}\int^z_{h^{-1}(\operatorname{Re}h(z)+ib)}\omega_{\lambda,f}.$$
Notice that $\omega_{\lambda',\tilde{f}}=\omega_{\lambda,f}$, so \eqref{<01} together with Lemma \ref{lastlemma} shows that $H_1\in L^p_{\zeta_j}(\mathbb{D})$. And an argument identical to the one used for $H_2$ in the proof of Claim \ref{<j} shows that we also have $H_2\in L^p_{\zeta_0}(\mathbb{D})$ here. Thus we have
$$\frac{ \mathrm{e}^{\lambda h(z)}}{v(z)}\int_0^{z}\omega_{\lambda,f}\in A^p_{\zeta_0}.$$
Finally, the proof of Theorem \ref{eigenvalue} tells that $\frac{ \mathrm{e}^{\lambda h(z)}}{v(z)}\in A^p_{\zeta_0}.$ Consequently,
$$\frac{ \mathrm{e}^{\lambda h(z)}}{v(z)}\left(K-\int_0^{z}\omega_{\lambda,f}\right)\in A^p_{\zeta_0}$$
for any constant $K\in\mathbb{C}$. 
\end{proof}

Now we are at the point to give final inductions of the two main theorems.  In view of \eqref{difsolu} and Claim \ref{others}, the operator $\lambda-A$ is a surjection from $D(A)$ onto $A^p$ if and only if for each $f\in A^p$ there exists a constant $K$ such that
$$\frac{ \mathrm{e}^{\lambda h(z)}}{v(z)}\left(K-\int_0^z \omega_{\lambda,f}\right)\in A^p_{\zeta_j}$$
at every fixed points $\zeta_j$ ($j\geq 0$) of $(\varphi_t)$.

\begin{proof}[\textbf{Proof of Theorem \ref{main}}]
We rearrange (i)-(iii) of Theorem \ref{main} into the following five cases:

\noindent\emph{ Case \uppercase\expandafter{\romannumeral1}: $\gamma_1=-\infty$.} By Corollary \ref{rad} and Theorem \ref{eigenvalue}, we have
$$\sigma(A)=\lfloor-\infty, \gamma_0\rceil$$
in this case.

\noindent\emph{ Case \uppercase\expandafter{\romannumeral2}: $\gamma_0>\gamma_1>-\infty$.} Again by Theorem \ref{eigenvalue}, the strip $\lfloor\gamma_1, \gamma_0\rceil$ is contained in $\sigma(A).$

Suppose $\gamma_2<\operatorname{Re}\lambda<\gamma_1$. Claim \ref{>j}, \ref{<j}, and \ref{<0} show that for any $f\in A^p$,
$$\frac{ \mathrm{e}^{\lambda h(z)}}{v(z)}\int_z^{\zeta_1} \omega_{\lambda,f}\in A^p.$$
Hence, by taking $K=\int_0^{\zeta_1} \omega_{\lambda,f}$ in \eqref{difsolu}, we see that $\lambda-A$ is surjective, so $\lambda\notin\sigma(A)$.

Now suppose $\operatorname{Re}\lambda<\gamma_2$. Fix an arbitrary $f\in A^p$. If we assume that there existed a constant $K$ such that
$$\frac{ \mathrm{e}^{\lambda h(z)}}{v(z)}\left(K-\int_0^z \omega_{\lambda,f}\right)\in A^p,$$
then applying Claim \ref{<j} at $\zeta_1$, we see that the only possible choice of $K$ is $K=\int_0^{\zeta_1} \omega_{\lambda,f}$. But if we consider at  $\zeta_2$, then for the same reason we must have $K=\int_0^{\zeta_2} \omega_{\lambda,f}.$ Therefore, we have 
\begin{align}\label{linghua}
\int_{\zeta_1}^{\zeta_2} \omega_{\lambda,f}=0
\end{align}
for every $f$ in the range of $\lambda-A$. However, equation \eqref{linghua} cannot hold for all $f\in A^p$. In fact,  since $A^p$ is invariant under automorphisms, we may assume, after composing with a suitable automorphism, that $\zeta_1=-1$ and $\zeta_2=1$. Then by Remark \ref{zuihouremark}, the integral in \eqref{linghua} can be taken along the real segment $[-1,1]$. So equation \eqref{linghua} gives
$$\int_{-1}^{1} \mathrm{e}^{-\lambda h(x)}v(x)h'(x)f(x)\mathrm{d}x=0,$$
for all $f\in A^p$. By taking $f$ as polynomials, we have $\mathrm{e}^{-\lambda h(x)}v(x)h'(x)$ is identity zero on $(-1,1)$, which is  impossible. So $\lambda-A$ is not surjective, and therefore $\lfloor-\infty, \gamma_2\rceil\subset\sigma(A)$. To sum up, we have 
$$\sigma(A)=\lfloor-\infty, \gamma_2\rceil\cup\lfloor\gamma_1, \gamma_0\rceil$$
in this case.

\noindent\emph{ Case \uppercase\expandafter{\romannumeral3}: $\gamma_2<\gamma_0<\gamma_1$.} As shown in Case \uppercase\expandafter{\romannumeral2}, we have $\lambda\in\sigma(A)$ when $\operatorname{Re}\lambda\leq\gamma_2$, and $\lambda\notin\sigma(A)$ when $\gamma_2<\operatorname{Re}\lambda<\gamma_0$.

Now suppose $\gamma_0<\operatorname{Re}\lambda<\gamma_1$. Assume that $f$ belongs to the range of $\lambda-A$. Then according to Claim \ref{>0} and \ref{<j}, we must have
$$K=\int_0^{\zeta_0} \omega_{\lambda,f}=\int_0^{\zeta_1} \omega_{\lambda,f}$$
in \eqref{difsolu}. Consequently,
$$\int_{\zeta_0}^{\zeta_1} \omega_{\lambda,f}=0$$
for any $f$ in the range of $\lambda-A$. The same argument as in Case \uppercase\expandafter{\romannumeral2} then shows that $\lambda-A$ is not surjective, hence $\lambda\in\sigma(A)$. Therefore, we have 
$$\sigma(A)=\lfloor-\infty, \gamma_2\rceil\cup\lfloor\gamma_0, \gamma_1\rceil$$
in this case.

\noindent\emph{ Case \uppercase\expandafter{\romannumeral4}: $\gamma_0\leq\gamma_2$.} As shown in Case \uppercase\expandafter{\romannumeral2}  and Case \uppercase\expandafter{\romannumeral3} , both $\lfloor-\infty,\gamma_2\rceil$ and $\lfloor\gamma_0,\gamma_1\rceil$ are contained in $\sigma(A)$, hence by Corollary \ref{spectralbound}, 
$$\sigma(A)=\lfloor-\infty,\gamma_1\rceil.$$

\noindent\emph{ Case \uppercase\expandafter{\romannumeral5}.} The only remaining situation is when $\gamma_0=\gamma_1>\gamma_2$. As shown in Case \uppercase\expandafter{\romannumeral2} we have $\lfloor-\infty,\gamma_2\rceil\subset\sigma(A)$. Hence it suffices to prove that the line $\{\lambda\in\mathbb{C} : \operatorname{Re}\lambda=\gamma_0 \}$ is contained in $\sigma(A)$. 

Assume, for contradiction, that there exists some $\lambda$ on this line with $\lambda\notin\sigma(A)$. Then $\lambda-A$ is injective, which means that $\frac{\mathrm{e}^{\lambda h(z)}}{v(z)}$ does not belong to $A^p$. However, the proof of Theorem \ref{eigenvalue} shows that $\frac{\mathrm{e}^{\lambda h(z)}}{v(z)}\in A^p_\zeta$ for every $\zeta\in \partial \mathbb{D}\backslash\{\zeta_0, \zeta_1\}$. So either  $\frac{\mathrm{e}^{\lambda h(z)}}{v(z)}\notin A^p_{\zeta_0}$ or $\frac{\mathrm{e}^{\lambda h(z)}}{v(z)}\notin A^p_{\zeta_1}$. We will show that both possibilities lead to a contradiction.

First assume that  $\frac{\mathrm{e}^{\lambda h(z)}}{v(z)}\notin A^p_{\zeta_0}$. Set $\tilde{v}(z)=\frac{v(z)}{(z-\zeta_1)^c}$ and $\tilde{u}_t=\frac{\tilde{v}\circ\varphi_t}{\tilde{v}}$. Choose the constant $c$ so that $\gamma_2<\tilde{\gamma_1}<\gamma_0$. Then by Case \uppercase\expandafter{\romannumeral2} we have $\lambda\in\sigma(\tilde{A})$, where $\tilde{A}$ denotes the generator of $(\tilde{u}_tC_{\varphi_t})$. Since $v$ and $\tilde{v}$ are equivalent near $\zeta_0$, we have $\frac{\mathrm{e}^{\lambda h(z)}}{v(z)}\notin A^p_{\zeta_0}$, thus $\lambda-\tilde{A}$ is injective and consequently $\lambda-\tilde{A}$ is not surjective. On the other hand, since $\lambda-A$ is surjective, for every $f\in A^p$ there exists a constant $K_0\in\mathbb{C}$ such that
$$\frac{ \mathrm{e}^{\lambda h(z)}}{v(z)}\left(K_0-\int_0^z \omega_{\lambda,f}\right)$$
belongs to $A^p$, and in particular it belongs to $A^p_{\zeta_0}$.  The equivalence of $v$ and $\tilde{v}$ near $\zeta_0$ then implies
$$\frac{ \mathrm{e}^{\lambda h(z)}}{\tilde{v}(z)}\left(K-\int_0^z \tilde{\omega}_{\lambda,f}\right)\in A_{\zeta_0}^p$$
for some $K\in\mathbb{C}$, where 
$$\tilde{\omega}_{\lambda,f}=\mathrm{e}^{-\lambda h(\xi)}h'(\xi)\tilde{v}(\xi)f(\xi) \mathrm{d}\xi.$$
Applying Claims \ref{others} and \ref{>j} we deduce that
$$\frac{ \mathrm{e}^{\lambda h(z)}}{\tilde{v}(z)}\left(K-\int_0^z \tilde{\omega}_{\lambda,f}\right)\in A^p.$$
But this means that $\lambda-\tilde{A}$ is surjective, which is a contradiction.

If instead we have $\frac{\mathrm{e}^{\lambda h(z)}}{v(z)}\notin A^p_{\zeta_1}$, the argument is essentially the same: simply define $\tilde{v}(z)=\frac{v(z)}{(z-\zeta_0)^c}$ with a suitable $c$ so that $\gamma_2<\gamma_1<\tilde{\gamma}_0$ and then repeat the reasoning.

Combining all five cases we complete the proof of Theorem \ref{main}.
\end{proof}

\begin{proof}[\textbf{Proof of Theorem \ref{ess}}]

First note that for every $\lambda\in\mathbb{C}$, the solution \eqref{difsolu} implies that the kernel of $\lambda-A$ has dimension less than or equals to $1$.

Assume that $\operatorname{Re}\lambda\ne\gamma_j$ for all $j\geq0$. Then according to the proof of Theorem \ref{main}, the range of $\lambda-A$  falls into one of the following three cases:
\begin{enumerate}
\item If $\gamma_0<\operatorname{Re}\lambda$, the range is
$$\mathcal{L}_1=\bigcap_{\gamma_j>\operatorname{Re}\lambda}\left\{f\in A^p : \int_{\gamma_0}^{\gamma_j}\omega_{\lambda,f}=0\right\}.$$ 

\item If $\gamma_0>\operatorname{Re}\lambda$ and $\gamma_1>\operatorname{Re}\lambda$,  the range is
$$\mathcal{L}_2=\bigcap_{\substack{\gamma_j>\operatorname{Re}\lambda\\ j\geq 2}}\left\{f\in A^p : \int_{\gamma_1}^{\gamma_j}\omega_{\lambda,f}=0\right\}.$$ 

\item If  $\gamma_1<\operatorname{Re}\lambda<\gamma_0$, then $\lambda-A$ is surjective.
\end{enumerate}
The proofs of Lemma \ref{jifenshoulian} and \ref{zaibianyige} actually show that when $\gamma_0<\operatorname{Re}\lambda<\gamma_j$, the map
$$f\mapsto \int_{\gamma_0}^{\gamma_j}\omega_{\lambda,f}$$
defines a bounded linear functional on $A^p$. Consequently, $\mathcal{L}_1$ is the intersection of the kernels of finitely many bounded linear functionals, hence it is a closed subspace of $A^p$ with finite codimension. The same for $\mathcal{L}_2$. So $\lambda\notin\sigma_e(A)$ whenever $\operatorname{Re}\lambda\ne\gamma_j$ for all $j\geq0$.

Now suppose $\operatorname{Re}\lambda=\gamma_0$. Let 
$$\tilde{v}(z)=\frac{v(z)}{\Pi_{k\geq 1}(z-\zeta_k)^{c_k}},$$
and put $\tilde{u}_t=\frac{\tilde{v}\circ\varphi_t}{\tilde{v}}$. Take $\{c_k\}$ so that $\tilde{\gamma}_1<\gamma_0$. Let $\tilde{A}$ be the generator of $(\tilde{u}_tC_{\varphi_t})$. If $f\notin \mathrm{Ran}(\lambda-\tilde{A})$, then
$$\frac{ \mathrm{e}^{\lambda h(z)}}{\tilde{v}(z)}\left(K-\int_0^z \tilde{\omega}_{\lambda,f}\right)\notin A^p_{\zeta_0}$$
for any $K\in\mathbb{C}$, where $\tilde{\omega}_{\lambda,f}=\mathrm{e}^{-\lambda h(\xi)}h'(\xi)\tilde{v}(\xi)f(\xi) \mathrm{d}\xi.$ So
$$\frac{ \mathrm{e}^{\lambda h(z)}}{v(z)}\left(K-\int_0^z \omega_{\lambda,f}\right)\notin A^p_{\zeta_0}$$
for any $K\in\mathbb{C}$, thus $f\notin \mathrm{Ran}(\lambda-A)$. Therefore, we have $ \mathrm{Ran}(\lambda-A)\subset \mathrm{Ran}(\lambda-\tilde{A}).$ Assume that $\lambda\notin \sigma_e(A)$, then $\mathrm{Ran}(\lambda-A)$ is a closed subspace of $A^p$ with finite codimension. Then so is $\mathrm{Ran}(\lambda-\tilde{A})$. However, by Theorem \ref{main},
$$\sigma(\tilde{A})=\lfloor-\infty, \tilde{\gamma}_2\rceil\cup\lfloor \tilde{\gamma}_1,\gamma_0\rceil,$$
so $\lambda$ is a accumulation point of both the resolvent set and the spectrum of $A$. Thus $\lambda\in\sigma_e(\tilde{A})$ (see Theorem I.3.25 in \cite{book}), which is a contradiction.

Finally, if $\operatorname{Re}\lambda=\gamma_j$ for some $j\geq1$, then let  
$$\tilde{v}(z)=\frac{v(z)}{\Pi_{k\ne j}(z-\zeta_k)^{c_k}}$$ 
such that 
$$\tilde{\gamma}_2<\tilde{\gamma}_1=\gamma_j<\tilde{\gamma}_0.$$
Then we also have $ \mathrm{Ran}(\lambda-A)\subset \mathrm{Ran}(\lambda-\tilde{A}).$ By Theorem \ref{main}, $\lambda$ is the accumulation point of both the  resolvent set and the spectrum of $A$, so we have $\lambda\in\sigma_e(\tilde{A})$. Hence $\lambda\in\sigma_e(A)$.
\end{proof}

\section{Further remarks}\label{further}

We conclude this paper with some further remarks and open questions concerning our results.

First of all, as promised in Section \ref{sec2}, we give an example to show that behavior of $(u_tC_{\varphi_t})$ at non-fixed self-contact points of $(\varphi_t)$ can affect its spectral property.

\begin{example}\label{yigeyue}
 let $E=\{\pi/2^j : j=1, 2, \dots\}$ and 
$$\Omega_0=\{w\in\mathbb{C} : 0<\operatorname{Im}w<\pi\}\backslash \bigl((-\infty, -1]\times E\bigr).$$
For any fixed constant $d>0$, we can take real numbers $L_j\to-\infty$ such that
\begin{align}\label{gouzaobu}
\int_{(L_j, 0)\times(\pi/2^j, \pi/2^{j-1})}\left|\frac{1}{\mathrm{e}^{dw}+1}\right|^p\mathrm{d}v(w)\geq1
\end{align}
for every $j\geq 1$. Then consider the domain
$$\Omega=\Omega_0\big\backslash\cup_{j=1}^\infty(-\infty, L_j]\times(\pi/2^j, \pi/2^{j-1}).$$
Take $h$ to be a biholomorphic map from $\mathbb{D}$ onto $\Omega$ with $h(0)=0$, and define $(\varphi_t)$ to be the hyperbolic semiflow on $\mathbb{D}$ whose canonical model is $(\Omega, h)$.  Note that the only fixed point of $(\varphi_t)$ is its Denjoy-Wolff point, denoted by $\zeta_0$, with spectral value $\alpha_0=1$. Meanwhile $(\tilde{\varphi}_t)$ has one non-fixed self-contact point, denoted by $\eta$.

Now let
$$v(z)=G(z)^{\frac{2}{p}}\cdot\big(\mathrm{e}^{-dh(z)}+1\big),$$
where $G$ is the generator of $(\varphi_t)$, and define
$$u_t=\frac{v\circ\varphi_t}{v}=\left(\varphi_t'\right)^{\frac{2}{p}}\cdot\frac{\mathrm{e}^{dh}+\mathrm{e}^{-dt}}{\mathrm{e}^{dh}+1}.$$ 
Then $\beta_0=-2/p$, and hence $\gamma_0=0$.

Recall that $G=1/h'$, therefore
$$\int_{\mathbb{D}}\left|\frac{\mathrm{e}^{\lambda h(z)}}{v(z)}\right|^p\mathrm{d}v(z)=\int_\Omega\left|\frac{\mathrm{e}^{(\lambda +d)w}}{\mathrm{e}^{dw}+1}\right|^p\mathrm{d}v(w).$$ 
Thus by \eqref{gouzaobu}, when $\operatorname{Re}\lambda<-d$, one has $\frac{\mathrm{e}^{\lambda h}}{v}\notin A^p$. This means that $\lambda\notin\sigma_p(A)$ when $\operatorname{Re}\lambda<-d$.   On the other hand, if $-d<\operatorname{Re}\lambda<0$, we clearly have
$$\int_{\mathbb{D}}\left|\frac{\mathrm{e}^{\lambda h(z)}}{v(z)}\right|^p\mathrm{d}v(z)=\int_\Omega\left|\frac{\mathrm{e}^{(\lambda +d)w}}{\mathrm{e}^{dw}+1}\right|^p\mathrm{d}v(w)<\infty.$$
Therefore, for the generator $A$ of $(u_tC_{\varphi_t})$ we have
$$\overline{\sigma_p(A)}=\left\lfloor-d, 0\right\rceil\subsetneqq\left\lfloor-\infty, \gamma_0\right\rceil.$$

\end{example}

In this example, the semiflow $(\varphi_t)$ is analytic at its only fixed point $\zeta_0$, and $(u_t)$ is continuous at $\zeta_0$. However, the point spectrum of generator of $(u_tC_{\varphi_t})$ does \textbf{not} follow the description given in Theorem \ref{eigenvalue}. The only reason is that $\frac{\mathrm{e}^{\lambda \tilde{h}}}{\tilde{v}}$ fails to be integrable near the non-fixed self-contact point $\eta$ of $(\varphi_t)$. So this example dmonstrates that the spectra of hyperbolic weighted composition semigroups are influenced by their behavior near non‑fixed self‑contact points. Hence, we can ask the following question:
\\

\noindent\textbf{Question1.} How to describe the spectrum of $(u_tC_{\varphi_t})$ when $(\varphi_t)$ is not analytic at its self-contact points?
\\

Another issue of interest is the relation between the spectrum of the generator of a semigroup and the spectra of the individual operators in the semigroup. Following Section 3 of Chapter \uppercase\expandafter{\romannumeral4} in \cite{equ}, we say that a $C_0$-semigroup $(T_t)$ satisfies the \emph{spectral mapping theorem} (SMT) if
$$\sigma(T_t)\backslash\{0\} =  \mathrm{e}^{t\sigma(A)},$$
where $A$ denote the generator of $(T_t)$. And say that it satisfies the \emph{weak spectral mapping theorem} (WSMT) if
$$\sigma(T_t)\backslash\{0\} =\overline{  \mathrm{e}^{t\sigma(A)}}\backslash\{0\}.$$
While SMT or WSMT holds for many  $C_0$-semigroups, it fails for others. 

Theorem \ref{main} provides a description of the spectrum of the generator $A$ for a hyperbolic weighted composition $C_0$-semigroup $(u_tC_{\varphi_t})$. Then in view of the inclusion \eqref{SMT0},  when $\gamma_2\geq\min\{\gamma_0,\gamma_1\}$, the spectrum of $u_tC_{\varphi_t}$ is given by the following theorem. 

\begin{theorem}
Let $(u_tC_{\varphi_t})$ be a $C_0$-semigroup on $A^p$, where $(\varphi_t)$ is a hyperbolic semiflow. Suppose $(\varphi_t)$ is analytic at its self-contact points, and $(u_t)$ is continuous at these points. If  $\gamma_2\geq\gamma_0$ or $\gamma_2=\gamma_1$, then the spectrum of $u_tC_{\varphi_t}$ is the disk 
$$\bigl\{\lambda\in\mathbb{C} : |\lambda|\leq\max_{j\geq0}\mathrm{e}^{\gamma_jt}\bigr\}$$
for every $t>0$. 
\end{theorem}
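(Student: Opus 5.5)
The plan is to squeeze $\sigma(u_tC_{\varphi_t})$ between two disks of the same radius. Write $R_t=\max_{j\geq0}\mathrm{e}^{\gamma_jt}$.

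For the upper bound, Theorem \ref{rad} gives $r(u_tC_{\varphi_t})\leq R_t$. Hence $\sigma(u_tC_{\varphi_t})$ lies in the closed disk of radius $R_t$.

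For the lower bound, I will use the inclusion \eqref{SMT0}, $\sigma(T_t)\supset\mathrm{e}^{t\sigma(A)}$, together with the fact that $\sigma(u_tC_{\varphi_t})$ is closed. It then suffices to show that $\overline{\mathrm{e}^{t\sigma(A)}}$ is the full closed disk of radius $R_t$. By Theorem \ref{main}, it is enough to check that $\sigma(A)$ contains a left half-plane $\lfloor-\infty,\gamma^*\rceil$ with $\gamma^*=\max_j\gamma_j$. Indeed, $\lambda\mapsto\mathrm{e}^{t\lambda}$ maps $\lfloor-\infty,\gamma^*\rceil$ onto the punctured disk $\{0<|\mu|\leq\mathrm{e}^{t\gamma^*}\}$, since the imaginary part sweeps all arguments. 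The closure then adds $0$, and $\mathrm{e}^{t\gamma^*}=R_t$.

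The real work is the case analysis, done by going through the three cases of Theorem \ref{main}. Recall $\gamma_1\geq\gamma_2\geq\cdots$.

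\emph{Case $\gamma_2\geq\gamma_0$.} Here $\gamma_0\leq\gamma_2$, so case (iii) of Theorem \ref{main} gives $\sigma(A)=\lfloor-\infty,\gamma_1\rceil$. Moreover $\gamma_1\geq\gamma_2\geq\gamma_0$ gives $\gamma^*=\gamma_1$, so this case is done.

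\emph{Case $\gamma_2=\gamma_1<\gamma_0$.} Then $\gamma_0\geq\gamma_1$, and case (i) gives $\sigma(A)=\lfloor-\infty,\gamma_2\rceil\cup\lfloor\gamma_1,\gamma_0\rceil=\lfloor-\infty,\gamma_0\rceil$ because $\gamma_1=\gamma_2$. Here $\gamma^*=\gamma_0$.

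\emph{Case $\gamma_2=\gamma_1\geq\gamma_0$.} This is already covered by the first case.

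Two degenerate points need separate treatment. The first is when some $\gamma_j=-\infty$. If $\gamma_1=\gamma_2=-\infty$, Case I of the proof of Theorem \ref{main} gives $\sigma(A)=\lfloor-\infty,\gamma_0\rceil$. If $\gamma_0=-\infty$ as well, all $\gamma_j=-\infty$; then $R_t=0$ and $\sigma(A)=\emptyset$. Since the spectrum of a bounded operator on a nonzero Banach space is nonempty and $r=0$, we get $\sigma(u_tC_{\varphi_t})=\{0\}$, which is the degenerate disk. The second point is checking that the union of strips genuinely fills the half-plane, with no gap; this is immediate from $\gamma_1=\gamma_2$.

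I expect the main obstacle to be only bookkeeping: matching the hypotheses $\gamma_2\geq\gamma_0$ or $\gamma_2=\gamma_1$ to the cases of Theorem \ref{main}, including the boundary equalities and $-\infty$ conventions. No new analytic estimate is needed beyond Theorem \ref{rad}, Theorem \ref{main}, and \eqref{SMT0}.
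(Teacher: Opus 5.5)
Your proposal is correct and is essentially the paper's own argument, which the paper compresses into the remark preceding the theorem: the upper bound comes from Theorem \ref{rad}, and the lower bound from the inclusion \eqref{SMT0} applied to the left half-plane $\sigma(A)$ provided by Theorem \ref{main}, followed by taking closures. Your case bookkeeping also matches the paper's condition $\gamma_2\geq\min\{\gamma_0,\gamma_1\}$, and it spells out the $-\infty$ conventions, including the degenerate case where all $\gamma_j=-\infty$ and $\sigma(u_tC_{\varphi_t})=\{0\}$, which the paper leaves implicit.
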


Otherwise, if $\gamma_2<\min\{\gamma_0,\gamma_1\}$, we have
$$\sigma(u_tC_{\varphi_t})\supset\bigl\{\lambda : |\lambda|\leq\max_{j\geq0}\mathrm{e}^{\gamma_jt}\bigr\}\backslash\bigl\{\lambda : \mathrm{e}^{\gamma_2t}<|\lambda|<\min\{\mathrm{e}^{\gamma_0t},\mathrm{e}^{\gamma_1t}\}\bigr\}.$$
It is therefore natural to ask whether the intermediate annulus
$$\bigl\{\lambda : \mathrm{e}^{\gamma_2t}<|\lambda|<\min\{\mathrm{e}^{\gamma_0t},\mathrm{e}^{\gamma_1t}\}\bigr\}$$
is contained in the spectrum of $u_tC_{\varphi_t}$ for  $t>0$ in this case. That is, the following question:
\\

\noindent\textbf{Question2.} Is the spectrum of $u_tC_{\varphi_t}$ always the disk $\bigl\{\lambda\in\mathbb{C} : |\lambda|\leq\max_{j\geq0}\mathrm{e}^{\gamma_jt}\bigr\}?$
\\

If the answer is affirmative, it would provide an explicit example of a $C_0$-semigroup of weighted composition operators does not satisfy the SMT, nor even WSMT.
\\

Turning to eigenvalues, the spectral mapping theorem holds for the point spectra of all $C_0$-semigroups, see, e.g., p.277 of \cite{equ}. Consequently,
$$\sigma_p(u_tC_{\varphi_t})\backslash\{0\} =  \mathrm{e}^{t\sigma_p(A)}.$$
Since $0\notin \sigma_p(u_tC_{\varphi_t})$ for any $t\geq0$, we immediately obtain a description of the point spectra of the operators $u_tC_{\varphi_t}$ as a corollary of Theorem \ref{eigenvalue}.

\begin{corollary}
Let $(u_tC_{\varphi_t})$ is a $C_0$-semigroup on $A^p$, where $(\varphi_t)$ is a hyperbolic semiflow. Supose $(\varphi_t)$ is analytic at its self-contact points, and $(u_t)$ is continuous at these points.
 \begin{enumerate}
\item If $\gamma_1<\gamma_0$, then for any $t\geq0$,
$$\overline{\sigma_p(u_tC_{\varphi_t})}=\{\lambda\in\mathbb{C} :  \mathrm{e}^{\gamma_1t}\leq|\lambda|\leq \mathrm{e}^{\gamma_0t}\},$$
and
$$\sigma_p(u_tC_{\varphi_t})\supset\{\lambda\in\mathbb{C} :  \mathrm{e}^{\gamma_1t}<|\lambda|<\mathrm{e}^{\gamma_0t}\}.$$

\item If $\gamma_1>\gamma_0$, then for any $t>0$, we have $\sigma_p(u_tC_{\varphi_t})=\emptyset$.
\end{enumerate}
\end{corollary}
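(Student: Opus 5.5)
The plan is to combine the spectral mapping theorem for point spectra of $C_0$-semigroups (p.~277 of \cite{equ}) with Theorem \ref{eigenvalue}. For $t>0$ that theorem gives
$$\sigma_p(u_tC_{\varphi_t})\backslash\{0\}=\mathrm{e}^{t\sigma_p(A)},$$
so the work reduces to two things: checking that $0$ is never an eigenvalue, and then transporting the description of $\sigma_p(A)$ through the map $\lambda\mapsto \mathrm{e}^{t\lambda}$.

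\emph{Step 1: $0\notin\sigma_p(u_tC_{\varphi_t})$.} Write $u_t=v\circ\varphi_t/v$, where $v$ is the semicoboundary in \eqref{boundary}. Since $v$ is an exponential, it never vanishes, and hence neither does $u_t$. So $u_t\,f\circ\varphi_t=0$ forces $f\circ\varphi_t\equiv0$. Because $\varphi_t$ is a nonconstant analytic self-map (it is univalent, since $h\circ\varphi_t=h+t$), the set $\varphi_t(\mathbb{D})$ is open and $f$ vanishes on it. By the identity theorem, $f\equiv0$. Consequently $\sigma_p(u_tC_{\varphi_t})=\mathrm{e}^{t\sigma_p(A)}$ exactly.

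\emph{Step 2: case (ii).} If $\gamma_1>\gamma_0$, Theorem \ref{eigenvalue}(ii) gives $\sigma_p(A)=\emptyset$, so $\sigma_p(u_tC_{\varphi_t})=\emptyset$ for every $t>0$.

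\emph{Step 3: case (i).} If $\gamma_1<\gamma_0$, let $S$ be the open strip (or half-plane) $\{\gamma_1<\operatorname{Re}\lambda<\gamma_0\}$. Theorem \ref{eigenvalue}(i) gives
$$S\subset\sigma_p(A)\subset\lfloor\gamma_1,\gamma_0\rceil.$$
For $t>0$ the map $\lambda\mapsto\mathrm{e}^{t\lambda}$ sends $S$ onto the open annulus $\{\mathrm{e}^{\gamma_1t}<|\lambda|<\mathrm{e}^{\gamma_0t}\}$; this is a punctured disk when $\gamma_1=-\infty$. It sends the closed strip into the closed annulus $\{\mathrm{e}^{\gamma_1t}\le|\lambda|\le\mathrm{e}^{\gamma_0t}\}$, where we read this as $\{0<|\lambda|\le \mathrm{e}^{\gamma_0 t}\}$ when $\gamma_1=-\infty$. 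This yields the stated inclusion for $\sigma_p(u_tC_{\varphi_t})$. It also shows that $\sigma_p(u_tC_{\varphi_t})$ is sandwiched between the open annulus and the closed one, so its closure is the closed annulus; when $\gamma_1=-\infty$ this closure adds back the point $0$, matching $\mathrm{e}^{\gamma_1t}=0$.

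The case $t=0$ is degenerate, since the operator is the identity. I would simply note that the formula is intended for $t>0$ (or holds trivially as $\{1\}$ when $\gamma_1,\gamma_0$ are finite). No step is a real obstacle. The only point needing care is the bookkeeping in Step 3 when $\gamma_1=-\infty$: there the point $0$ lies in the closure but not in the point spectrum, consistent with Step 1.
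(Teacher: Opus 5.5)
Your argument is the paper's: the point-spectral mapping theorem $\sigma_p(u_tC_{\varphi_t})\setminus\{0\}=\mathrm{e}^{t\sigma_p(A)}$, combined with $0\notin\sigma_p(u_tC_{\varphi_t})$ and Theorem \ref{eigenvalue}, where your Step 1 supplies the justification of $0\notin\sigma_p$ that the paper only asserts. One small correction: at $t=0$ the closure formula does not ``hold trivially as $\{1\}$'', because the right-hand side is the whole unit circle while $\sigma_p(I)=\{1\}$, so restricting (i) to $t>0$ is genuinely necessary and not merely a convention.
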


\end{document}